\numberwithin{equation}{section}
\theoremstyle{plain}
\newtheorem{theorem}{Theorem}[section]
\newtheorem{lemma}[theorem]{Lemma}
\newtheorem{corollary}[theorem]{Corollary}
\newtheorem{claim}[theorem]{Claim}
\newtheorem{proposition}[theorem]{Proposition}
\theoremstyle{definition}
\newtheorem{Def}[theorem]{Definition}
\newtheorem{example}[theorem]{Example}
\newtheorem{conj}[theorem]{Conjecture}
\newtheorem{remark}[theorem]{Remark}
\newtheorem{?}[theorem]{Problem}
\def\Z{\mathbb{Z}}
\def\R{\mathbb{R}}
\def\S{\mathfrak{S}}
\def\D{\mathfrak{D}}
\def\A{\mathfrak{A}}
\def\a{\mathfrak{a}}
\def\exc{\mathsf{exc}}
\def\ai{\mathsf{adi}}
\def\da{\mathsf{da}}
\def\dd{\mathsf{dd}}
\def\des{\mathsf{des}}
\def\asc{\mathsf{asc}}
\def\inv{\mathsf{inv}}
\def\fix{\mathsf{fix}}
\def\fmax{\mathsf{fmax}}
\def\nest{\mathsf{nest}}
\def\cros{\mathsf{cros}}
\def\icr{\mathsf{icr}}
\def\ine{\mathsf{ine}}
\def\wex{\mathsf{wex}}
\def\MAD{\mathsf{MAD}}
\def\valley{\mathsf{valley}}
\def\cvalley{\mathsf{cvalley}}
\def\drop{\mathsf{drop}}
\def\peak{\mathsf{peak}}
\def\cda{\mathsf{cda}}
\def\cdd{\mathsf{cdd}}
\def\Orb{\mathsf{Orb}}
\def\cab{31\text{-}2}
\def\bca{2\text{-}31}
\def\acb{13\text{-}2}
\def\bac{2\text{-}13}
\def\st{\textrm{st}}
\def\fl{\mathsf{fl}}
\def\stat{\mathsf{stat}}
\def\Dyck{\text{Dyck}}
\begin{document}
\title[Gamma expansions, pattern avoidance and the $(-1)$-phenomenon]{$(q,t)$-Catalan numbers:
gamma expansions, pattern avoidance and the $(-1)$-phenomenon}

\author[S. Fu]{Shishuo Fu}
\address[Shishuo Fu]{College of Mathematics and Statistics, Chongqing University, Huxi campus LD506, Chongqing 401331, P.R. China}
\email{fsshuo@cqu.edu.cn}

\author[D. Tang]{Dazhao Tang}

\address[Dazhao Tang]{College of Mathematics and Statistics, Chongqing University, Huxi campus LD206, Chongqing 401331, P.R. China}
\email{dazhaotang@sina.com}

\author[B. Han]{Bin Han}

\address[Bin Han]{Univ Lyon, Universit\'e Claude Bernard Lyon 1, CNRS UMR 5208, Institut Camille Jordan, 43 blvd. du 11 novembre 1918, F-69622 Villeurbanne cedex, France}
\email{han@math.univ-lyon1.fr}

\author[J. Zeng]{Jiang Zeng}

\address[Jiang Zeng]{Univ Lyon, Universit\'e Claude Bernard Lyon 1, CNRS UMR 5208, Institut Camille Jordan, 43 blvd. du 11 novembre 1918, F-69622 Villeurbanne cedex, France}
\email{zeng@math.univ-lyon1.fr}

\date{\today}

\begin{abstract}
The aim of this paper is two-fold. We first prove several new interpretations of a kind of $(q,t)$-Catalan numbers along with their corresponding $\gamma$-expansions using pattern avoiding permutations. Secondly, we give a complete characterization of certain $(-1)$-phenomenon for each subset of permutations avoiding a single pattern of length three, and discuss their $q$-analogues utilizing the newly obtained $q$-$\gamma$-expansions, as well as the continued fraction of a quint-variate generating function due to Shin and the fourth author. Moreover, we enumerate the alternating permutations avoiding simultaneously two patterns, namely $(2413,3142)$ and $(1342,2431)$, of length four, and consider such $(-1)$-phenomenon for these two subsets as well.
\end{abstract}

\subjclass[2010]{05A05, 05A15, 05A19}

\keywords{Catalan numbers;
Gamma expansions; Narayana polynomials; $(-1)$-phenomenon; alternating permutations; pattern avoidance.}

\maketitle

\tableofcontents

\section{Introduction}\label{sec1: Intro}
%

One of the simple and fertile characterizations of \emph{Catalan numbers} $C_n=\frac{1}{n+1}{2n\choose n}$ is  the following Stieltjes continued fraction expansion (cf. \cite{BCS08,Fl80})
\begin{align*}
\sum_{n=0}^\infty C_n z^n=
\cfrac{1}{
1-\cfrac{z}{1-\cfrac{z}{
\ddots}}}.
\end{align*}
 In this paper we
define  the  $(q,t)$-Catalan  numbers $C_n(t,q)$ as the Taylor coefficients in the following continued fraction expansion
\begin{equation}\label{def-q-nara}
\sum_{n=0}^\infty C_n(t,q) z^n=
\cfrac{1}{
1-\cfrac{z}{1-\cfrac{tz}{
\cfrac{\ddots}{1-\cfrac{q^{k-1}z}{1-\cfrac{tq^{k-1}z}{\ddots}
}}}}}.
\end{equation}
When we take $q=1$ in \eqref{def-q-nara}, the right-hand side reduces to the continued fraction expansion for the generating function of the \emph{Narayana polynomials} $C_n(t,1)$, see \cite{BCS08}, and for $t=1$ we recover the classical $q$-Catalan numbers of Carlitz-Riordan~\cite{CR64}. Blanco and Petersen~\cite{BP14} considered a related $(q,t)$-analog of Catalan numbers $\Dyck(n; t,q)$.
Indeed, by comparing the continued fraction~\eqref{def-q-nara} with that in~\cite[Proposition~2.6]{BP14},  we have
 \begin{align}\label{comb2-q-nara}
\Dyck(n; t,q)=C_n(tq,q^2).
\end{align}
Choosing continued fraction as the definition for $C_n(t,q)$ will facilitate us later in some generatingfunctional proofs. More importantly, the combinatorial constructions behind the proof of Theorem~\ref{qnara} originated from the fundamental works of Flajolet~\cite{Fl80} for the lattice path interpretation of the formal continued fractions, and two bijections between sets of certain weighted  \emph{Motzkin paths} and permutations  due to Fran\c con-Viennot \cite{FV}, and Foata-Zeilberger \cite{FZ}, respectively, see also \cite{CSZ, SZ10, SZ12}.

The first goal of this paper is to establish new combinatorial interpretations for $C_n(t,q)$, as well as their corresponding $\gamma$-expansions, using pattern avoiding permutations, which we define now.
Denote by $\S_n$ the set of permutations of length $n$. Given two permutations $\pi\in\S_n$ and $p\in\S_k,\:k\leq n$, we say that \emph{$\pi$ avoids the pattern $p$} if there does not exist a set of indices $1\le i_1<i_2<\cdots<i_k\le n$  such that the subsequence $\pi(i_1)\pi(i_2)\cdots\pi(i_k)$ of $\pi$ is order-isomorphic to $p$. For example, the permutation $15324$ avoids $231$. The set of permutations of length $n$ that avoid patterns $p_1,p_2,\cdots,p_m$ is denoted as $\S_n(p_1,p_2,\cdots,p_m)$.

A polynomial $f(x)=\sum_{i=0}^{n} a_i x^i\in\R[x]$ is called \emph{palindromic} if $a_i=a_{n-i}$ for $0\le i \le n$. Clearly $\{x^i(1+x)^{n-2i}\}_{i=0}^{\lfloor n/2 \rfloor}$ form a basis for the vector space of all palindromic polynomials of degree no greater than $n$. We call the unique expansion
$$
f(x)=\sum_{i=0}^{\lfloor n/2 \rfloor}\gamma_ix^i(1+x)^{n-2i}
$$
 the \emph{$\gamma$-expansion} for $f(x)$.


%

Now we give the first three main results of this paper, with the definitions of permutation statistics, sets and permutation patterns postponed to the next section.


\begin{theorem}\label{qnara}
The $(q,t)$-Catalan numbers $C_n(t, q)$ have the following ten interpretations
\begin{align*}
C_n(t, q)=\sum_{\pi\in\S_n(\tau)}t^{\des\:\pi}q^{\stat\:\pi},
\end{align*}
with $\tau$ being a pattern of length $3$, and $\stat$ being a permutation statistic. Ten choices for the pair $(\tau,\stat)$ are listed in Table~\ref{ten}.
\begin{table}[tbp]\caption{Ten choices for $(\tau,\stat)$}\label{ten}
\centering
\begin{tabular}{|c|c|c||c|c|c|}
\hline
\# & $\tau$ & $\stat$ & \# & $\tau$ & $\stat$\\
\hline
1 & $231$ & $\acb$ & 6 & $132$ & $\bca$\\
2 & $231$ & $\ai^*$ & 7 & $231$ & $\cab$\\
3 & $312$ & $\bac$ & 8 & $312$ & $\bca$\\
4 & $312$ & $\ai$ & 9 & $213$ & $\acb$\\
5 & $213$ & $\cab$ & 10 & $132$ & $\bac$\\
\hline
\end{tabular}
\end{table}
\end{theorem}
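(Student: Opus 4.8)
My plan is to funnel all ten identities through a single weighted-lattice-path model, prove one or two rows directly, and then transport these across the remaining rows of Table~\ref{ten} by the elementary symmetries of $\S_n$.

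\emph{Setting up the path model.} I would first rewrite the Stieltjes fraction \eqref{def-q-nara} as a Jacobi continued fraction by the classical even contraction. Reading off the partial numerators $a_{2k-1}=q^{k-1}$ and $a_{2k}=tq^{k-1}$, the contraction rules $b_0=a_1$, $b_h=a_{2h}+a_{2h+1}$, $\lambda_h=a_{2h-1}a_{2h}$ yield the Motzkin data
\begin{align*}
b_0=1,\qquad b_h=q^h+tq^{h-1}\ (h\ge1),\qquad \lambda_h=tq^{2h-2}.
\end{align*}
Flajolet's master theorem~\cite{Fl80} then identifies $C_n(t,q)$ with the generating polynomial $\sum_M \prod w(M)$ over Motzkin paths $M$ of length $n$ in which a level step at height $h$ is weighted $b_h$, a down step landing at height $h-1$ is weighted $\lambda_h$, and up steps are weighted $1$. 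This weighted path sum is the common target that each line of the table must reproduce.

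\emph{Proving one master row.} I would establish a single entry directly, say row~$7$, $(\tau,\stat)=(231,\cab)$, via the Fran\c con--Viennot bijection~\cite{FV}. That bijection encodes a permutation as a Motzkin path decorated with admissible labels (a Laguerre history), sending $\des$ to a linear count of its level and down steps and the vincular statistic $\cab=31\text{-}2$ to the sum of the labels. The decisive observation is that avoiding $231$ pins every label to its extreme admissible value, so the labelled histories degenerate to plain weighted Motzkin paths; matching the residual label contributions against the $q$-powers inside $b_h$ and $\lambda_h$, and the descents against the exponent of $t$, shows that $\sum_{\pi\in\S_n(231)}t^{\des\,\pi}q^{\cab\,\pi}$ equals the target. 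Running the parallel argument with the Foata--Zeilberger bijection~\cite{FZ}, which is adapted to excedance-type rather than descent-type statistics, disposes of the rows built on $\ai$ and $\ai^*$ (rows~$2$ and~$4$).

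\emph{Filling in the table by symmetry.} The group generated by reverse, complement and inverse acts on $\S_n$, permutes the four patterns $\{132,213,231,312\}$ transitively, and acts correspondingly on the four vincular statistics $\{13\text{-}2,2\text{-}31,31\text{-}2,2\text{-}13\}$ (for instance, complement exchanges $13\text{-}2$ with $31\text{-}2$, and reverse exchanges $13\text{-}2$ with $2\text{-}31$). For each generator I would record the induced transformation of a triple $(\tau,\stat,\des)$; those symmetries preserving $\des$ carry an identity over verbatim, while those sending $\des\mapsto n-1-\des$ still fix the value of $C_n(t,q)$ because this polynomial is palindromic in $t$ of degree $n-1$ (a fact verified separately, either from \eqref{def-q-nara} or from the first established interpretation). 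Chasing the orbits of the directly proven rows through this group then recovers the remaining entries.

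\emph{The main obstacle.} The genuinely delicate step is the statistic bookkeeping inside the master bijection: one must check not merely that $\des$ and the chosen vincular pattern land on \emph{some} pair of path statistics, but that, \emph{after} imposing $\tau$-avoidance, they match \emph{exactly} the height-dependent weights $q^h+tq^{h-1}$ and $tq^{2h-2}$ coming from the contraction --- equivalently, that the degrees of freedom in the Laguerre history destroyed by avoiding $\tau$ are precisely the crossings and nestings that would otherwise inflate the $q$-statistic. Verifying this exact matching, together with confirming that each symmetry sends a valid triple $(\tau,\stat,\des)$ to another valid triple rather than to a spurious combination, is where the real effort lies; the contraction and the appeal to Flajolet's theorem are routine by comparison.
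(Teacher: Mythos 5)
Your contraction of \eqref{def-q-nara} is computed correctly, and your master-row mechanism for row~7 is sound: pinning the Laguerre-history labels by $231$-avoidance (equivalently $(\bca)\:\pi=0$, via $\S_n(\bca)=\S_n(231)$) is exactly the mechanism behind the paper's Lemma~\ref{SZL1}, which is Shin--Zeng's continued fraction for $\sum_\pi t^{\des\:\pi}p^{(\bca)\:\pi}q^{(\cab)\:\pi}$ specialized at $p=0$. But your treatment of rows~2 and~4 has a genuine gap. The Foata--Zeilberger bijection translates \emph{excedance-type} statistics ($\exc$, $\fix$, crossings, nestings) into path data; rows~2 and~4 pair $\des$ with $\ai^*$ and $\ai$, which are both \emph{linear} statistics, so ``running the parallel argument with FZ'' has no statistic on the excedance side to match. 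The ingredient actually needed is the pattern-avoidance collapse $\ai\:\pi=(\bac)\:\pi$ for $\pi\in\S_n(312)$ and $\ai^*\:\pi=(\acb)\:\pi$ for $\pi\in\S_n(231)$ (the paper's Lemma~\ref{ai=pattern}, proved by noting that avoiding $312$ kills both the $3\text{-}12$ and the $312$ ways an inversion can be admissible, leaving exactly the $\bac$ occurrences). Without this identification, or an equivalent, rows~2 and~4 are simply not reached by your toolkit.

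Your symmetry closure is also short by one orbit. The only usable symmetries are $\{\mathrm{id},r,c,rc\}$: the inverse map destroys both $\des$ and the positional adjacency defining vincular patterns, so it transports nothing in Table~\ref{ten} to anything else in Table~\ref{ten}. Under this group of order four the eight vincular rows split into \emph{two} orbits of size four, namely $\{1,3,5,6\}$ (since $(231,\acb)$ maps under $r$, $c$, $rc$ to $(132,\bca)$, $(213,\cab)$, $(312,\bac)$) and $\{7,8,9,10\}$ (since $(231,\cab)$ maps to $(132,\bac)$, $(213,\acb)$, $(312,\bca)$). Your single master row~7 lies in the second orbit, so even granting the FZ step, rows~1, 3, 5, 6 remain unproven. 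You must either prove directly a second row in the first orbit (e.g.\ row~5, by pinning the labels with $(\bac)\:\pi=0$ instead of $(\bca)\:\pi=0$), or use, as the paper does, the fact that the joint distributions $(\des,\bac,\cab)$ and $(\des,\bca,\cab)$ share one continued fraction symmetric in $p$ and $q$, whose $p=0$ specialization yields rows 3, 5, 7, 8 in a single stroke and thereby bridges the two orbits. (Your appeal to palindromicity of $C_n(t,q)$ to absorb $\des\mapsto n-1-\des$ is fine, though it follows from the Lin--Fu $\gamma$-expansion rather than transparently from \eqref{def-q-nara} itself.)
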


The last four interpretations for the $\gamma$-coefficients given in the next theorem correspond in a natural way to the interpretaions labelled as entries $1,3,5,6$ in Table~\ref{ten}.
%
%

\begin{theorem}\label{thm:des-ai-qgamma}
For  $n\geq1$, the following $\gamma$-expansions formula holds true
\begin{align}\label{qgamma-unify}
C_n(t,q)=\sum_{k=0}^{\lfloor \frac{n-1}{2}\rfloor}
\gamma_{n,k}(q)t^k(1+t)^{n-1-2k},
\end{align}
where
\begin{align}
\gamma_{n,k}(q)&=\sum_{\pi\in\widehat{\S}_{n,k}(321)}q^{\inv\:\pi-\exc\:\pi}\label{eq:Lin-Fu}\\
&=\sum_{\pi\in\widetilde{\S}_{n,k}(213)}q^{(\cab)\:\pi}
=\sum_{\pi\in\widetilde{\S}_{n,k}(312)}q^{(\bac)\:\pi}\label{eq:213:312}\\
&=\sum_{\pi\in\widetilde{\S}_{n,k}(132)}q^{(\bca)\:\pi}
=\sum_{\pi\in\widetilde{\S}_{n,k}(231)}q^{(\acb)\:\pi}\label{eq:231:132}.
\end{align}
\end{theorem}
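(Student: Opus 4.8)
The plan is to treat the first formula \eqref{eq:Lin-Fu} as the anchor and to deduce the remaining four from it by transport of structure. Before any of this, one must know that the expansion \eqref{qgamma-unify} even exists, i.e.\ that $C_n(t,q)$, regarded as a polynomial in $t$ over $\Z[q]$, is palindromic of degree $n-1$. I would obtain this either by contracting the $S$-fraction \eqref{def-q-nara} into a $J$-fraction and reading off the manifest $t$-symmetry of the contracted coefficients, or combinatorially by exhibiting a weight-preserving, descent-reversing involution on one of the classes of Theorem~\ref{qnara}; the class-stabilizing symmetry (reverse--complement--inverse) of the pattern $231$ is the natural candidate on the $\S_n(231)$ model, and one checks it sends $\des\,\pi\mapsto n-1-\des\,\pi$ while fixing $(\acb)\,\pi$. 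Palindromicity makes the $\gamma_{n,k}(q)$ well defined, so it then suffices to compute them.

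For the anchor \eqref{eq:Lin-Fu} I would use the interpretation $C_n(t,q)=\sum_{\pi\in\S_n(321)}t^{\des\,\pi}q^{\inv\,\pi-\exc\,\pi}$ (the $321$-avoiding, $(\inv-\exc)$ form, cf.\ Lin--Fu) together with a valley-hopping group action on $\S_n(321)$. The steps are: (i) verify that the action is well defined on $\S_n(321)$, i.e.\ each generator preserves $321$-avoidance; (ii) show each generator changes $\des$ by exactly $\pm1$, so that every orbit $\mathcal O$ contributes $\sum_{\pi\in\mathcal O}t^{\des\,\pi}=t^{k(\mathcal O)}(1+t)^{n-1-2k(\mathcal O)}$, where $k(\mathcal O)$ is the minimal descent number in $\mathcal O$; (iii) prove that $\inv\,\pi-\exc\,\pi$ is constant on each orbit; and (iv) identify the descent-minimal orbit representatives with $\widehat{\S}_{n,k}(321)$. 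Summing over orbits then yields exactly \eqref{qgamma-unify} with $\gamma_{n,k}(q)=\sum_{\pi\in\widehat{\S}_{n,k}(321)}q^{\inv\,\pi-\exc\,\pi}$. Point (iii) is the heart of the matter: neither $\inv$ nor $\exc$ is individually invariant, and one must show that when a letter hops across an ascending or descending block the induced changes in the inversion number and in the excedance number cancel.

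Next I would derive the four vincular formulas \eqref{eq:213:312}--\eqref{eq:231:132} without rerunning the action. The bijections already built for Theorem~\ref{qnara} supply maps from the $321$-model to each of the $\S_n(213),\S_n(312),\S_n(132),\S_n(231)$ models that convert $\inv-\exc$ into the corresponding generalized statistic $\cab,\bac,\bca,\acb$ while preserving $\des$. Because these bijections preserve $\des$, they carry descent-minimal representatives to descent-minimal representatives, hence restrict to bijections $\widehat{\S}_{n,k}(321)\to\widetilde{\S}_{n,k}(\tau)$ that match the $q$-weights term by term; this produces the four remaining equalities at once. Alternatively, each vincular identity could be proved by its own valley-hopping action on the relevant class, but the transport argument is more economical and reuses work already done.

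I expect two steps to be the genuine obstacles. The first is the orbit-invariance of $\inv-\exc$ under valley-hopping (step (iii) above): the bookkeeping of how inversions and excedances change under a hop must be carried out precisely and shown to cancel on the nose, and one must confirm that the action never leaves $\S_n(321)$. The second is the behaviour of the generalized patterns $\cab,\bac,\bca,\acb$ under the symmetry operations (reverse, complement, inverse) underlying the transfer bijections: these operations permute the four dashed patterns among themselves, so one must align each operation with the correct target class and verify the $\des$-preservation needed for the restriction to the $\widetilde{\S}_{n,k}$ sets. Everything else should be routine verification.
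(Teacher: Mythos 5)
Your plan has two genuine breakdowns, both at the anchor. First, you have misquoted the Lin--Fu interpretation: in \eqref{eq:Lin-Fu} the variable $t$ tracks $\exc$, not $\des$. The paper's $321$-avoiding interpretation is $C_n(t,q)=\sum_{\pi\in\S_n(321)}t^{\exc\:\pi}q^{\inv\:\pi-\exc\:\pi}$, and $\widehat{\S}_{n,k}(321)$ is defined by $\exc\:\pi=k$ together with an excedance condition (``if $i<\pi(i)$ then $i+1$ is a nonexcedance bottom''); no descent or double-descent condition appears anywhere in its definition, so your step (iv) cannot identify these permutations as descent-minimal orbit representatives. Worse, your step (i) fails outright: the MFS-action is \emph{not} closed on $\S_n(321)$. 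Concretely, $1423\in\S_4(321)$, the letter $1$ is a double ascent (under $\pi(0)=0$), and $\varphi'_1(1423)=4231$, which contains the $321$-pattern $421$. This is exactly why the paper does not prove \eqref{eq:Lin-Fu} by valley-hopping at all: it cites Lin--Fu \cite{LF17} for that formula (see the remark following the theorem), and the agreement of \eqref{eq:Lin-Fu} with the other four expressions then follows from the uniqueness of $\gamma$-coefficients of the common palindromic polynomial $C_n(t,q)$, whose several descent-polynomial avatars are supplied by Theorem~\ref{qnara}.

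The transport step collapses for a related reason: the bijections behind Theorem~\ref{qnara} do not preserve $\des$. The Clarke--Steingr\'imsson--Zeng map $\varPhi$ converts linear statistics to cyclic ones, sending $\des$ of the source to $\exc$ of the image (Lemma~\ref{quintuple:des-exc}), and the remaining links in the proof of Theorem~\ref{qnara} (the edges $8$---$9$, $7$---$10$, $6$---$1$) use the reverse map, which sends $\des\mapsto n-1-\des$ and exchanges double descents with double ascents while flipping the boundary convention. So descent-minimal representatives are not carried to descent-minimal representatives; indeed the paper must define $\widetilde{\S}_{n,k}(132)$ and $\widetilde{\S}_{n,k}(231)$ via $\dd^*$ with the dual convention $\pi(0)=\pi(n+1)=n+1$, a distinction your transfer glosses over. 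The route that actually works --- and is the paper's --- is the one you set aside as uneconomical: run the MFS-action directly on each class, with boundary $0$ for $\tau\in\{213,312\}$ and the Lin--Zeng dual version for $\tau\in\{132,231\}$. Closure is then automatic, because each vincular statistic is constant on orbits (Lemma~\ref{13-2:2-13}) and $\S_n(\tau)$ coincides with the corresponding vincular-avoidance class (Lemma~\ref{231-321}), so the value zero of the statistic is preserved along orbits; each orbit contributes $q^{\stat\:\bar\pi}\,t^k(1+t)^{n-1-2k}$, yielding \eqref{eq:213:312} and \eqref{eq:231:132} directly. On this route your preliminary palindromicity step is also superfluous: the orbit decomposition produces the $\gamma$-expansion itself, of which palindromicity is a corollary.
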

\begin{remark}
Eq. \eqref{qgamma-unify} with interpretation \eqref{eq:Lin-Fu} is due to Lin and Fu~\cite{LF17}. Moreover, Blanco and Petersen~\cite{BP14} also obtained a $\gamma$-expansion formula for $C_n(tq,q^2)$, which should yield another interpretation for the $\gamma$-coefficients.
\end{remark}
We will also prove the following variation of Theorems~\ref{qnara} and \ref{thm:des-ai-qgamma}.

\begin{theorem}\label{thm:des-ai-qgamma-new} We have
\begin{align}
\label{213--132}
\sum_{\pi\in \S_{n}(213)}t^{\des\:\pi}q^{\ai\:\pi}&=
\sum_{\pi\in \S_{n}(132)}t^{\des\:\pi}q^{\ai^*\:\pi}\\
\label{213--ai}
&=\sum_{k=0}^{\lfloor \frac{n-1}{2}\rfloor}\biggl(\:\sum_{\pi\in\widetilde{\S}_{n,k}(213)}q^{\ai\:\pi}\biggr)t^k(1+t)^{n-1-2k}\\
\label{132--ai}
&=\sum_{k=0}^{\lfloor \frac{n-1}{2}\rfloor}\biggl(\:\sum_{\pi\in\widetilde{\S}_{n,k}(132)}q^{\ai^*\:\pi}\biggr)t^k(1+t)^{n-1-2k}.
\end{align}
\end{theorem}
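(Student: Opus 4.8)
The plan is to make the reverse-complement involution the organizing symmetry and reduce the entire statement to a single $\gamma$-expansion. Write $\phi\colon\pi\mapsto\pi^{rc}$ for the reverse-complement map, $\pi^{rc}(i)=n+1-\pi(n+1-i)$. Since the reverse-complement of the pattern $213$ is $132$, $\phi$ restricts to a bijection $\S_n(213)\to\S_n(132)$, and because reversal and complementation each turn $\des$ into $n-1-\des$, their composition $\phi$ preserves $\des$. I expect $\ai^*$ to be defined precisely as the $\phi$-companion of $\ai$, i.e. $\ai^*\:\pi^{rc}=\ai\:\pi$; this is exactly the relation that makes entries $2$ and $4$ of Table~\ref{ten} two faces of one identity, since the same $\phi$ matches $(312,\ai)$ with $(231,\ai^*)$. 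Confirming this against the definitions of the next section is the first checkpoint. Granting it, \eqref{213--132} is immediate. Moreover $\phi$ carries the kernel $\widetilde{\S}_{n,k}(213)$ onto $\widetilde{\S}_{n,k}(132)$, as it preserves both the number of descents and the double-descent structure, so it identifies the two $\gamma$-coefficient families term by term and deduces \eqref{132--ai} from \eqref{213--ai}. Thus it suffices to prove the single expansion \eqref{213--ai}.

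For \eqref{213--ai} I would run the modified Foata--Strehl (``valley-hopping'') action on $\S_n(213)$, the same device that underlies Theorem~\ref{thm:des-ai-qgamma}. Three things must be verified. First, the action stays inside $\S_n(213)$: the elementary hop never creates a $213$-occurrence, and this is where the specific shape of the pattern enters. Second, the statistic $\ai$ is constant on each orbit. Third, on a single orbit the descent enumerator factors as $t^{k}(1+t)^{n-1-2k}$, where $k$ is the number of descents of the unique double-descent-free member of the orbit, which is by construction the element of $\widetilde{\S}_{n,k}(213)$. Given these three facts, summing $t^{\des\:\pi}q^{\ai\:\pi}$ first over each orbit and then over orbit representatives yields \eqref{213--ai} with $\gamma$-coefficient $\sum_{\pi\in\widetilde{\S}_{n,k}(213)}q^{\ai\:\pi}$.

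The main obstacle is the second point, the invariance of $\ai$. In contrast to the vincular statistics $\cab$ and $\bca$ attached to the same kernels in Theorem~\ref{thm:des-ai-qgamma}, the statistic $\ai$ ($=\mathsf{adi}$) counts inversions and is not a priori local to the hopped letter, so its invariance is not automatic. The decisive computation is to show that when a letter leaps over the block separating its two landing sites, the inversions it gains against that block are exactly cancelled by the inversions it loses, after the admissibility condition and $213$-avoidance are used to pin down the relative order of the letters involved. Should this direct bookkeeping prove unwieldy, I would instead establish \eqref{213--ai} generatingfunctionally: encode $\S_n(213)$ by weighted Motzkin paths through the Fran\c con--Viennot or Foata--Zeilberger bijections highlighted in the introduction, read off a continued fraction (equivalently a recurrence) for $\sum_{n}\bigl(\sum_{\pi\in\S_n(213)}t^{\des\:\pi}q^{\ai\:\pi}\bigr)z^n$, extract its $\gamma$-expansion, and then match the resulting coefficients with the kernel sums over $\widetilde{\S}_{n,k}(213)$ by a direct combinatorial bijection.
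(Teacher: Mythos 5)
Your proposal is correct and follows essentially the same route as the paper: \eqref{213--132} via the reverse-complement map (the paper's Definition of $\ai^*$ is exactly its $rc$-companion, so $(\des,\ai)\:\pi=(\des,\ai^*)\:\pi^{rc}$), and \eqref{213--ai}, \eqref{132--ai} via the MFS-action using precisely the three facts you list, which are the paper's Lemma~\ref{Mfspre}, Lemma~\ref{lemact}, and the orbit factorization from the proof of Theorem~\ref{thm:des-ai-qgamma}. The only small refinement is that the invariance of $\ai$ you rightly flag as the crux is proved in Lemma~\ref{lemact} for \emph{all} of $\S_n$, with the cancellation bookkeeping handled by the $x$-factorization alone, so $213$-avoidance is not needed there (it enters only through closure of the action on $\S_n(213)$).
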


Our second goal is to derive
new examples of the following \emph{$(-1)$-phenomenon:} for certain combinatorial generating functions for a set of permutations or \emph{derangements}, substituting $-1$ for one of the variables gives an associated generating function over \emph{alternating permutations} in the set. A permutation is said to be alternating (or up-down) if it starts with an ascent and then descents and ascents come in turn. This has been called \emph{reverse alternating} in Stanley's survey~\cite{Sta1} and some of the other literatures but we stick with this convention throughout the paper. We denote by $\A_n$ the set of alternating permutations of length $n$, and by $\A_n(p_1,p_2,\cdots,p_m)$ the set of alternating permutations of length $n$ that avoid patterns $p_1,p_2,\ldots,p_m$.

The rest of the paper is organized as follows. In section~\ref{sec2: Pre} we give most of the definitions and provide the previously known results from the literature, which will be used to prove Theorems~\ref{qnara}, \ref{thm:des-ai-qgamma} and \ref{thm:des-ai-qgamma-new} in section~\ref{sec3: gamma-q-nara}. We consider a variation involving the {\em weak excedance} in section~\ref{sec4: var-q-nara}. Next in section~\ref{sec5: Cat}, we completely determine the existence of $(-1)$-phenomenon for $\S_n(\tau)$, where $\tau$ runs through all permutations in $\S_3$. For example, we have the following $q$-version of the $(-1)$-phenomenon on $\S_n(321)$ concerning $\exc$. Recall~\cite{BP14} that  Carlitz's $q$-Catalan numbers $C_n(q)$ are defined by
\begin{align}
{C}_n(q):=C_n(q,q^2).
\end{align}
Using the  Dyck path interpretation for $C_n(t,q)$ in \eqref{def-q-nara} (see \cite{Fl80}),  we see that $C_n(q)$ is a polynomial of degree $\binom{n}{2}$. For instance,
\begin{align*}
C_0(q) &=C_1(q)=1,\\
C_2(q) &=q+1,\\
C_3(q) &= q^3+q^2+2q+1,\\
C_4(q) &=q^6+q^5+2q^4+3q^3+3q^2+3q+1.
\end{align*}
\begin{theorem}\label{q-nara}
For any $n\ge 1$,
\begin{align}
\label{q-nara-odd}
C_n(-1,q)=\sum_{\pi\in\S_n(321)}(-1)^{\exc\:\pi}q^{\inv\:\pi-\exc\:\pi}&=\begin{cases}0 & \emph{if $n$ is even},\\
(-q)^{\frac{n-1}{2}}C_{\frac{n-1}{2}}(q^2) & \emph{if $n$ is odd},
\end{cases}\\
\label{q-nara-even}
\sum\limits_{\pi\in\D_n(321)}(-1)^{\exc\:\pi}q^{\inv\:\pi}
&=\begin{cases}(-q)^{\frac{n}{2}}C_{\frac{n}{2}}(q^2) & \emph{if $n$ is even},\\
0 & \emph{if $n$ is odd}.
\end{cases}
\end{align}
\end{theorem}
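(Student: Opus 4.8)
The plan is to prove both evaluations by substituting $t=-1$ into suitable continued fraction expansions and recognizing, in each case, the generating function of Carlitz's $q$-Catalan numbers with $q$ replaced by $q^2$. Write $\Phi(w)=\sum_{m\ge0}C_m(q^2)w^m$; since $C_m(q^2)=C_m(q^2,q^4)$, the series $\Phi$ is the Stieltjes continued fraction with partial numerators $q^{2i-2}w$, equivalently the unique power series with $\Phi(0)=1$ satisfying $\Phi(w)=1/(1-w\Phi(q^2w))$, i.e. $\Phi(w)-1=w\Phi(w)\Phi(q^2w)$. Both parts of the theorem will reduce to the single identity that the relevant specialized continued fraction equals $\Phi(-qz^2)=\sum_{m\ge0}(-q)^mC_m(q^2)z^{2m}$; the parity dichotomy (odd $n$ for $\S_n$, even $n$ for $\D_n$) is then automatic, because $\Phi(-qz^2)$ is a series in $z^2$.

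For the first equality in \eqref{q-nara-odd} I would invoke the interpretation $C_n(t,q)=\sum_{\pi\in\S_n(321)}t^{\exc\,\pi}q^{\inv\,\pi-\exc\,\pi}$ of Lin and Fu, recalled in the Remark after Theorem~\ref{thm:des-ai-qgamma}, and simply set $t=-1$. To evaluate $C_n(-1,q)$ I would put $t=-1$ directly in the defining continued fraction \eqref{def-q-nara}, obtaining the $S$-fraction with partial numerators $z,-z,qz,-qz,q^2z,-q^2z,\dots$. Its self-similarity (shifting down two levels multiplies every numerator by $q$, i.e. replaces $z$ by $qz$) gives, for $C(z)=\sum_n C_n(-1,q)z^n$, the functional equation $C(z)-1=z\bigl(C(z)+C(qz)-C(z)C(qz)\bigr)$. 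I would then check that $C(z)=1+z\Phi(-qz^2)$ satisfies this equation, using only the functional equation for $\Phi$ evaluated at $w=-qz^2$, and conclude by uniqueness of the power-series solution with constant term $1$. This yields $C_n(-1,q)=0$ for even $n$ and $C_n(-1,q)=(-q)^{(n-1)/2}C_{(n-1)/2}(q^2)$ for odd $n$. As a shortcut for the vanishing, note that setting $t=-1$ in the $\gamma$-expansion \eqref{qgamma-unify} kills every term through the factor $(1+t)^{n-1-2k}$ except $k=(n-1)/2$, which occurs only when $n$ is odd, so this route reduces the whole odd case to the top-coefficient identity $\gamma_{n,(n-1)/2}(q)=q^{(n-1)/2}C_{(n-1)/2}(q^2)$.

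For the derangement identity \eqref{q-nara-even} I would use the fixed-point refinement of the same continued fraction, furnished by the Foata--Zeilberger bijection and the quint-variate continued fraction of Shin and the fourth author. The key structural fact is that, under this correspondence restricted to $\S_n(321)$, the fixed points of $\pi$ are precisely the ground-level steps; consequently the trivariate generating function $\sum_n z^n\sum_{\pi\in\S_n(321)}s^{\fix\,\pi}t^{\exc\,\pi}q^{\inv\,\pi}$ is the $J$-fraction with $b_0=sz$, with $b_k=q^kz(1+t)$ for $k\ge1$, and with $\lambda_k=tq^{2k-1}z^2$, a weighting I would confirm by direct expansion for small $n$. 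Specializing $s=0$ extracts the derangements and removes only the ground-level weight, so $b_0=0$. Setting $t=-1$ then annihilates every level weight $b_k=q^kz(1+t)$, so the $J$-fraction degenerates into the $S$-fraction with partial numerators $\lambda_k=-q^{2k-1}z^2$, which is exactly $\Phi(-qz^2)$. Reading off the coefficient of $z^n$ gives $0$ for odd $n$ and $(-q)^{n/2}C_{n/2}(q^2)$ for even $n$, as claimed.

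The main obstacle is this second ingredient: establishing that in the $\S_n(321)$-specialization of the Foata--Zeilberger/Shin--Zeng continued fraction the statistic $\fix$ is carried solely by the ground-level steps, so that $s$ enters only through $b_0$, together with the precise bookkeeping of $\inv$ and $\exc$ that yields $b_k=q^kz(1+t)$ and $\lambda_k=tq^{2k-1}z^2$. Once this weighting is in hand, both the $t=-1$ collapse for derangements and the self-similar functional equation for permutations are short and formal; the only remaining care is to track the powers of $q$ so that Carlitz's $C_m(q^2)=C_m(q^2,q^4)$ emerges with the correct prefactor $(-q)^{\lfloor n/2\rfloor}$.
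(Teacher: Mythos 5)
Your proof is correct, and its engine is the same as the paper's: the weighted continued fraction you call the ``key structural fact'' --- $b_0=sz$, $b_k=(1+t)q^kz$ for $k\ge1$, $\lambda_k=tq^{2k-1}z^2$ for $\sum_n z^n\sum_{\pi\in\S_n(321)}s^{\fix\:\pi}t^{\exc\:\pi}q^{\inv\:\pi}$ --- is verbatim Lemma~\ref{SZ10p_0} (Cheng et al.), which the paper obtains from the Shin--Zeng quint-variate fraction (Lemma~\ref{Lemma:Shin-Zeng}) at $p=0$, $q=1$ together with Lemma~\ref{321-iff-ine=0}; so what you flag as ``the main obstacle,'' to be ``confirmed by direct expansion for small $n$,'' requires no new work and should certainly not rest on small-case checks. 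Granted that lemma, your treatment of \eqref{q-nara-even} (set $s=0$, $t=-1$, all $b_k$ vanish, and the surviving S-fraction with numerators $-q^{2k-1}z^2$ is recognized as $\Phi(-qz^2)$) coincides exactly with the paper's. The one genuine divergence is \eqref{q-nara-odd}: the paper substitutes $(t,y)=(-1/q,1)$ into \eqref{CEKS} and applies the contraction formula (Lemma~\ref{contra-formula}), which in one stroke identifies the signed sum with $C_n(-1,q)$ and yields $1+z/(1+qz^2/(1+q^3z^2/\cdots))$, whereas you take \eqref{comb1-q-nara} as known (legitimate --- the paper cites it to Lin--Fu and re-derives it by the same contraction) and evaluate $C_n(-1,q)$ via a self-similarity functional equation for the $t=-1$ specialization of \eqref{def-q-nara}. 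I checked this finish: with numerators $z,-z,qz,-qz,\dots$ the two-level shift gives $C(z)=1/\bigl(1-z/(1+zC(qz))\bigr)$, i.e. $C(z)-1=z\bigl(C(z)+C(qz)-C(z)C(qz)\bigr)$, and $C(z)=1+z\Phi(-qz^2)$ satisfies it by a one-line computation from $\Phi(w)-1=w\Phi(w)\Phi(q^2w)$ at $w=-qz^2$; uniqueness of the power-series solution with constant term $1$ holds because the coefficient of $z^n$ on the right involves only lower coefficients. The contraction route buys brevity and proves \eqref{comb1-q-nara} along the way; your route buys independence from Lemma~\ref{contra-formula} at the cost of introducing and solving a $q$-functional equation. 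One caution on your $\gamma$-expansion ``shortcut'': it does prove the even-$n$ vanishing cleanly (for even $n$ every term of \eqref{qgamma-unify} carries a factor $(1+t)$), but the top-coefficient identity $\gamma_{n,(n-1)/2}(q)=q^{(n-1)/2}C_{(n-1)/2}(q^2)$ is not independently available --- in the paper it is deduced \emph{from} \eqref{q-nara-odd} (see Proposition~\ref{cnq: three sets}) --- so that remark can only supplement, not replace, your main functional-equation argument, which is indeed how you present it.
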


Motivated by Lewis' work \cite{Lew,Lew1,Lew2,Lew3}, many authors \cite{Bon,CCZ,Yan,XY,MW} have studied pattern avoidance on alternating permutations, especially the Wilf-equivalence problem for patterns of length four. As for alternating permutations that avoid two patterns of length four simultaneously, our results in section~\ref{sec6: Sch} concerning $\S_n(2413,3142)$ and $\S_n(1342,2431)$ appear to be new. We close with some remarks to motivate further study along this line.

\section{Definitions and preliminaries}\label{sec2: Pre}
\subsection{Permutation statistics and a proof of Theorem~\ref{q-nara}}

We follow \cite{CSZ, SZ10, SZ12} for notations and the nomenclature of various permutation statistics. First we recall four classical involutions defined on $\S_n$, namely, the inverse, reverse, complement and the composition of the latter two. For $\pi\in\S_n$,
\begin{align*}
\pi^{-1} &:=\pi^{-1}(1)\pi^{-1}(2)\cdots\pi^{-1}(n),\\
\pi^r &:=\pi(n)\cdots\pi(2)\pi(1),\\
\pi^c &:=(n+1-\pi(1))(n+1-\pi(2))\cdots(n+1-\pi(n)),\\
\pi^{rc} &:=(n+1-\pi(n))\cdots(n+1-\pi(2))(n+1-\pi(1)).
\end{align*}
If we use the standard two-line notation to write $\pi$, then $\pi^{-1}$ is obtained by switching the two lines and rearranging the columns to make the first line increasing. For instance, if $\pi=\left(\begin{array}{ccc} 1 & 2 & 3\\ 2 & 3 & 1 \end{array}\right)$, then $\pi^{-1}=\left(\begin{array}{ccc} 1 & 2 & 3\\ 3 & 1 & 2 \end{array}\right)$.

\begin{Def}
Let $\pi=\pi(1)\pi(2)\cdots\pi(n)$ be a permutation, assume $\pi(n+1)=n+1$. A \emph{descent} (resp. an \emph{ascent}) in $\pi$ is a triple $(i,\pi(i),\pi(i+1))$ such that $i\in [n]$ and $\pi(i)>\pi(i+1)$ (resp. $\pi(i)<\pi(i+1)$). Here $\pi(i)$ is called the \emph{descent top} (resp. the \emph{ascent bottom}) and $\pi(i+1)$ is called the \emph{descent bottom} (resp. the \emph{ascent top}). An \emph{excedance} (resp. a \emph{nonexcedance}) in $\pi$ is a pair $(i,\pi(i))$ such that $i\in [n]$ and $\pi(i)>i$ (resp. $\pi(i)\le i$). Here $i$ is called the \emph{excedance bottom} (resp. the \emph{nonexcedance top}) and $\pi(i)$ is called the \emph{excedance top} (resp. the \emph{nonexcedance bottom}). The \emph{number of descents} (resp. the \emph{number of ascents}) in $\pi$ is denoted by $\des\:\pi$ (resp. $\asc\:\pi$), and the \emph{number of excedances} in $\pi$ is denoted by $\exc\:\pi$.
\end{Def}

Next, we introduce four statistics involving three consecutive letters in $\pi$.
\begin{Def}
For $\pi\in\S_n$, let $\pi(0)=\pi(n+1)=0$. Then any entry $\pi(i)$ ($i\in[n]$) can be classified according to one of the four cases:
\begin{itemize}
\item a \emph{peak} if $\pi(i-1)<\pi(i)$ and $\pi(i)>\pi(i+1)$;
\item a \emph{valley} if $\pi(i-1)>\pi(i)$ and $\pi(i)<\pi(i+1)$;
\item a \emph{double ascent} if $\pi(i-1)<\pi(i)$ and $\pi(i)<\pi(i+1)$;
\item a \emph{double descent} if $\pi(i-1)>\pi(i)$ and $\pi(i)>\pi(i+1)$.
\end{itemize}
\end{Def}
Let $\peak\:\pi$ (resp. $\valley\:\pi$, $\da\:\pi$, $\dd\:\pi$) count the number of peaks (resp. valleys, double ascents, double descents) in $\pi$, and define
\begin{align*}
\widetilde{\S}_{n,k}(213)&:=\{\pi\in \S_{n}(213): \dd\:\pi=0, \des\:\pi=k \},\\
\widetilde{\S}_{n,k}(312)&:=\{\pi\in \S_{n}(312): \dd\:\pi=0, \des\:\pi=k \},\\
\widetilde{\S}_{n,k}(132)&:=\{\pi\in \S_{n}(132): \dd^*\:\pi=0, \des\:\pi=k\},\\
\widetilde{\S}_{n,k}(231)&:=\{\pi\in \S_{n}(231): \dd^*\:\pi=0, \des\:\pi=k\}.
\end{align*}
 We emphasize here to avoid any future confusion, that whenever two versions of the same type of statistic exist, such as $\dd$ and $\dd^*$ above, we use the $*$ version to indicate the initial condition $\pi(0)=\pi(n+1)=n+1$, while the non-$*$ version means $\pi(0)=\pi(n+1)=0$, with the only exception being Lemma~\ref{SZ12_lem}.

Besides the patterns mentioned in the introduction, we shall also consider the so-called \emph{vincular patterns} \cite{BS}. The number of occurrences of vincular patterns $\cab$, $\bca$, $\bac$ and $\acb$ in $\pi\in \S_n$ are defined by
\begin{align*}
(\cab)\:\pi&=\#\{(i,j):  i+1<j\le n \text{ and } \pi(i+1)<\pi(j)<\pi(i)\},\\
(\bca)\:\pi&=\#\{(i,j):  j<i<n \text{ and } \pi(i+1)<\pi(j)<\pi(i)\},\\
(\bac)\:\pi&=\#\{(i,j): j<i<n \text{ and } \pi(i)<\pi(j)<\pi(i+1) \},\\
(\acb)\:\pi&=\#\{(i,j): i+1<j\le n \text{ and } \pi(i)<\pi(j)<\pi(i+1) \}.
\end{align*}
Previous notations extend naturally for vincular patterns. For example, we use $\S_n(\cab)$ to denote the set of permutations of length $n$ that avoid the vincular pattern $\cab$.
\begin{Def}
The statistic $\MAD$, the number of {\bf fix}ed points, {\bf w}eak {\bf ex}cedances, {\bf inv}ersion number, {\bf cros}sing number and {\bf i}nverse {\bf cr}ossing number, {\bf nest}ing number and {\bf i}nverse {\bf ne}sting number of $\pi\in\S_n$ are defined by
\begin{align*}
\MAD\:\pi &:=\des\:\pi+(\cab)\:\pi+2\cdot(\bca)\:\pi,\\
\fix\:\pi &:=\sum_{1\leq i\leq n}\chi\Big(\pi(i)=i\Big),\\
\wex\:\pi &:=\exc\:\pi+\fix\:\pi,\\
\inv\:\pi &=\sum_{1\leq i<j\leq n}\chi\Big(\pi(i)>\pi(j)\Big),\\
\cros\:\pi &:=\#\{(i,j):\:i<j\leq\pi(i)<\pi(j)\quad\textrm{or}\quad\pi(i)<\pi(j)<i<j\},\\
\nest\:\pi &:=\#\{(i,j):\:i<j\leq\pi(j)<\pi(i)\quad\textrm{or}\quad\pi(j)<\pi(i)<i<j\},\\
\icr\:\pi &:=\cros\:\pi^{-1},\\
\ine\:\pi &:=\nest\:\pi^{-1},
\end{align*}
where $\chi(A)=1$ if $A$ is true and 0 otherwise.
\end{Def}
\begin{remark}
Although we introduced $\ine$ above, it is really the same statistic as $\nest$. Namely, we have $\ine\:\pi=\nest\:\pi$ for any permutation $\pi$. When $\fix\:\pi=0$ this should be clear from definition. In general, it will suffice to observe that for any $i$ such that $\pi(i)=i$, there are as many $j<i$ with $\pi(j)>i$ as $k>i$ with $\pi(k)<i$.
\end{remark}

For all $1\leq i\leq n$, $\pi(i)$ is called a {\em left-to-right maximum} if $\pi(i)=\textrm{max}\:\{\pi(1),\pi(2),\cdots,\pi(i)\}$. An ascent bottom $\pi(i)$ $(i=1, \cdots, n)$ is called a \emph{foremaximum} of $\pi$ if it is at the same time a left-to-right maximum. Denote the number of foremaxima of $\pi$ by $\fmax\:\pi$.

\begin{Def}[Shin-Zeng]\label{coderange}
A permutation $\pi$ is called {\em coderangement} if $\fmax\:\pi=0$. Let $\D^*_n$ be the subset of $\S_n$ of coderangements.
\end{Def}
Shin and the fourth author \cite[Definition~7]{SZ10} introduced the above linear model of derangements. As suggested by its name, the set of coderangements is equinumerous with the set of derangements.

For the rest of this subsection, we collect all the lemmas that will be useful in later sections, and prove Theorem~\ref{q-nara}.

The Clarke-Steingr\'imsson-Zeng bijection \cite{CSZ} linking $\des$ based statistics with $\exc$ based ones is crucial for our ensuing derivation. It is the composition, say $\varPhi$, of two bijections between $\S_n$ and the set of certain weighted two colored Motzkin paths of length $n$. One bijection is due to Fran\c con and Viennot \cite{FV}, the other is due to Foata and Zeilberger \cite{FZ}. See \cite{CSZ} for a direct description of $\varPhi$ and further details.  The following equidistribution result reveals further properties of $\varPhi$ and is equivalent to \cite[Theorem~8]{SZ10} modulo one application of the inverse map: $\pi\mapsto \pi^{-1}$.
\begin{lemma}[Shin-Zeng]\label{quintuple:des-exc}
For any $n\ge 1$, there is a bijection $\varPhi$ on $\S_n$ such that
\begin{align*}
(\des,\fmax,\cab,\bca,\MAD)\:\pi=(\exc,\fix,\icr,\ine,\inv)\:\varPhi(\pi)\quad \emph{for all }\pi\in\S_n.
\end{align*}
\end{lemma}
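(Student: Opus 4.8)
The plan is to realize $\varPhi$ as the Clarke--Steingr\'imsson--Zeng bijection \cite{CSZ}, written as the composition $\varPhi=\Lambda_{FZ}^{-1}\circ\Lambda_{FV}$, where $\Lambda_{FV}\colon\S_n\to\mathcal{L}_n$ is the Fran\c con--Viennot bijection \cite{FV} and $\Lambda_{FZ}\colon\S_n\to\mathcal{L}_n$ is the Foata--Zeilberger bijection \cite{FZ}, both onto the common set $\mathcal{L}_n$ of weighted two-colored Motzkin paths (Laguerre histories) of length $n$. The strategy is to read off each of the two quintuples as one and the same list of \emph{path} statistics. Concretely, I would show that $\Lambda_{FV}$ translates $(\des,\fmax,\cab,\bca,\MAD)\:\pi$ into a fixed tuple of step-type counts and height/label sums of the path $h=\Lambda_{FV}(\pi)$, and that $\Lambda_{FZ}$ translates the excedance-based quintuple of $\sigma$ into the \emph{same} tuple of path statistics of $h=\Lambda_{FZ}(\sigma)$. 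Taking $\sigma=\varPhi(\pi)$ then forces $\Lambda_{FZ}(\sigma)=\Lambda_{FV}(\pi)$, and equating the two readings of this single path yields the asserted equality of quintuples.

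First I would record the Fran\c con--Viennot side. Here $\des$ and $\fmax$ are coded by the numbers of down- and level-steps of prescribed colors (the foremaxima being the level steps of one color at height zero), while the Mahonian statistic $\MAD$ is the total label sum, an area-type weight of the path. The two vincular statistics $\cab$ and $\bca$ appear as sums, taken over the non-initial steps, of a label and of a complementary height-minus-label difference, respectively. Next I would record the Foata--Zeilberger side: $\exc$ and $\fix$ are the numbers of up- and level-steps of prescribed colors, $\inv$ is again the total label sum, and the crossing and nesting numbers arise as the analogous height-difference sums along the path. The crux of this step is that the two bookkeepings produce identical functions of $h$.

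This matching of path statistics is essentially the content of \cite[Theorem~8]{SZ10}, which is phrased with the crossing and nesting numbers $\cros,\nest$ rather than the inverse versions $\icr,\ine$ appearing in the lemma. To pass to the present form I would compose with the involution $\pi\mapsto\pi^{-1}$ and invoke the definitional identities $\icr\:\pi=\cros\:\pi^{-1}$ and $\ine\:\pi=\nest\:\pi^{-1}$, using that $\inv$ and $\fix$ are invariant under this involution; reconciling the behaviour of the remaining statistics so that the five entries are correctly assembled on a single permutation is the routine but fiddly bookkeeping, and is exactly what the precise formulation of \cite[Theorem~8]{SZ10} furnishes. Since $\varPhi$ has already been fixed as $\Lambda_{FZ}^{-1}\circ\Lambda_{FV}$, no new map is introduced at this stage.

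I expect the genuine obstacle to lie in the two ``in-between'' correspondences, namely verifying that the Fran\c con--Viennot labels that encode $\cab$ and $\bca$ are carried to the Foata--Zeilberger height-difference data encoding $\icr$ and $\ine$. The descent/excedance pairing and the $\MAD$/$\inv$ pairing are standard features of the two classical bijections, whereas matching the pairs of vincular and crossing/nesting statistics demands a careful step-by-step analysis of the labelling rules of $\Lambda_{FV}$ and $\Lambda_{FZ}$. This is precisely the point at which I would rely on \cite[Theorem~8]{SZ10} rather than reprove the full correspondence from scratch.
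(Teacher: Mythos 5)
Your proposal is correct and follows essentially the same route as the paper, which likewise takes $\varPhi$ to be the Clarke--Steingr\'imsson--Zeng composition of the Fran\c con--Viennot and Foata--Zeilberger bijections through weighted two-colored Motzkin paths and obtains the quintuple equality from \cite[Theorem~8]{SZ10} combined with one application of $\pi\mapsto\pi^{-1}$ (which trades $\cros,\nest$ for $\icr,\ine$ while preserving $\fix$ and $\inv$). The paper offers no independent proof beyond this citation, so your deferral of the labelling verification to \cite[Theorem~8]{SZ10} matches its level of detail exactly.
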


Shin and the fourth author \cite{SZ10} deduced the continued fraction expansion for the quint-variate generating function of $\S_n$ with respect to the above statistics.
\begin{lemma}[Shin-Zeng]\label{Lemma:Shin-Zeng}
Let
\begin{align}\label{gf-quintuple}
A_n(x,y,q,p,s)&:=\sum_{\pi\in\S_n}x^{\des\:\pi}y^{\fmax\:\pi}q^{(\cab)\:\pi}p^{(\bca)\:\pi}s^{\MAD\:\pi}\nonumber \\
&=\sum_{\pi\in\S_n}x^{\exc\:\pi}y^{\fix\:\pi}q^{\icr\:\pi}p^{\ine\:\pi}s^{\inv\:\pi}.
\end{align}
Then we have
\begin{align}\label{cf-quintuple}
1+\sum_{n=1}^{\infty}A_n(x,y,q,p,s)z^n=\cfrac{1}{1-b_0z-\cfrac{a_0c_1z^2}{1-b_1z-\cfrac{a_1c_2z^2}{\ddots}}},
\end{align}
where, for $h\ge 0$,
\begin{align*}
a_h=s^{2h+1}[h+1]_{q,ps}, \quad b_h=yp^hs^{2h}+(x+q)s^h[h]_{q,ps},
\end{align*}
and
\begin{align*}
c_h=x[h]_{q,ps},\quad [h]_{u,v}:=(u^{h}-v^{h})/(u-v).
\end{align*}
\end{lemma}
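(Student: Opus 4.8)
The plan is to express the generating function as a weighted sum over lattice paths and then appeal to Flajolet's master theorem for continued fractions. By Lemma~\ref{quintuple:des-exc} the two expressions for $A_n(x,y,q,p,s)$ in \eqref{gf-quintuple} coincide, so it suffices to establish \eqref{cf-quintuple} for the excedance form $\sum_{\pi\in\S_n}x^{\exc\:\pi}y^{\fix\:\pi}q^{\icr\:\pi}p^{\ine\:\pi}s^{\inv\:\pi}$. For this I would use the Foata--Zeilberger bijection, which sends a permutation $\pi\in\S_n$ to a \emph{Laguerre history}: a Motzkin path of length $n$ whose steps carry integer labels bounded by the current height. Scanning the positions $i=1,\dots,n$, each $i$ produces one step---an up step when $i$ opens a new arc (a cycle valley), a down step when $i$ closes an arc (a cycle peak), a level step of one of two colours when $i$ is a double excedance or a double drop, and a level step of fixed-point type when $\pi(i)=i$. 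The height reached just before position $i$ equals the number of arcs left open at $i$, and the label attached to the step records how the arc incident to $i$ interleaves with these open arcs, which is exactly what $\icr$ and $\ine$ measure.

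Next I would invoke Flajolet's theorem: if in a Motzkin path $P$ every up step from height $h$ to $h+1$ is weighted $a_h$, every level step at height $h$ is weighted $b_h$, and every down step from height $h+1$ to $h$ is weighted $c_{h+1}$, then
\[
\sum_{P}w(P)\,z^{|P|}=\cfrac{1}{1-b_0z-\cfrac{a_0c_1z^2}{1-b_1z-\cfrac{a_1c_2z^2}{\ddots}}}.
\]
Consequently it remains to check two things: that the five-variable weight of $\pi$ factorizes as the product of the weights of the individual steps of its Laguerre history, and that summing these step weights over all admissible labels at a fixed height $h$ reproduces the asserted $a_h$, $b_h$ and $c_h$.

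The evaluation of the summed weights is where the precise formulas take shape. A step leaving or arriving at height $h$ has $h$ available labels; a label that produces a crossing contributes a factor $q$ while one that produces a nesting contributes $ps$, so summing the resulting geometric progression yields the factor $[h]_{q,ps}=q^{h-1}+q^{h-2}ps+\cdots+(ps)^{h-1}$ seen in $a_h$, $b_h$ and $c_h$. The variable $x$ tags the excedances: it rides on the down steps (each of which pairs with an arc-opening up step, so that the resulting count agrees with $\exc$) and on the double-excedance colour of the level steps, which explains the $x$ in $c_h$ and the summand $(x+q)s^h[h]_{q,ps}$ of $b_h$. The remaining summand $yp^hs^{2h}$ of $b_h$ is the contribution of a fixed point at height $h$: it carries $y$, and being nested inside all $h$ open arcs it contributes $p^h$ rather than $[h]_{q,ps}$. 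Finally the powers of $s$, namely $s^{2h+1}$, $s^{2h}$ and $s^h$ on up, level and down steps, record the number of inversions that each step creates as a function of the current height.

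The main obstacle is precisely this last point: the faithful transfer of the global statistic $\inv$ (equivalently $\MAD$) to the path. Whereas $\exc$, $\fix$, $\icr$ and $\ine$ are read off locally from each step's type and its single label, the inversion number must be shown to split additively over the steps, with the height-dependent weights $s^{2h+1}$, $s^{2h}$, $s^h$, and one must verify that this splitting is compatible with the $q$-versus-$ps$ refinement carried by the labels. Establishing this decomposition---tracking, position by position, how many larger entries already lie to the left of the current one and matching it against the height and label of the corresponding step---is the delicate part. Once it is in place, summing the local weights at each height is a routine geometric-series computation, and Flajolet's theorem then yields \eqref{cf-quintuple} directly.
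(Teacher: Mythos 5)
Your architecture is the right one, and it is in fact the approach of the original source: the paper under review does not prove Lemma~\ref{Lemma:Shin-Zeng} at all, but quotes it from Shin--Zeng \cite{SZ10}, whose proof is exactly the Foata--Zeilberger encoding of permutations by labelled (Laguerre) Motzkin histories followed by Flajolet's continued-fraction theorem. The equality of the two sums in \eqref{gf-quintuple} via Lemma~\ref{quintuple:des-exc}, the step-type dictionary (cycle valley/peak, two level colours, fixed-point level step), the geometric-series origin of $[h]_{q,ps}$ from the crossing-versus-nesting labels, and the role of $x$ and $y$ are all correctly placed in your sketch. (Two small slips: the up-step weight $a_h$ carries $[h+1]_{q,ps}$, not $[h]_{q,ps}$, because an up step from height $h$ has $h+1$ admissible labels; and whether $x$ rides on up steps or down steps is immaterial, since only the products $a_hc_{h+1}$ enter \eqref{cf-quintuple}.)

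The genuine gap is the one you flag yourself and then leave open: the additive transfer of $\inv$ to the path. Your proposed remedy --- tracking, position by position, how many larger entries lie to the left and matching this against heights and labels --- is not how the step is closed, and attempted naively it is unpleasant, because $\inv$ is not local in the Foata--Zeilberger encoding. The missing idea is the linear identity
$$\inv\:\pi=n-\wex\:\pi+\cros\:\pi+2\,\nest\:\pi,$$
which is Eq.~(40) of \cite{SZ10} and is even quoted later in this paper, in the proof of Theorem~\ref{HZthwex}. Applying it to $\pi^{-1}$ and using $\inv\:\pi^{-1}=\inv\:\pi$, $\cros\:\pi^{-1}=\icr\:\pi$, $\nest\:\pi^{-1}=\ine\:\pi$, $n-\wex\:\pi^{-1}=\exc\:\pi$, one gets $\inv\:\pi=\exc\:\pi+\icr\:\pi+2\,\ine\:\pi$, so $\inv$ becomes an explicit linear combination of statistics that are already local (step type plus label) on the history. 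The height-dependent $s$-exponents then drop out by pure bookkeeping rather than by a delicate decomposition: a fixed point at height $h$ has $\ine=h$, hence $s^{2h}p^h$; a level step of either colour with label $j$ contributes $\inv$-weight $h+j$, matching $s^h[h]_{q,ps}$; and a matched up/down pair at height $h$ with labels $j_u,j_d$ contributes $2h+1+j_u+j_d$, matching $a_hc_{h+1}$. With this identity supplied, your routine geometric-series summation and Flajolet's theorem do indeed finish the proof; without it, the proposal is a plan with its crucial step unverified.
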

%


 In order to make \eqref{gf-quintuple} suitable for the pattern-avoiding subsets, we have to invoke the following two lemmas.

\begin{lemma}\label{231-321}
For any $n\geq1$,
\begin{align*}
\S_n(\bac)=\S_n(213),\; &\S_n(\cab)=\S_n(312), \\
\S_n(\acb)=\S_n(132),\; &\S_n(\bca)=\S_n(231).
\end{align*}
\end{lemma}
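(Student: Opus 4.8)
The plan is to prove just one of the four identities directly, say $\S_n(\cab)=\S_n(312)$, and then obtain the remaining three for free by applying the three order-preserving involutions $\pi\mapsto\pi^r$, $\pi\mapsto\pi^c$, and $\pi\mapsto\pi^{rc}$ recorded at the start of this section.

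For the direct identity, the inclusion $\S_n(312)\subseteq\S_n(\cab)$ is immediate: every occurrence of the vincular pattern $\cab$ is, after forgetting the adjacency constraint, an occurrence of the classical pattern $312$, so a permutation containing $\cab$ necessarily contains $312$. The content lies in the reverse inclusion $\S_n(\cab)\subseteq\S_n(312)$, for which I would argue contrapositively: if $\pi$ contains $312$, then it contains $\cab$. Fix an occurrence at positions $a<b<c$ with $\pi(b)<\pi(c)<\pi(a)$, and consider the window $a,a+1,\dots,b$. Since $\pi(a)>\pi(c)$ while $\pi(b)<\pi(c)$, the values must cross the level $\pi(c)$ somewhere inside this window; letting $i$ be the largest index in $\{a,\dots,b-1\}$ with $\pi(i)>\pi(c)$, maximality forces $\pi(i+1)<\pi(c)$, and since $i+1\le b<c$ the triple $(i,i+1,c)$ is an occurrence of $\cab$, namely $\pi(i+1)<\pi(c)<\pi(i)$ with the first two positions adjacent. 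This compression of the ``$3$'' and the ``$1$'' into adjacent positions is the crux of the argument, and is the step I expect to require the most care in phrasing.

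The remaining three identities then follow by symmetry, the key observation being that reverse and complement preserve the adjacency constraint built into a vincular pattern: reversal sends a pair of adjacent positions to a pair of adjacent positions, and complementation leaves positions untouched, so each of $\pi\mapsto\pi^r$, $\pi\mapsto\pi^c$, $\pi\mapsto\pi^{rc}$ restricts to a bijection of the corresponding vincular-avoiding class onto another. Tracking the action on the patterns themselves, reversal sends the pair $(\cab,312)$ to $(\bac,213)$, complementation sends it to $(\acb,132)$, and their composition sends it to $(\bca,231)$; applying the relevant involution to the already proved identity $\S_n(\cab)=\S_n(312)$ then yields $\S_n(\bac)=\S_n(213)$, $\S_n(\acb)=\S_n(132)$, and $\S_n(\bca)=\S_n(231)$, respectively. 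I would deliberately avoid the inverse involution here, since $\pi\mapsto\pi^{-1}$ interchanges the roles of positions and values and hence does not respect vincular adjacency.
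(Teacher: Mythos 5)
Your proposal is correct, and it is more self-contained than what the paper actually writes: the paper simply cites Claesson for the first equality $\S_n(\bac)=\S_n(213)$ and declares the remaining three ``essentially the same and thus omitted,'' implicitly suggesting three more direct arguments of the same kind. Your compression step is exactly the standard Claesson-type argument (and it is airtight as you state it: $i$ exists because $\pi(a)>\pi(c)$ with $a\le b-1$, maximality gives $\pi(i+1)<\pi(c)$ whether $i+1<b$ or $i+1=b$, and $i+1\le b<c$ guarantees both the strict inequality $i+1<j$ in the paper's definition of an occurrence and that $\pi(i+1)\ne\pi(c)$). Where you genuinely diverge is in handling the other three identities by the reverse, complement, and reverse-complement involutions rather than by repeating the direct argument; this buys economy and makes the logical dependence explicit, and your verification that these maps carry adjacent positions to adjacent positions (hence carry each vincular class to the right vincular class, sending $(\cab,312)$ to $(\bac,213)$, $(\acb,132)$, $(\bca,231)$ respectively) is exactly the point one must check, as is your correct exclusion of $\pi\mapsto\pi^{-1}$, which scrambles positions and values and does not respect vincular adjacency. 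One cosmetic slip: calling $\pi\mapsto\pi^r$, $\pi^c$, $\pi^{rc}$ ``order-preserving'' is a misnomer---they are order-reversing in position or value; what you actually use, and correctly so, is that they are involutions preserving vincular structure. I would fix that phrase but otherwise your write-up could stand in for the omitted proof verbatim.
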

The first equality in Lemma \ref{231-321} was already observed by Claesson \cite{Cla}. The proofs of the remaining three equalities are essentially the same and thus omitted.

\begin{lemma}\label{321-iff-ine=0}
The mapping $\varPhi$ has the property that  $\varPhi(\S_{n}(231))=\S_n(321)$. Consequently, $\pi\in\S_n(321)$ if and only if $\ine\:\pi=0$.
\end{lemma}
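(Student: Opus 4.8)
The plan is to reduce the set identity $\varPhi(\S_n(231))=\S_n(321)$ to the pointwise statement $\ine\:\sigma=0\iff\sigma\in\S_n(321)$, and then to obtain this statement from the combinatorial meaning of $\nest$. By Lemma~\ref{231-321} a permutation $\pi$ avoids $231$ exactly when $(\bca)\:\pi=0$, and by the equidistribution of Lemma~\ref{quintuple:des-exc} we have $(\bca)\:\pi=\ine\:\varPhi(\pi)$. Since $\varPhi$ is a bijection on $\S_n$, it carries $\S_n(231)=\{\pi:(\bca)\:\pi=0\}$ bijectively onto $\{\sigma\in\S_n:\ine\:\sigma=0\}$. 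Hence both assertions follow once I show $\{\sigma:\ine\:\sigma=0\}=\S_n(321)$; the cardinality check $|\S_n(231)|=C_n=|\S_n(321)|$ is then automatically consistent.

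For that identity I first replace $\ine$ by $\nest$, using the remark that $\ine\:\sigma=\nest\:\sigma$ for every $\sigma$. Partition the positions of $\sigma$ into the class $W$ of weak excedances ($\sigma(i)\ge i$, including fixed points) and the class $D$ of strict deficiencies ($\sigma(i)<i$). Inspecting the definition of $\nest$, a pair $i<j$ satisfying $i<j\le\sigma(j)<\sigma(i)$ forces $i,j\in W$, while a pair satisfying $\sigma(j)<\sigma(i)<i<j$ forces $i,j\in D$; a mixed pair is never counted. Moreover, for a pair $i<j$ lying in the same class the positional half of the relevant inequality ($j\le\sigma(j)$ in the first case, $\sigma(i)<i$ in the second) holds automatically, so the pair nests precisely when $\sigma(i)>\sigma(j)$. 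Thus $\nest\:\sigma$ simply counts the inversions internal to $W$ plus those internal to $D$, and $\nest\:\sigma=0$ if and only if the values of $\sigma$ increase along $W$ and increase along $D$.

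It remains to identify this monotonicity condition with $321$-avoidance. The forward direction is immediate: in any decreasing triple $\sigma(a)>\sigma(b)>\sigma(c)$ with $a<b<c$ two of the three positions lie in one class, and the increase along that class contradicts the decrease, so no such triple exists. For the converse I argue contrapositively. A violation inside $W$ yields $i<j$ with $\sigma(i)>\sigma(j)\ge j$; among the $j$ values placed at positions $1,\dots,j$ at least the two values $\sigma(i),\sigma(j)$ are $\ge\sigma(j)$, so at most $j-2$ of the $\sigma(j)-1\ge j-1$ values below $\sigma(j)$ sit there, forcing at least one such small value to occur at some position $k>j$; then $i<j<k$ with $\sigma(i)>\sigma(j)>\sigma(k)$ is a $321$. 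A violation inside $D$ is handled symmetrically by producing an index $k<i$ with $\sigma(k)>\sigma(i)>\sigma(j)$.

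The main obstacle is precisely this pigeonhole step extending a ``wrong-order'' same-class pair to a full decreasing triple; the weak-excedance inequality $\sigma(j)\ge j$ (respectively the deficiency inequality $\sigma(i)<i$) is exactly what guarantees enough room for the third entry. The only other point needing care is the bookkeeping that, for a same-class pair, one half of each nesting inequality is free, which is what collapses $\nest=0$ to a bare monotonicity statement and thereby links it, via Lemmas~\ref{231-321} and~\ref{quintuple:des-exc}, to the desired image of $\varPhi$.
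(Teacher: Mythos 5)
Your proof is correct, but it reaches the conclusion by a genuinely different route than the paper. Both arguments begin identically, funneling everything through $(\bca)\:\pi=\ine\:\varPhi(\pi)$ (Lemma~\ref{quintuple:des-exc}) and $\S_n(\bca)=\S_n(231)$ (Lemma~\ref{231-321}), reducing the lemma to the identity $\{\sigma\in\S_n:\ine\:\sigma=0\}=\S_n(321)$. The divergence is in how that identity is established. The paper proves only one inclusion directly --- a $321$-pattern $\pi(i)>\pi(j)>\pi(k)$ produces a nesting by splitting into the cases $\pi(j)\le j$ and $\pi(j)>j$ --- and then obtains the reverse inclusion for free from the bijectivity of $\varPhi$ together with the enumerative fact $|\S_n(231)|=|\S_n(321)|=C_n$. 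You instead prove both inclusions combinatorially, with no appeal to cardinalities: your structural observation checks out, since the strict and weak inequalities in the definition of $\nest$ match the classes $W$ (weak excedances, $\sigma(i)\ge i$) and $D$ (strict deficiencies) exactly, mixed pairs can never nest, and within a class one half of each nesting condition is automatic, so $\nest\:\sigma$ is precisely the number of same-class inversions and vanishes iff $\sigma$ increases along $W$ and along $D$; and your pigeonhole step correctly upgrades a same-class inversion to a full $321$-pattern (from $\sigma(i)>\sigma(j)\ge j$ with $i<j$, at most $j-2$ of the $\sigma(j)-1\ge j-1$ values below $\sigma(j)$ fit among positions $1,\dots,j$, forcing a smaller entry at some $k>j$; the deficiency case is the mirror image). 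Note that your case split is essentially the paper's two cases run in the opposite direction. As for what each approach buys: the paper's counting shortcut is shorter but nonconstructive in one direction, whereas your argument is self-contained, independent of the Catalan enumeration of the two pattern classes, and yields the stronger standalone statement that $\nest\:\sigma$ counts the inversions internal to $W$ plus those internal to $D$ --- refining the lemma from a vanishing criterion to a statistic identity, and recovering the classical characterization of $321$-avoiding permutations as those increasing on excedance and non-excedance positions.
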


\begin{proof}
Since we already know $\varPhi$ is a bijection and that $|\S_n(231)|=|\S_n(321)|=C_n$, it will suffice to show that for any $\sigma\in\S_n(231)$, we have $\pi:=\varPhi(\sigma)\in\S_n(321)$. Suppose on the contrary that $\pi\not\in\S_n(321)$, and we have $\pi(i)>\pi(j)>\pi(k)$ with $1\le i<j<k\le n$. We discuss by two cases:
\begin{itemize}
    \item if $\pi(j)\le j$, then $\pi(k)<\pi(j)\le j<k$ form an inverse nesting of $\pi$;
    \item if $\pi(j)> j$, then $i<j< \pi(j)<\pi(i)$ form an inverse nesting of $\pi$.
\end{itemize}
Therefore in either case, we have $(\bca)\:\sigma=\ine\:\pi>0$, which implies that $\sigma\not\in \S_n(\bca)=\S_n(231)$, a contradiction. Now we see
\begin{align*}
\pi\in\S_n(321) & \text{ if and only if } \varPhi^{-1}(\pi)\in\S_n(231)=\S_n(\bca),\\
& \text{ if and only if } 0=(\bca)\:(\varPhi^{-1}(\pi)) = \ine\: \pi.
\end{align*}
The proof is now completed.
\end{proof}

By the second claim of Lemma~\ref{321-iff-ine=0}, the special case of Lemma~\ref{Lemma:Shin-Zeng} where $p=0$, $q=1$ yields a result of Cheng et al.~\cite[Theorem 7.3]{CEKS}.
\begin{lemma}[Cheng et al.]\label{SZ10p_0}
We have
\begin{align}\label{CEKS}
\sum_{n=0}^{\infty} \biggl(\sum_{\pi\in \S_n(321)} q^{\inv\:\pi}t^{\exc\:\pi}y^{\fix\:\pi}\biggr)z^n
=\cfrac{1}{1-yz-\cfrac{tqz^2}{1-(1+t)qz-\cfrac{tq^3z^2}{1-(1+t)q^2z-\cfrac{tq^5z^2}{\ddots}}}}.
\end{align}
\end{lemma}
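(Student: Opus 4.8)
The plan is to derive \eqref{CEKS} as a direct specialization of the quint-variate continued fraction in Lemma~\ref{Lemma:Shin-Zeng}, using Lemma~\ref{321-iff-ine=0} to cut the full symmetric group down to $\S_n(321)$. I work throughout with the \emph{second} expression for $A_n(x,y,q,p,s)$ in \eqref{gf-quintuple}, the one recording $(\exc,\fix,\icr,\ine,\inv)$, since this is the form whose statistics and avoidance class match the target.

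First I would set $p=0$. As $p$ marks $\ine$, the substitution $p=0$ annihilates every term with $\ine\:\pi>0$ and retains exactly those with $\ine\:\pi=0$; by the second claim of Lemma~\ref{321-iff-ine=0} these are precisely the permutations in $\S_n(321)$. Hence $A_n(x,y,q,0,s)=\sum_{\pi\in\S_n(321)}x^{\exc\:\pi}y^{\fix\:\pi}q^{\icr\:\pi}s^{\inv\:\pi}$. Next I would set $q=1$ to discard the now-superfluous $\icr$ statistic, obtaining $A_n(x,y,1,0,s)=\sum_{\pi\in\S_n(321)}x^{\exc\:\pi}y^{\fix\:\pi}s^{\inv\:\pi}$. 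After the cosmetic renaming $x\mapsto t$ and $s\mapsto q$ this is exactly the inner sum on the left-hand side of \eqref{CEKS}; in particular the $n=0$ term equals $1$, matching the leading $1$ on the continued-fraction side of \eqref{cf-quintuple}.

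It then remains to specialize the coefficients $a_h,b_h,c_h$. With $q=1$ and $ps=0$ one has $[h]_{q,ps}=[h]_{1,0}$, which equals $1$ for $h\ge1$ and $0$ for $h=0$, while the factor $p^h$ in $b_h$ equals $1$ when $h=0$ and $0$ when $h\ge1$. Feeding this in gives $a_h=s^{2h+1}$, $c_h=x$ for $h\ge1$, $b_0=y$, and $b_h=(x+1)s^h$ for $h\ge1$, so the numerators $a_{h-1}c_h$ in \eqref{cf-quintuple} become $x\,s^{2h-1}$. Under $x\mapsto t$, $s\mapsto q$ the levels turn into $1-yz$ and $1-(1+t)q^hz$, and the numerators into $t\,q^{2h-1}z^2$, which is precisely the pattern displayed in \eqref{CEKS}.

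The computation is routine; the only place demanding genuine care is the bookkeeping of the two $q$'s. The variable $q$ in Lemma~\ref{Lemma:Shin-Zeng} marks $\icr$ and is being set to $1$, whereas the $q$ appearing in the statement \eqref{CEKS} is the image of $s$, which marks $\inv$. Conflating them would corrupt the identity, so I would keep the Shin--Zeng variables $(x,y,q,p,s)$ strictly separate from the target variables until the final renaming. I would likewise double-check the boundary cases $h=0$ in $[h]_{1,0}$ and in $p^h$, since these are exactly the spots where the general formulas for $a_h,b_h,c_h$ behave non-uniformly and where a careless evaluation would misstate $b_0$.
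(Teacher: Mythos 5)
Your proposal is correct and is precisely the paper's route: the paper obtains \eqref{CEKS} as the special case $p=0$, $q=1$ of Lemma~\ref{Lemma:Shin-Zeng}, invoking the second claim of Lemma~\ref{321-iff-ine=0} to identify $\ine\:\pi=0$ with $\pi\in\S_n(321)$, exactly as you do. Your explicit bookkeeping of the specialized coefficients $a_h$, $b_h$, $c_h$ (including the boundary behaviour of $[h]_{1,0}$ and $p^h$ at $h=0$, and the renaming $x\mapsto t$, $s\mapsto q$) just fills in the routine computation the paper leaves implicit, and it checks out.
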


We also need a standard \emph{contraction formula} for continued fractions, see \cite[Eqs. (43) and (44)]{SZ10}.
\begin{lemma}[Contraction formula]\label{contra-formula}
There holds
\begin{align*}
\cfrac{1}{
1-\cfrac{c_{1}z}{1-\cfrac{c_{2}z}{
1-\cfrac{c_{3}z}{
1-\cfrac{c_{4}z}
{\ddots}}}}}
&=\cfrac{1}{1-c_{1}z-\cfrac{c_{1}c_{2}z^{2}}{1-(c_{2}+c_{3})z-\cfrac{c_{3}c_{4}z^{2}}{1-(c_4+c_5)z-\cfrac{c_5c_6z^2}{\ddots}}}}\\
&=1+\cfrac{c_{1}z}{1-(c_{1}+c_{2})z-\cfrac{c_{2}c_{3}z^{2}}{1-(c_{3}+c_{4})z-\cfrac{c_{4}c_{5}z^{2}}{1-(c_5+c_6)z-\cfrac{c_6c_7z^2}{\ddots}}}}.
\end{align*}
\end{lemma}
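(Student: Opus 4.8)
The statement is the pair of classical \emph{contraction} identities for a Stieltjes-type continued fraction: the first display is its \emph{even part} and the second its \emph{odd part}. The plan is to prove both at once through the fundamental recurrence for convergents, by extracting the even- and odd-indexed subsequences of the convergent numerators and denominators and recognizing that each satisfies the three-term recurrence attached to the corresponding Jacobi-type continued fraction on the right.

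First I would write the left-hand fraction as an ordinary continued fraction $b_0+\mathbf{K}(a_n/b_n)$ with $b_0=0$, $a_1=1$, $a_{n+1}=-c_nz$ and $b_n=1$ for $n\ge1$, and let $h_n/k_n$ be its $n$-th convergent. Both $h_n$ and $k_n$ are polynomials in $z$ obeying the single recurrence
\[
x_n=x_{n-1}-c_{n-1}z\,x_{n-2}\qquad(n\ge2),
\]
with seeds $h_1=k_1=1$, $h_0=0$, $k_0=1$; since each step raises the $z$-adic order, the convergents $h_n/k_n$ converge coefficientwise to the left-hand side as a formal power series.

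The crux is to collapse two steps of this recurrence into one. Applying it twice and substituting $x_{2m-3}=x_{2m-2}+c_{2m-3}z\,x_{2m-4}$ gives, for the even subsequence,
\[
x_{2m}=\bigl(1-(c_{2m-2}+c_{2m-1})z\bigr)x_{2m-2}-c_{2m-3}c_{2m-2}z^2\,x_{2m-4},
\]
while the symmetric elimination yields, for the odd subsequence,
\[
x_{2m+1}=\bigl(1-(c_{2m-1}+c_{2m})z\bigr)x_{2m-1}-c_{2m-2}c_{2m-1}z^2\,x_{2m-3}.
\]
These are exactly the recurrences governing the convergents of the two $J$-fractions on the right: reading off the partial denominators $1-(c_{2m-2}+c_{2m-1})z$ and partial numerators $c_{2m-3}c_{2m-2}z^2$ (respectively $1-(c_{2m-1}+c_{2m})z$ and $c_{2m-2}c_{2m-1}z^2$) matches them level by level. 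I would anchor the induction by checking the two lowest convergents in each case---$h_2/k_2=1/(1-c_1z)$ for the even part and $h_3/k_3=(1-c_2z)/(1-(c_1+c_2)z)$, which is the value of $1+c_1z\,\bigl(1-(c_1+c_2)z\bigr)^{-1}$, for the odd part---so that recurrence plus initial data identify $h_{2m}/k_{2m}$ (resp. $h_{2m+1}/k_{2m+1}$) with the convergents of the claimed $J$-fraction.

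Finally, since the even-indexed convergents $h_{2m}/k_{2m}$ (and likewise the odd-indexed ones) form a subsequence of a sequence tending to the left-hand fraction, they tend to the same formal power series; being simultaneously the convergents of the right-hand $J$-fractions, this forces the two displayed equalities. The only genuine difficulty is bookkeeping: one must keep the index shifts in the two-step recurrence straight and treat the exceptional top level $m=1$---which carries the anomalous partial denominator $1-c_1z$ and no preceding $z^2$-numerator---through the initial conditions rather than the generic recurrence. No deeper ingredient is required, which is presumably why the authors simply cite \cite{SZ10}.
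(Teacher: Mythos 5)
Your proposal is correct, but there is nothing in the paper to compare it against: the authors state this lemma without proof, citing Shin--Zeng \cite[Eqs.~(43) and (44)]{SZ10}, where it appears as a standard fact of classical continued fraction theory. What you supply is the classical derivation of the even and odd parts of a Stieltjes fraction, and the details check out. With seeds $h_0=0$, $k_0=1$, $h_1=k_1=1$ and the common recurrence $x_n=x_{n-1}-c_{n-1}z\,x_{n-2}$, your two-step eliminations are exactly right: substituting $x_{2m-3}=x_{2m-2}+c_{2m-3}z\,x_{2m-4}$ gives $x_{2m}=\bigl(1-(c_{2m-2}+c_{2m-1})z\bigr)x_{2m-2}-c_{2m-3}c_{2m-2}z^{2}x_{2m-4}$, and symmetrically for the odd subsequence; your anchors $h_2/k_2=1/(1-c_1z)$ and $h_3/k_3=(1-c_2z)/\bigl(1-(c_1+c_2)z\bigr)=1+c_1z\bigl(1-(c_1+c_2)z\bigr)^{-1}$ are precisely the first convergents of the two $J$-fractions, so induction identifies $h_{2m}/k_{2m}$ and $h_{2m+1}/k_{2m+1}$ with those convergents level by level. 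The limiting step is also sound in the formal-power-series setting: every $k_n$ has constant term $1$, so the quotients live in $\mathbb{Q}[c_1,c_2,\ldots][[z]]$, and $h_n/k_n$ agrees with the full fraction to strictly increasing $z$-adic order, whence both subsequences converge coefficientwise to the left-hand side and the two displayed identities follow. In short, the paper buys brevity by outsourcing the lemma; your argument makes it self-contained at the cost of the index bookkeeping you flag, and it is presumably the same mechanism underlying the cited source, so no genuinely new ingredient separates the two.
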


We are now ready to prove Theorem~\ref{q-nara}.

\begin{proof}[Proof of Theorem~\ref{q-nara}]
Taking $(t, y)=(-1/q,1)$ in \eqref{CEKS}, we have by applying the contraction formula
\begin{align*}
1+\sum_{n=1}^{\infty} \biggl(\sum_{\pi\in \S_n(321)}(-1)^{\exc\:\pi} q^{\inv\:\pi-\exc\:\pi}\biggr)z^n=1+\cfrac{z}{1+\cfrac{qz^2}{1+\cfrac{q^3z^2}{1+\cfrac{q^5z^2}{\ddots}}}}.
\end{align*}
We derive \eqref{q-nara-odd}  by comparing this with \eqref{def-q-nara}.

In the same vein, by setting $(t, y)=(-1,0)$ in \eqref{CEKS}, we obtain
\begin{align*}
\sum_{n=0}^{\infty}\biggl(\sum\limits_{\pi\in\D_n(321)}(-1)^{\exc\:\pi}q^{\inv\:\pi}\biggr)z^{n}=\cfrac{1}{1-\cfrac{(-q)z^2}{1-\cfrac{(-q^{3})z^{2}}
{1-\cfrac{(-q^5)z^2}{\ddots}}}}.
\end{align*}
Comparing with \eqref{def-q-nara}, we readily get \eqref{q-nara-even}.
\end{proof}

\subsection{Other combinatorial interpretations of \texorpdfstring{$C_n(q)$}{}}
We can derive several pattern avoiding interpretations for
our $q$-Catalan numbers $C_n(q)$ from $\gamma$-expansions proved in \cite{LF17, Lin17}. Let
\begin{align*}
\widehat{\S}_{n,k}(321) &:=\{\pi\in\S_n(321): \exc\:\pi=k\text{ and if } i<\pi(i),\\
&\phantom{==} \text{ then $i+1$ is a nonexcedance bottom}\}.
\end{align*}
According to this definition, for any $\pi\in\widehat{\S}_{n,k}(321)$, each occurrence of excedance uniquely leads to an occurrence of nonexcedance. So when $n$ is odd, the maximum for $\exc\:\pi$ is achieved at $k=\frac{n-1}{2}$, and in this case, the ``if'' condition becomes ``if and only if''. More precisely, take any $\pi\in\widehat{\S}_{n,\frac{n-1}{2}}(321)$, we have for $1\le i\le n-1$ that the following are equivalent:
\begin{itemize}
    \item $i<\pi(i)$,
    \item $i+1$ is a nonexcedance bottom, and
    \item $\pi(i)-1$ is a nonexcedance top.
\end{itemize}

This analysis shows that $\widehat{\S}_{2n+1,n}(321)$ is exactly the set as described in \cite[Excercise 145]{Sta2}, and therefore it is enumerated by $C_n$.

Interestingly, we find yet another two $q$-$\gamma$-expansions in Lin's work \cite[Theorems~1.2 and 1.4]{Lin17}.
\begin{lemma}[Lin]
For any $n\ge 1$,
\begin{align}
\sum_{\pi\in\S_n(321)}t^{\wex\:\pi}q^{\inv\:\pi} &=\sum_{k=1}^{\lfloor\frac{n+1}{2}\rfloor}\left(\sum_{\pi\in\emph{NDW}_{n,k}(321)}q^{\inv\:\pi}\right)t^k(1+t/q)^{n+1-2k},\label{Lin-eq2}\\
\sum_{\pi\in\D_n(321)}t^{\exc\:\pi}q^{\inv\:\pi} &=\sum_{k=1}^{\lfloor\frac{n}{2}\rfloor}\left(\sum_{\pi\in\emph{NDE}_{n,k}(321)}q^{\inv\:\pi}\right)t^k(1+t)^{n-2k},\label{Lin-eq1}
\end{align}
where
\begin{align*}
\emph{NDW}_{n,k}(321) &:=\{\pi\in\S_{n}(321):\wex\:\pi=k,~\emph{no}~i~\emph{such~that}~\pi(i+1)\geq i+1, i\geq\pi^{-1}(i)\},\\
\emph{NDE}_{n,k}(321) &:=\{\pi\in\D_{n}(321):\exc\:\pi=k,~\emph{no $i$ such that $\pi^{-1}(i)<i<\pi(i)$}\}.
\end{align*}
\end{lemma}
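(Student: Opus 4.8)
The plan is to read both identities as $q$-refined $\gamma$-positivity statements and to prove them by a valley-hopping (modified Foata--Strehl) group action on the relevant $321$-avoiding sets, using the Dyck-path model of $\S_n(321)$ on which $\inv$, $\exc$ and $\wex$ have transparent descriptions. The role of Lemma~\ref{SZ10p_0} in this plan is twofold: it guarantees that the left-hand sides are the coefficients of a manageable continued fraction, and after a change of variables it will confirm $\gamma$-positivity abstractly, giving an independent handle on the $\gamma_{n,k}(q)$. Concretely, specializing the fixed-point variable in \eqref{CEKS} to $y=0$ gives the derangement generating function
\[
\sum_{n\ge0}\Bigl(\sum_{\pi\in\D_n(321)}t^{\exc\:\pi}q^{\inv\:\pi}\Bigr)z^n
=\cfrac{1}{1-\cfrac{tqz^2}{1-(1+t)qz-\cfrac{tq^3z^2}{1-(1+t)q^2z-\ddots}}},
\]
and the substitution $w=z(1+t)$, $u=t/(1+t)^2$ (under which $t^k(1+t)^{n-2k}z^n=u^kw^n$) turns this into a $J$-fraction whose coefficients $b_h=q^h$ and $\lambda_h=q^{2h-1}u$ are free of $t$; by Flajolet's theorem its coefficient of $u^kw^n$ is a weighted Motzkin-path count, which is the candidate for $\gamma_{n,k}(q)$. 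Setting instead $y=t$ merges the fixed points into $t^{\wex}=t^{\exc+\fix}$; here it is cleaner to un-contract via Lemma~\ref{contra-formula} to the Stieltjes fraction with $c_{2i-1}=tq^{i-1}$ and $c_{2i}=q^i$, whose isolation of $t$ at the odd levels is what produces the $q$-shifted factor $(1+t/q)$ and the degree shift to $n+1$.

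The combinatorial heart of the argument is the toggle. For \eqref{Lin-eq2} I would show that each ``free'' coordinate of a $321$-avoiding permutation can be switched on or off, and that doing so changes $(\wex,\inv)$ by exactly $(+1,-1)$; hence an orbit of size $2^m$ carries total weight $t^{\wex_0}q^{\inv_0}(1+t/q)^m$, where the orbit minimum has $\wex_0=k$ and $m=n+1-2k$. (For $n=3$ one finds the orbit $\{312,132,213,123\}$ contributing $q^2t(1+t/q)^2$ together with the singleton orbit $\{231\}$ contributing $q^2t^2$, matching the left-hand side.) For \eqref{Lin-eq1} the analogous toggle on derangements changes $(\exc,\inv)$ by $(+1,0)$, yielding the plain factor $(1+t)$. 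In both cases the orbit representatives are precisely the permutations admitting no toggle, and I would identify these with $\mathrm{NDW}_{n,k}(321)$ and $\mathrm{NDE}_{n,k}(321)$ respectively: the conditions ``no $i$ such that $\pi(i+1)\ge i+1,\ i\ge\pi^{-1}(i)$'' and ``no $i$ such that $\pi^{-1}(i)<i<\pi(i)$'' are exactly the assertions that the relevant local configuration is rigid.

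The hard part will be making the toggle rigorous and proving its two defining properties. First, one must pin down the local move on the Dyck-path (or two-line) model, verify that it preserves the $321$-avoidance class, and confirm the claimed $(\wex,\inv)$- (resp. $(\exc,\inv)$-) covariance at the level of the explicit statistics; the $q$-weight $t/q$ in \eqref{Lin-eq2} means the move must simultaneously add a weak excedance and delete an inversion, and checking this exact bookkeeping is delicate. Second, one must prove that ``no admissible toggle'' is equivalent to the ``no $i$'' conditions above, so that the representative set is exactly $\mathrm{NDW}_{n,k}$ resp. $\mathrm{NDE}_{n,k}$. Once these two points are established, summing the orbit weights reproduces \eqref{Lin-eq2} and \eqref{Lin-eq1}, and the continued-fraction computation of the previous paragraph serves as a consistency check that the resulting $\gamma_{n,k}(q)$ are the correct path generating functions.
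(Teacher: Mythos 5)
The paper does not actually prove this lemma --- it is quoted verbatim from Lin \cite[Theorems~1.2 and 1.4]{Lin17} --- so your attempt must stand on its own, and as written it does not. Your analytic scaffolding is correct and checkable: specializing $y=0$ and $y=t$ in \eqref{CEKS}, un-contracting via Lemma~\ref{contra-formula} to the Stieltjes fraction with $c_{2i-1}=tq^{i-1}$, $c_{2i}=q^i$, and substituting $w=z(1+t)$, $u=t/(1+t)^2$ (and its $q$-shifted analogue) do yield $J$-fractions whose coefficients are free of $t$, which by Flajolet's theory \cite{Fl80} proves abstractly that the $\gamma$-coefficients in the bases $t^k(1+t)^{n-2k}$ and $t^k(1+t/q)^{n+1-2k}$ are polynomials in $q$ with nonnegative coefficients, realized as weighted Motzkin-path counts; your $n=3$ orbit computation for \eqref{Lin-eq2} is also correct. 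But the lemma asserts more: that these $\gamma$-coefficients equal $\sum_{\pi\in\mathrm{NDW}_{n,k}(321)}q^{\inv\:\pi}$ and $\sum_{\pi\in\mathrm{NDE}_{n,k}(321)}q^{\inv\:\pi}$, and that identification is the entire content of the statement. The continued fraction alone cannot distinguish these sets from any other sets with the same generating function.

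The genuine gap is that the ``toggle'' carrying this identification is never constructed. You do not define the local move on the Dyck-path or two-line model, do not prove it preserves $321$-avoidance (the MFS action used elsewhere in the paper does \emph{not} stabilize $\S_n(321)$, nor does it control $\inv$), do not verify the exact covariances $(\wex,\inv)\mapsto(\wex+1,\inv-1)$ for \eqref{Lin-eq2} and $(\exc,\inv)\mapsto(\exc+1,\inv)$ for \eqref{Lin-eq1}, and do not prove that the toggle-free elements are precisely $\mathrm{NDW}_{n,k}(321)$ and $\mathrm{NDE}_{n,k}(321)$. None of these is routine: converting a fixed point into an excedance generically changes $\inv$ by a context-dependent amount, so obtaining a \emph{uniform} weight $t/q$ (resp.\ $t$) per toggle requires a carefully designed move --- this is exactly what Lin builds in \cite{Lin17} via a group action adapted to the arc/Dyck-path structure of $321$-avoiding permutations, transferred through a Foata--Zeilberger-type correspondence. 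Since you yourself flag these verifications as ``the hard part'' and leave them undone, the proposal is a sound strategy together with a consistency check on small cases, not a proof of the lemma.
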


Now we can give the following three alternative interpretations for $C_n(q)$.
\begin{proposition}\label{cnq: three sets}
For any $n\geq1$,
\begin{align*}
C_n(q^2)=q^{-2n}\sum_{\pi\in\widehat{\S}_{2n+1,n}(321)}q^{\inv\:\pi} =q^{-2n}\sum_{\pi\in\emph{NDW}_{2n+1,n+1}(321)}q^{\inv\:\pi}=q^{-n}\sum_{\pi\in\emph{NDE}_{2n,n}(321)}q^{\inv\:\pi}.
\end{align*}
\end{proposition}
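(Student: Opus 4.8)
The plan is to extract all three identities from the already-established $q$-$\gamma$-expansions by specializing to the top degree in $t$, where the palindromic basis $\{t^k(1+t)^{n-1-2k}\}$ (or its shifted variants) degenerates to a single monomial. I would treat the three equalities in order, since they correspond respectively to \eqref{eq:Lin-Fu}, \eqref{Lin-eq2}, and \eqref{Lin-eq1}.

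For the first equality, recall that $C_n(q)=C_n(q,q^2)$, so that $C_n(q^2)=C_n(q^2,q^4)$; equivalently I want the coefficient extraction from \eqref{qgamma-unify} with interpretation \eqref{eq:Lin-Fu}. The key observation, already spelled out in the paragraph preceding the statement, is that for $N=2n+1$ the maximal value of $\des$ (equivalently $\exc$ after applying $\varPhi$) is exactly $k=n=\frac{N-1}{2}$, so the summand $t^k(1+t)^{N-1-2k}$ with $k=n$ contributes the pure power $t^n$ and no higher-degree term survives. Thus $\gamma_{2n+1,n}(q)=\sum_{\pi\in\widehat{\S}_{2n+1,n}(321)}q^{\inv\:\pi-\exc\:\pi}$ is precisely the top $t$-coefficient, and since every such $\pi$ has $\exc\:\pi=n$ the weight $q^{\inv\:\pi-\exc\:\pi}$ equals $q^{\inv\:\pi-n}$. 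First I would set $N=2n+1$ in \eqref{qgamma-unify}, read off the leading coefficient, and substitute $q\mapsto q^2$ to match $C_n(q^2,q^4)$; the factor $q^{-2n}$ then appears exactly because $\exc=n$ and the substitution doubles the exponent, giving $q^{-2n}\sum q^{\inv\:\pi}$ as claimed.

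For the second and third equalities I would run the identical top-degree argument on Lin's expansions \eqref{Lin-eq2} and \eqref{Lin-eq1}. In \eqref{Lin-eq2} the variable is $t=q^{\wex}$ and the basis is $t^k(1+t/q)^{n+1-2k}$; setting $n\mapsto 2n+1$, the maximal weak excedance number forces $k=n+1=\lfloor\frac{(2n+1)+1}{2}\rfloor$, so again only the single term $t^{n+1}$ survives and the inner sum over $\mathrm{NDW}_{2n+1,n+1}(321)$ is isolated, after which the appropriate $q$-power normalization and the substitution $q\mapsto q^2$ yield the middle expression. Similarly for \eqref{Lin-eq1}, taking $n\mapsto 2n$ (an even length, matching the derangement case \eqref{q-nara-even}) pushes $\exc$ to its maximum $k=n=\lfloor\frac{2n}{2}\rfloor$, collapsing $(1+t)^{n-2k}$ to $(1+t)^0=1$ and extracting $\sum_{\pi\in\mathrm{NDE}_{2n,n}(321)}q^{\inv\:\pi}$; comparison with the right-hand side of \eqref{q-nara-even} then supplies the factor $q^{-n}$.

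The main obstacle will be bookkeeping the $q$-normalizations correctly rather than any conceptual difficulty: I must track how the substitution $q\mapsto q^2$ interacts with the statistic in each inner sum (whether it is $\inv-\exc$ or $\inv$) and verify that the leading $q$-powers $q^{-2n}$, $q^{-2n}$, $q^{-n}$ are exactly the factors produced by the fixed value of $\exc$ or $\wex$ on the top-degree set. In particular I would double-check that in \eqref{Lin-eq2} the binomial base $(1+t/q)$ rather than $(1+t)$ does not introduce a stray power of $q$ in the surviving top term, and that the definitions of $\widehat{\S}_{2n+1,n}(321)$, $\mathrm{NDW}_{2n+1,n+1}(321)$ and $\mathrm{NDE}_{2n,n}(321)$ really do coincide with the index sets appearing at the extremal value of $k$ in each expansion. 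Once these normalizations are confirmed, the three identities follow immediately.
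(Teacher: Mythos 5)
Your central mechanism is wrong: extracting the ``top degree in $t$'' does not isolate the extremal $\gamma$-coefficient, and the premise it rests on is false. Over the full avoidance class the maximal value of the statistic is $2n$, not $n$ (e.g.\ the decreasing permutation in $\S_{2n+1}(231)$ has $2n$ descents, and $23\cdots(2n+1)1\in\S_{2n+1}(321)$ has $2n$ excedances), so $C_{2n+1}(t,q)$ has $t$-degree $2n$; the statement in the paper that the maximum of $\exc$ is $\frac{n-1}{2}$ concerns only the restricted set $\widehat{\S}_{n,k}(321)$, which you have misread as a bound on all of $\S_n(321)$. In the expansion \eqref{qgamma-unify} the term $t^k(1+t)^{2n-2k}$ has degree $2n-k$, so the maximal-$k$ term is the one of \emph{lowest} degree: the genuine top coefficient is $[t^{2n}]C_{2n+1}(t,q)=\gamma_{2n+1,0}(q)=1$, while
\begin{align*}
[t^{n}]\,C_{2n+1}(t,q)=\sum_{k=0}^{n}\binom{2n-2k}{n-k}\gamma_{2n+1,k}(q),
\end{align*}
which mixes all the $\gamma$'s rather than producing $\gamma_{2n+1,n}(q)$ alone. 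The same failure occurs verbatim for \eqref{Lin-eq2} and \eqref{Lin-eq1}. No coefficient extraction at a single power of $t$ can do what you want; the only way the basis ``degenerates to a single monomial'' is by \emph{evaluation}: at $t=-1$ every $(1+t)^{m-2k}$ with $m-2k>0$ vanishes, leaving only the extremal $k$, and for \eqref{Lin-eq2}, whose basis is $t^k(1+t/q)^{n+1-2k}$, the correct substitution is $t=-q$.

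This is exactly what the paper does, and it exposes a second missing ingredient in your plan: after the substitution one still needs a closed evaluation of the left-hand side, namely $C_{2n+1}(-1,q)=(-q)^{n}C_{n}(q^{2})$ from \eqref{q-nara-odd} and its derangement analogue \eqref{q-nara-even} from Theorem~\ref{q-nara}. You invoke \eqref{q-nara-even} for the third equality but supply nothing in place of \eqref{q-nara-odd} for the first two; your ``substitute $q\mapsto q^{2}$ to match $C_n(q^2,q^4)$'' step has no valid counterpart, and your accounting of $q^{-2n}$ as coming solely from $\exc=n$ misses the factor $q^{-n}$ contributed by $(-q)^{n}$ in \eqref{q-nara-odd} (the correct bookkeeping is $\gamma_{2n+1,n}(q)=q^{-n}\sum q^{\inv\:\pi}=q^{n}C_n(q^2)$). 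For the middle equality one also needs the inverse map $\pi\mapsto\pi^{-1}$ to trade $\wex\:\pi$ for $2n+1-\exc\:\pi$ before comparing with \eqref{q-nara-odd}, a step absent from your outline. With the $t=-1$ (resp.\ $t=-q$) evaluations and Theorem~\ref{q-nara} in place, the three identities do follow as quickly as you hoped, but as written your proposal would prove nothing.
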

\begin{proof}
The first equality involving $\widehat{\S}_{2n+1,n}(321)$ follows from putting $t=-1$ in \eqref{qgamma-unify} and comparing \eqref{eq:Lin-Fu} with \eqref{q-nara-odd}. The second interpretation involving $\text{NDW}_{2n+1,n+1}(321)$ is a result of taking $t=-q$ in \eqref{Lin-eq2}, replacing $\wex\:\pi$ by $2n+1-\exc\:\pi$, and comparing the result with \eqref{q-nara-odd}. The last one follows from \eqref{Lin-eq1} (setting $t=-1$) and \eqref{q-nara-even} similarly.
\end{proof}

\begin{remark}
Two remarks on Proposition \ref{cnq: three sets} are in order. First, as a by-product we note that $\inv\:\pi$ is even for any $\pi\in\widehat{\S}_{2n+1,n}(321)$ (resp. $\pi\in\textrm{NDW}_{2n+1,n+1}(321)$), and $\inv\:\pi$ has the parity of $n$ for any $\pi\in\textrm{NDE}_{2n,n}(321)$. A direct combinatorial explanation of this might be interesting. On the other hand, from a bijective point of view, we note that the second equality above is a natural result of the inverse map $\pi \mapsto \pi^{-1}$, while a bijection deducing the third equality is possible via the two colored Motzkin path \cite{Lin17,LF17}. We leave the details as exercises for motivated readers. Moreover, we note by passing that $|\text{NDE}_{2n,n}(321)|=C_n$ is equivalent to Exercise 151 in \cite{Sta2}.
\end{remark}


\section{Proofs of Theorems \ref{qnara}, \ref{thm:des-ai-qgamma} and \ref{thm:des-ai-qgamma-new}}
\label{sec3: gamma-q-nara}

In order to prove Theorem~\ref{qnara}, we begin with a crucial lemma, which follows from \cite[Eq. (39)]{SZ12}.
\begin{lemma}[Shin-Zeng]\label{SZL1} The following four polynomials are equal
\begin{align*}
&\sum_{\pi\in\S_n}t^{\des\:\pi}p^{(\bac)\:\pi}q^{(\cab)\:\pi}=\sum_{\pi\in\S_n}t^{\des\:\pi}p^{(\cab)\:\pi}q^{(\bac)\:\pi}\\
&=\sum_{\pi\in\S_n}t^{\des\:\pi}p^{(\bca)\:\pi}q^{(\cab)\:\pi}=
\sum_{\pi\in\S_n}t^{\des\:\pi}p^{(\cab)\:\pi}q^{(\bca)\:\pi}.
\end{align*}
\end{lemma}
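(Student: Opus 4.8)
The plan is to view the four sums as specializations of the single generating polynomial
\begin{align*}
F_n(t;a,b,c):=\sum_{\pi\in\S_n}t^{\des\:\pi}a^{(\cab)\:\pi}b^{(\bca)\:\pi}c^{(\bac)\:\pi},
\end{align*}
so that the two sums on the first line are $F_n(t;q,1,p)$ and $F_n(t;p,1,q)$, while the two on the second line are $F_n(t;q,p,1)$ and $F_n(t;p,q,1)$. In this language the asserted chain of equalities reduces to two statements: first, the symmetry $F_n(t;q,p,1)=F_n(t;p,q,1)$, i.e. the triple $(\des,\cab,\bca)$ is invariant under exchanging $\cab$ and $\bca$; and second, the joint equidistribution $F_n(t;q,1,p)=F_n(t;q,p,1)$, i.e. $(\des,\cab,\bac)$ and $(\des,\cab,\bca)$ share the same distribution. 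Granting these, all four polynomials coincide: the first statement gives the equality of the two second-line sums, and the second (together with its image under $p\leftrightarrow q$) links each first-line sum to a second-line one.

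For the first statement I would specialize Lemma~\ref{Lemma:Shin-Zeng} by setting $y=s=1$ and $x=t$, which turns its generating function into $\sum_{n\geq 0}F_n(t;q,p,1)z^n$ and its continued-fraction coefficients into
\begin{align*}
a_h=[h+1]_{q,p},\qquad b_h=p^h+(t+q)[h]_{q,p}=[h+1]_{q,p}+t[h]_{q,p},\qquad c_h=t[h]_{q,p}.
\end{align*}
Since $[h]_{q,p}=(q^h-p^h)/(q-p)$ is symmetric in $q$ and $p$, so are $a_h$ and $c_h$; and $b_h$ is symmetric as well, the middle equality using $p^h+q[h]_{q,p}=[h+1]_{q,p}$. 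Hence the whole continued fraction is invariant under $q\leftrightarrow p$, which forces $F_n(t;q,p,1)=F_n(t;p,q,1)$ and settles the symmetry $\cab\leftrightarrow\bca$.

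The crux is the second statement, the equidistribution of $(\des,\cab,\bac)$ with $(\des,\cab,\bca)$, and here the difficulty is precisely that Lemma~\ref{Lemma:Shin-Zeng} records $\bca$ but not the ascent-type statistic $\bac$. The plan is to produce the continued fraction for $\sum_{n\geq 0}F_n(t;q,1,p)z^n$ directly from the Fran\c con--Viennot side of the bijection underlying $\varPhi$ (Lemma~\ref{quintuple:des-exc}): encode each $\pi$ as a weighted Motzkin path whose step labels record $\cab$ together with $\bac$, the latter replacing the role played by $\bca$ in the $(\des,\cab,\bca)$ encoding. The main obstacle is to verify that, level by level, the $\bac$-labels contribute exactly the same $[h]_{q,p}$-weights as the $\bca$-labels, so that the two path models produce one and the same continued fraction; this coincidence is the content of \cite[Eq.~(39)]{SZ12}. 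Once it is in hand we obtain $F_n(t;q,1,p)=F_n(t;q,p,1)$, and combined with the first statement all four sums in Lemma~\ref{SZL1} are equal.
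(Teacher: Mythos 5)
Your proposal is correct and takes essentially the same route as the paper: the paper's proof simply quotes Eq.~(39) of \cite{SZ12}, which asserts both the $(\des,\bac,\cab)$/$(\des,\bca,\cab)$ equidistribution (your ``crux'') and an explicit continued fraction with coefficients $c_{2i-1}=[i]_{p,q}$, $c_{2i}=t[i]_{p,q}$ whose manifest $p\leftrightarrow q$ symmetry yields the remaining equalities. Your specialization of Lemma~\ref{Lemma:Shin-Zeng} at $y=s=1$, $x=t$ produces exactly the contraction of that same continued fraction, so your symmetry step is the paper's observation in equivalent form, and the key external ingredient, \cite[Eq.~(39)]{SZ12}, is identical in both arguments.
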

\begin{proof} Indeed, equation (39) in \cite{SZ12} reads:
\begin{align*}
\sum_{n=0}^{\infty} \biggl(\sum_{\pi\in\S_n}t^{\des\:\pi}p^{(\bac)\:\pi}q^{(\cab)\:\pi}\biggr)z^n&=
\sum_{n=0}^{\infty}\biggl(\sum_{\pi\in\S_n}t^{\des\:\pi}p^{(\bca)\:\pi}q^{(\cab)\:\pi}\biggr)z^n\\
&=\cfrac{1}{1-\cfrac{c_1z}{1-\cfrac{c_2z}{1-\cfrac{c_3z}{\ddots}}}}
\end{align*}
with $c_{2i}=t[i]_{p,q}$ and $c_{2i-1}=[i]_{p,q}$ for $i\geq 1$, where two misprints in \cite{SZ12} are corrected.
The continued fraction shows clearly that the generating function is symmetric in $p$ and $q$.
\end{proof}

Next we give the definition of the statistic \emph{admissible inversion}, which was first introduced by Shareshian and Wachs\cite{SW}.

\begin{Def}\label{ai1}
Let $\pi=\pi(1)\pi(2)\cdots \pi(n)$ be a permutation of $\S_{n}$ and $\pi(0)=\pi(n+1)=0$.  An \emph{admissible inversion} of $\pi$ is an inversion
 pair $(\pi(i),\pi(j))$, i.e., $1\leq i<j\leq n$ and $\pi(i)>\pi(j)$,
  satisfying  either of the following conditions:
\begin{itemize}
 \item $\pi(j)<\pi(j+1)$ or
 \item there is some $l$ such that $i<l<j$ and $\pi(j)>\pi(l)$.
\end{itemize}
\end{Def}
If we apply reverse-complement to this definition, we get the following version, which was also used in \cite[Definition 1]{LZ15}.
\begin{Def}\label{ai2}
Let $\pi=\pi(1)\pi(2)\cdots \pi(n)$ be a permutation of $\S_{n}$ and $\pi(0)=\pi(n+1)=n+1$.
A \emph{star admissible inversion} of $\pi$ is a pair $(\pi(i),\pi(j))$ such that $1\leq i<j\leq n$ and $\pi(i)>\pi(j)$, satisfying either of the following conditions:
\begin{itemize}
 \item $\pi(i-1)<\pi(i)$ or
 \item there is some $l$ such that $i<l<j$ and $\pi(i)<\pi(l)$.
\end{itemize}
\end{Def}

Let $\ai\:\pi$ and $\ai^*\:\pi$ be the numbers of admissible inversions and star admissible inversions of $\pi\in \S_n$, respectively. For example,
if $\pi=231$, then $\ai\:\pi=0$ while $\ai^*\:\pi=2$.

\begin{lemma}\label{ai=pattern}
We have
\begin{align}
\ai\:\pi&=(\bac)\:\pi, \emph{ if }\pi\in \S_n(312),\label{adi1}\\
\ai^*\:\pi&=(\acb)\:\pi, \emph{ if }\pi\in \S_n(231).\label{adi2}
\end{align}
\end{lemma}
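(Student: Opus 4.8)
The plan is to prove \eqref{adi1} directly and then obtain \eqref{adi2} for free from the reverse-complement symmetry already built into Definitions~\ref{ai1} and~\ref{ai2}. Reverse-complementation is an involution carrying $\S_n(231)$ onto $\S_n(312)$, since the reverse-complement of the pattern $231$ is $312$; and a direct check of the vincular-pattern dictionary shows it interchanges the $\bac$-occurrences of $\pi$ with the $\acb$-occurrences of $\pi^{rc}$, so that $(\acb)\:\pi=(\bac)\:\pi^{rc}$. Because $\ai^*$ is constructed precisely as the reverse-complement analogue of $\ai$, we have $\ai^*\:\pi=\ai\:\pi^{rc}$; applying \eqref{adi1} to $\pi^{rc}\in\S_n(312)$ then gives $\ai^*\:\pi=\ai\:\pi^{rc}=(\bac)\:\pi^{rc}=(\acb)\:\pi$, which is \eqref{adi2}. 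Thus the whole lemma reduces to \eqref{adi1}.

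To prove \eqref{adi1}, I fix $\pi\in\S_n(312)$ and recall from Definition~\ref{ai1} (boundary $\pi(0)=\pi(n+1)=0$) that an inversion $(i,j)$, with $i<j$ and $\pi(i)>\pi(j)$, is admissible when either $(1)$ $\pi(j)<\pi(j+1)$, or $(2)$ there is an $l$ with $i<l<j$ and $\pi(l)<\pi(j)$. The first step, and the conceptual heart of the argument, is the observation that condition $(2)$ is vacuous on $\S_n(312)$: if it held, then positions $i<l<j$ would carry values $\pi(l)<\pi(j)<\pi(i)$, which is an occurrence of $312$, contradicting $\pi\in\S_n(312)$. Hence every admissible inversion of $\pi$ satisfies $(1)$, so its lower entry $\pi(j)$ is an ascent bottom (in particular $j\le n-1$, since $\pi(n+1)=0$).

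The second step is to match these ascent-bottom inversions with $\bac$-occurrences via the identity on index pairs. Given a condition-$(1)$ inversion $(i,j)$ we have $i<j<n$, $\pi(j)<\pi(i)$ and $\pi(j)<\pi(j+1)$; were $\pi(j+1)<\pi(i)$, the positions $i<j<j+1$ would exhibit $\pi(j)<\pi(j+1)<\pi(i)$, a forbidden $312$. Therefore $\pi(j)<\pi(i)<\pi(j+1)$, which is exactly a $\bac$-occurrence at $(i,j)$. Conversely, any $\bac$-occurrence $(i,j)$, with $i<j<n$ and $\pi(j)<\pi(i)<\pi(j+1)$, is an inversion whose lower entry $\pi(j)$ is an ascent bottom, hence a condition-$(1)$ admissible inversion. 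As this correspondence is the identity on the pairs $(i,j)$, it is a bijection, and \eqref{adi1} follows.

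Finally, a word on where care is needed. The two pattern arguments above are short and clean; the one place demanding genuine bookkeeping is the reverse-complement reduction, where I must verify that reverse-complementation not only transports the inversion set and the two vincular statistics correctly, but also sends the boundary convention $\pi(0)=\pi(n+1)=0$ of Definition~\ref{ai1} to the convention $\pi(0)=\pi(n+1)=n+1$ of Definition~\ref{ai2}, turning condition $(1)$, namely $\pi(j)<\pi(j+1)$, into the starred condition $\pi(i-1)<\pi(i)$ and condition $(2)$ into its starred counterpart. I expect this transcription, though routine, to be the main obstacle; once checked, it legitimizes treating \eqref{adi2} entirely as the mirror image of \eqref{adi1}.
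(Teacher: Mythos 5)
Your proposal is correct and follows essentially the same route as the paper: the paper likewise classifies admissible inversions of $\pi\in\S_n(312)$ by noting that condition $(2)$ produces a $312$ occurrence and condition $(1)$ produces either a $\bac$ or a $3\text{-}12$ triple, so that $312$-avoidance leaves exactly the $\bac$-occurrences. For \eqref{adi2} the paper simply says the proof is ``similar,'' while you formalize this via the reverse-complement involution $\pi\mapsto\pi^{rc}$ --- but that is precisely the symmetry the paper itself builds into Definition~\ref{ai2}, so it amounts to the same argument.
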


\begin{proof}
By Definition~\ref{ai1}, an inversion pair $(\pi(i), \pi(j))$ of a permutation $\pi\in \S_n$ is admissible if and only if  either of the following conditions holds
\begin{itemize}
\item the triple $(\pi(i), \pi(j), \pi(j+1))$ forms a pattern $\bac$ or $3\text{-}12$;
\item the triple $(\pi(i), \pi(l), \pi(j))$ with $i<l<j$ forms a pattern $312$.
\end{itemize}
Thus, if $\pi\in \S_n(312)$, the permutation $\pi$ avoids both $312$ and $3\text{-}12$. This proves \eqref{adi1}. The proof of \eqref{adi2} is similar.
\end{proof}

We have announced ten interpretations for $C_n(t,q)$ using pattern-avoiding permutations, as numbered in Table~\ref{ten} of Theorem~\ref{qnara}. Moreover, the following interpretation of $C_n(t,q)$ using pattern avoiding permutations first appeared in \cite{LF17}
\begin{align}\label{comb1-q-nara}
C_n(t,q)=\sum_{\pi\in\S_n(321)}t^{\exc\:\pi}q^{\inv\:\pi-\exc\:\pi}.
\end{align}
We note that equality~\eqref{comb1-q-nara} follows also from Cheng et al.~\cite[Theorem 7.3]{CEKS} (see Lemma~\ref{SZ10p_0}) by applying a standard contraction formula (see Lemma~\ref{contra-formula}).
For reader's convenience, we use the symbol $i$---$j$ to denote the equality between two interpretations:
$$
\sum_{\pi\in \S_n(\tau_i)}t^{\des\, \pi}q^{\stat_i \pi}=
\sum_{\pi\in \S_n(\tau_j)}t^{\des\, \pi}q^{\stat_j\pi},
$$
for labels $i,j\in\{0, 1, \ldots, 10\}$,
where 0 represents the interpretation  in \eqref{comb1-q-nara}, and
we break down the proof of Theorem~\ref{qnara} as showing equalities represented by all the edges in Figure~\ref{eleven},
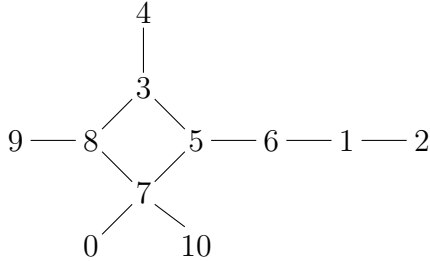
\begin{figure}[htb]
\begin{tikzpicture}[scale=1]
\draw(0,0) node{$9$};
\draw(1,0) node{$8$};
\draw(2.4,0) node{$5$};
\draw(3.4,0) node{$6$};
\draw(4.4,0) node{$1$};
\draw(5.4,0) node{$2$};
\draw(1.7,0.7) node{$3$};
\draw(1.7,1.7) node{$4$};
\draw(1.7,-0.7) node{$7$};
\draw(1,-1.4) node{$0$};
\draw(2.4,-1.4) node{$10$};
\draw (1.15,-.15)--(1.55,-.55);
\draw (1.15,-1.25)--(1.55,-0.85);
\draw (1.15,0.15)--(1.55,0.55);
\draw (1.85,0.55)--(2.25,0.15);
\draw (1.85,-0.55)--(2.25,-0.15);
\draw (1.85,-.85)--(2.25,-1.15);
\draw (0.2,0)--(0.8,0);
\draw (2.6,0)--(3.2,0);
\draw (3.6,0)--(4.2,0);
\draw (4.6,0)--(5.2,0);
\draw (1.7,0.9)--(1.7,1.5);
\end{tikzpicture}
\caption{The proof flowchart for Theorem~\ref{qnara}\label{eleven}}
\end{figure}

\begin{proof}[Proof of Theorem \ref{qnara}]
$\phantom{t}$
\begin{itemize}

	\item 0---7: We make the substitutions $(x,y,q,p,s)=(t/q,1,1,0,q)$
in \eqref{gf-quintuple}, then apply Lemma~\ref{321-iff-ine=0} and the definition of $\MAD$ to obtain
\begin{align*}
\sum_{\pi\in\S_n(321)}t^{\exc\:\pi}q^{\inv\:\pi-\exc\:\pi}
&=\sum_{\pi\in\S_n(231)}t^{\des\:\pi}q^{(\cab)\:\pi}.
\end{align*}
    \item 1---2,\:3---4: These two follow directly from Lemma~\ref{ai=pattern}.
    \item 5---3---7---8:
With Lemma~\ref{231-321} in mind, we set $p=0$ in Lemma~\ref{SZL1} to get
\begin{align*}
\sum_{\pi\in\S_n(213)}t^{\des\:\pi}q^{(\cab)\:\pi}
=\sum_{\pi\in\S_n(312)}t^{\des\:\pi}q^{(\bac)\:\pi}
=\sum_{\pi\in\S_n(231)}t^{\des\:\pi}q^{(\cab)\:\pi}
=\sum_{\pi\in\S_n(312)}t^{\des\:\pi}q^{(\bca)\:\pi}.
\end{align*}
    \item 5---6: The reverse-complement transformation $\pi\mapsto\pi^{rc}$ provides us with
\begin{align*}
\sum_{\pi\in\S_n(213)}t^{\des\:\pi}q^{(\cab)\:\pi}
&=\sum_{\pi\in\S_n(132)}t^{\des\:\pi}q^{(\bca)\:\pi}.
\end{align*}
    \item 8---9,\:7---10,\:6---1: Recall that $C_n(t,q)$ is palindromic in $t$, then we apply the reverse map $\pi\mapsto\pi^r$ to get
\begin{align*}
\sum_{\pi\in\S_n(312)}t^{\des\:\pi}q^{(\bca)\:\pi}
&=\sum_{\pi\in\S_n(312)}t^{n-1-\des\:\pi}q^{(\bca)\:\pi}=\sum_{\pi\in\S_n(213)}t^{\des\:\pi}q^{(\acb)\:\pi},\\
\sum_{\pi\in\S_n(231)}t^{\des\:\pi}q^{(\cab)\:\pi}
&=\sum_{\pi\in\S_n(231)}t^{n-1-\des\:\pi}q^{(\cab)\:\pi}=\sum_{\pi\in\S_n(132)}t^{\des\:\pi}q^{(\bac)\:\pi},\\
\sum_{\pi\in\S_n(132)}t^{\des\:\pi}q^{(\bca)\:\pi}&=\sum_{\pi\in\S_n(132)}t^{n-1-\des\:\pi}q^{(\bca)\:\pi}=\sum_{\pi\in\S_n(231)}t^{\des\:\pi}q^{(\acb)\:\pi}.
\end{align*}
\end{itemize}
By gathering all the equalities above, we complete the proof.
\end{proof}

The proofs of Theorems~\ref{thm:des-ai-qgamma} and \ref{thm:des-ai-qgamma-new} build on several lemmas.

\begin{Def}[MFS-action]
Let $\pi\in\S_n$ with boundary condition $\pi(0)=\pi(n+1)=0$,
 for  any  $x\in[n]$, the {\em$x$-factorization} of $\pi$ reads $\pi=w_1 w_2x w_3 w_4,$ where $w_2$ (resp.~$w_3$) is the maximal contiguous subword immediately to the left (resp.~right) of $x$ whose letters are all larger than $x$.  Following Foata and Strehl~\cite{FSt} we define the action $\varphi_x$ by
$$
\varphi_x(\pi)=w_1 w_3x w_2 w_4.
$$
Note that if $x$ is a double ascent (resp. double descent), then $w_2=\varnothing$ (resp. $w_3=\varnothing$), and if $x$ is a peak then
$w_2=w_3=\varnothing$.
For instance, if $x=3$ and $\pi=28531746\in\S_7$, then $w_1=2,w_2=85,w_3=\varnothing$ and $w_4=1746$.
Thus $\varphi_x(\pi)=23851746$.
Clearly, $\varphi_x$ is an involution acting on $\S_n$ and it is not hard to see that $\varphi_x$ and $\varphi_y$ commute for all $x,y\in[n]$. Br\"and\'en~\cite{Bra08} modified the map $\varphi_x$ to be
\begin{align*}
\varphi'_x(\pi):=
\begin{cases}
\varphi_x(\pi),&\text{if $x$ is not a  valley of $\pi$};\\
\pi,& \text{if $x$ is a valley  of $\pi$.}
\end{cases}
\end{align*}

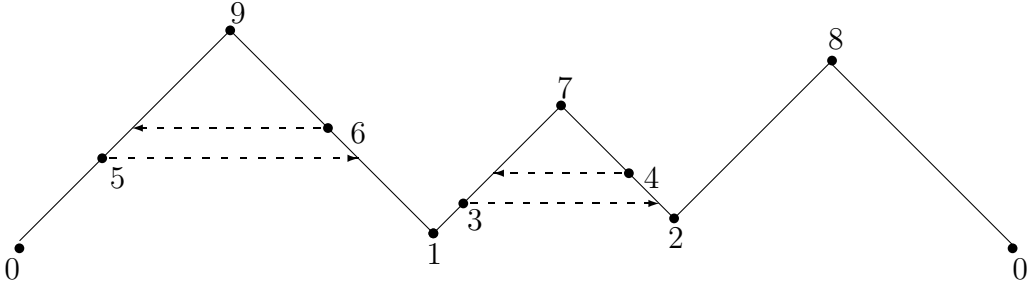
\begin{figure}[htb]
\setlength {\unitlength} {0.8mm}
\begin {picture} (90,40) \setlength {\unitlength} {1mm}
\thinlines
\put(24,8){\dashline{1}(1,0)(25,0)}
\put(50,8){\vector(1,0){0.1}}
\put(-22,14){\dashline{1}(-1,0)(32,0)}
\put(10,14){\vector(1,0){0.1}}
\put(6,18){\dashline{1}(-1,0)(-25,0)}
\put(-20,18){\vector(-1,0){0.1}}
\put(46,12){\dashline{1}(-1,0)(-18,0)}
\put(28,12){\vector(-1,0){0.1}}
\put(-37, -2){$0$}
\put(-35, 2){\circle*{1.3}}
\put(-7,31){\line(-1,-1){28}}
\put(-7,31){\circle*{1.3}}
\put(20,4){\line(-1,1){27}}
\put(-7,32){$9$}
\put(-24,14){\circle*{1.3}}\put(-23,10){$5$}
\put(6,18){\circle*{1.3}}\put(9,16){$6$}
\put(20,4){\circle*{1.3}}
\put(20,4){\circle*{1.3}}\put(19.1,0){$1$}
\put(24,8){\circle*{1.3}}\put(24.5,4.5){$3$}
\put(52,6){\circle*{1.3}}\put(51.2,2){$2$}
\put(20,4){\line(1,1){17}}\put(37,21){\circle*{1.3}}
\put(37,21){\line(1,-1){15}}\put(36.5,22){$7$}
\put(46,12){\circle*{1.3}}\put(48,10){$4$}
\put(52,6){\line(1,1){21}}\put(73,27){\circle*{1.3}}\put(72.5,28.5){$8$}
\put(72.5,27){\line(1,-1){25}}
\put(97, 2){\circle*{1.3}}
\put(97,-2){$0$}
\end{picture}
\caption{MFS-actions on $596137428$ (recall $\pi(0)=\pi(10)=0$)
\label{valhop}}
\end {figure}

See Figure~\ref{valhop} for illustration, where  exchanging
$w_2$ and  $w_3$ in the $x$-factorisation is equivalent to
move $x$  from a double ascent to  a double descent or vice versa. Note that the boundary condition does matter. Take the permutation $596137428$ in Figure~\ref{valhop} as an example.
If $\pi(0)=10$ instead, then $5$ becomes a valley and will be fixed by $\varphi'_5$.

It is clear that $\varphi'_x$'s are involutions and commute. For any subset $S\subseteq[n]$ we can then define the map $\varphi'_S :\S_n\rightarrow\S_n$ by
\begin{align*}
\varphi'_S(\pi)=\prod_{x\in S}\varphi'_x(\pi).
\end{align*}
Hence the group $\Z_2^n$ acts on $\S_n$ via the functions $\varphi'_S$, $S\subseteq[n]$. This action will be called  the {\em Modified Foata--Strehl action} ({\em MFS-action} for short).
\end{Def}

\begin{remark}\label{dual}
The boundary condition $\pi(0)=\pi(n+1)=0$, the definition of $\ai$, and the construction of the MFS-action, are all complement to those used by Lin-Zeng in \cite{LZ15}. When patterns $\{231, 132, \bca, \acb\}$ are concerned, we use Lin-Zeng's version, while for patterns $\{213, 312, \bac, \cab\}$ we use our current version.
\end{remark}
\begin{lemma}\label{lemact}
The statistic $\ai$ is constant on any orbit under the MFS-action.
\end{lemma}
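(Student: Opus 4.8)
The plan is to show that the statistic $\ai$ is invariant under each generating involution $\varphi'_x$ of the MFS-action, since the group $\Z_2^n$ is generated by these, and invariance under each generator forces invariance on the whole orbit. Thus it suffices to fix an arbitrary $x\in[n]$ and verify that $\ai\:\pi = \ai\:\varphi'_x(\pi)$. If $x$ is a valley of $\pi$, then $\varphi'_x$ fixes $\pi$ and there is nothing to prove, so the only case of substance is when $x$ is a peak, a double ascent, or a double descent, where $\varphi'_x$ exchanges the blocks $w_2$ and $w_3$ in the $x$-factorization $\pi = w_1 w_2 x w_3 w_4$.

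First I would set up notation following the $x$-factorization: write $\pi = w_1 w_2 x w_3 w_4$, where every letter of $w_2$ and $w_3$ exceeds $x$, and the letters of $w_1$ and $w_4$ immediately adjacent to this block are smaller than $x$ (by maximality and the boundary convention $\pi(0)=\pi(n+1)=0$). The key observation is that $\varphi'_x$ only permutes the positions of the letters inside $w_2 x w_3$; it leaves $w_1$, $w_4$, and the multiset of values $\{x\}\cup w_2 \cup w_3$ untouched, and it preserves the left-to-right and right-to-left relationships of that block with everything outside it. The strategy is then to classify every potential admissible inversion pair $(\pi(a),\pi(b))$ according to how many of its two entries lie inside the block $w_2 x w_3$, and to check that the admissibility status of each pair is preserved under the swap.

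The key steps in order are: (i) pairs with both entries in $w_1$, both in $w_4$, or one in $w_1$ and one in $w_4$ are clearly unaffected, as are mixed pairs where the outside entry pairs with the block, since all block letters except $x$ have the same value relationship to any fixed outside letter regardless of internal order; here one uses Definition~\ref{ai1} and checks the two admissibility clauses (the ascent condition $\pi(j)<\pi(j+1)$ and the existence of an intermediate $l$ with $\pi(j)>\pi(l)$) separately. (ii) For inversion pairs lying entirely within $w_2 x w_3$, I would argue that since all letters of $w_2\cup w_3$ are larger than $x$, the internal inversions among $w_2\cup w_3$ and their admissibility are governed only by relative order within that set, which the block swap rearranges but whose contribution to $\ai$ I claim is balanced; the cleanest route is to track the role of $x$ itself, noting that $x$ is smaller than all of $w_2,w_3$, so inversions having $x$ as the larger entry are impossible, and inversions having $x$ as the smaller entry, together with the boundary letters below $x$, are exactly what the moving of $x$ from a double descent to a double ascent (or vice versa) controls. (iii) Finally I would assemble these case checks into the equality $\ai\:\pi=\ai\:\varphi'_x(\pi)$.

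The main obstacle I anticipate is the bookkeeping in step (ii): verifying that the swap of $w_2$ and $w_3$ does not change the total count of admissible inversions involving letters strictly between the two blocks, because admissibility depends on the successor $\pi(j+1)$ and on intermediate letters, both of which can shift when $x$ migrates across the block. The delicate point is the pair whose smaller entry sits at the right end of $w_3$ (or at $x$), since its successor in $\pi$ versus in $\varphi'_x(\pi)$ differs, and one must confirm that the second admissibility clause compensates whenever the first clause fails. I expect that a careful comparison of the two admissibility conditions before and after the swap, exploiting that every letter of $w_2\cup w_3$ exceeds $x$ while the flanking letters of $w_1,w_4$ are below $x$, resolves this; this is precisely the kind of positional-to-value translation that makes $\ai$ behave like the block-swap-invariant vincular counts established in Lemma~\ref{ai=pattern}.
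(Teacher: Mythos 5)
Your skeleton coincides with the paper's proof of Lemma~\ref{lemact}: reduce to invariance under each generator $\varphi'_x$, then classify inversion pairs by where their entries sit relative to the $x$-factorization $\pi=w_1w_2xw_3w_4$. But there are two concrete defects. First, a structural point you missed that collapses most of your step~(ii): whenever $\varphi'_x$ acts nontrivially, exactly one of $w_2,w_3$ is empty. If both were nonempty, $x$ would be a valley (fixed by definition of $\varphi'_x$), and if $x$ is a peak then $w_2=w_3=\varnothing$, so $\varphi_x(\pi)=\pi$ and the peak case is just as vacuous as the valley case --- contrary to your listing it among the cases ``of substance.'' So the action is never a genuine exchange of two nonempty blocks; it is the hop of the single letter $x$ across one block of larger letters, and since $\varphi'_x$ is an involution it suffices to treat $x$ a double descent ($w_3=\varnothing$), which is exactly how the paper proceeds. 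In that picture the only internal pairs involving $x$ are $(y,x)$ with $y\in w_2$, and the precise balancing fact --- which you only gesture at with ``I claim is balanced'' --- is that these are inversions of $\pi$ but never \emph{admissible} ones (the successor of $x$ lies below $x$, and every letter between $y$ and $x$ lies in $w_2$, hence above $x$), while after the hop they cease to be inversions altogether.

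Second, the crux you explicitly defer (``I expect that a careful comparison \dots resolves this'') is where the paper does its real work, and it partly sits in a case you declared ``clearly unaffected'' in step~(i). For a mixed pair with $\pi(i)\in w_2x$ and $\pi(j)\in w_4$, admissibility in $\pi$ may be witnessed \emph{only} by the intermediate letter $\pi(l)=x$ via the second clause of Definition~\ref{ai1}, and that witness leaves the interval between $\pi(i)$ and $\pi(j)$ after the hop. The paper repairs this by substituting the first letter $x'$ of $w_4$: since $w_3=\varnothing$ and $x$ is a double descent, $x'<x$, hence $x'<x<\pi(j)<\pi(i)$, and $x'$ lies strictly between $\pi(i)$ and $\pi(j)$ in $\varphi'_x(\pi)$, so it is a valid witness there (the mirrored case, with a pair $(\pi(a),x)$ for $\pi(a)\in w_1$, is rescued symmetrically by the last letter of $w_1$, which is below $x$ by maximality of $w_2$). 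You even recorded the needed ingredient in your setup --- that the letters of $w_1,w_4$ flanking the block are smaller than $x$ --- but never deployed it. Without these witness-substitution arguments the case analysis does not close; with them, your outline completes to essentially the paper's proof.
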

\begin{proof}
Let $\pi\in\S_n$, we aim to show that for each $x\in [n]$, we have $\ai\:\pi=\ai\:\varphi_x'(\pi)$. We discuss by three cases, according to the type of $x$. If $x$ is a peak or a valley of $\pi$, then $\varphi'_x(\pi)=\pi$ and the result is true. If $x$ is a double descent of $\pi$, then the $x$-factorization of $\pi$ is $\pi=w_1w_2xw_3w_4$ with $w_3$ being the empty word, and there are no admissible inversions of $\pi$ formed by $x$ and one letter in $w_2$. As $\varphi'_x(\pi)=w_1w_3xw_2w_4$, there are no inversions of $\varphi'_x(\pi)$ between $w_2$ and $x$. Let $(\pi(i),\pi(j))\notin\{(y, x) : \text{$y$ is a letter in $w_2$}\}$ be a pair in $\pi$ such that $i<j$. We claim that $(\pi(i),\pi(j))$ is an admissible inversion of $\pi$ if and only if it is an  admissible inversion of $\varphi'_x(\pi)$, from which the result follows for this case. Finally, suppose $x$ is a double ascent of $\pi$. Recall that $\varphi_x'$ is an involution, and $x$ is a double ascent of $\pi$ if and only if $x$ is a double descent of $\varphi_x'(\pi)$, so this case follows as well.

Now we prove the claim from last paragraph. For a word $w$, we write $a\in w$ if $a$ is a letter in $w$. To check the claim, we must consider cases depending on whether $\pi(i)$ and $\pi(j)$ belong to $w_1$, $w_2$, $x$, or $w_4$.
 We will show only the case $\pi(i)\in w_2x, \:\pi(j)\in w_4$, other cases are similar. If $(\pi(i),\pi(j))$ is an admissible inversion of $\pi$, then $\pi(i)>\pi(j)<\pi(j+1)$ or $\pi(i)>\pi(j)>\pi(l)$ for some $i<l<j$. Since $\varphi'_x$ does not change the relative order between the letters other than $x$, we see that for the first case and the second case with $\pi(l)\ne x$, $(\pi(i),\pi(j))$ remains an admissible inversion of $\varphi'_x(\pi)$. For the second case with $\pi(l)=x$, we denote $x'$ the first letter of $w_4$. Then $x'<x<\pi(j)<\pi(i)$. Now in view of the triple $(\pi(i),x',\pi(j))$ in $\varphi'_x(\pi)$, we deduce that $(\pi(i),\pi(j))$ is an admissible inversion of $\varphi'_x(\pi)$. To show that, if $(\pi(i),\pi(j))$ is an admissible inversion of $\varphi'(\pi)$ then $(\pi(i),\pi(j))$ is an admissible inversion of $\pi$, is similar and we omit it. This finishes the proof of our claim.
\end{proof}

\begin{lemma}\label{13-2:2-13}
The statistics $(\bca), (\acb), (\bac)$ and $(\cab)$ are constant on any orbit under the MFS-action.
\end{lemma}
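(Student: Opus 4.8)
The plan is to prove that each of the four statistics is preserved by every generator $\varphi'_x$ of the MFS-action, and then to collapse the whole statement to a single case by symmetry. First I would reduce everything to the one claim that $(\cab)$ is constant on orbits of the current (boundary-$0$) MFS-action. Indeed, reverse $\pi\mapsto\pi^r$ fixes the boundary condition $\pi(0)=\pi(n+1)=0$, preserves valleys, and satisfies $\varphi_x(\pi^r)=(\varphi_x(\pi))^r$ (reversing the $x$-factorization $w_1w_2xw_3w_4$ just interchanges the roles of $w_2$ and $w_3$), so it commutes with every $\varphi'_x$ and hence carries orbits to orbits; since $(\cab)\,\pi=(\bac)\,\pi^r$, invariance of $(\cab)$ forces invariance of $(\bac)$. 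By Remark~\ref{dual} the statistics $(\bca)$ and $(\acb)$ are to be read against Lin--Zeng's complementary (boundary-$(n+1)$) version of the action, which is the complement-conjugate of ours; since complement conjugates one action into the other and $(\cab)\,\pi=(\acb)\,\pi^c$, while reverse gives $(\acb)\,\pi=(\bca)\,\pi^r$, the same reasoning transports the conclusion to those two statistics. Thus all four statements follow once $(\cab)$ is handled.

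For the core claim I would argue exactly as in Lemma~\ref{lemact}: it suffices to show $(\cab)\,\pi=(\cab)\,\varphi'_x(\pi)$ for each $x\in[n]$. If $x$ is a peak or a valley then $\varphi'_x(\pi)=\pi$ and there is nothing to check; since $x$ is a double ascent of $\pi$ precisely when it is a double descent of the involution-image $\varphi'_x(\pi)$, it is enough to treat the case where $x$ is a double descent. Writing the $x$-factorization as $\pi=w_1w_2xw_4$ (so $w_3$ is empty), with $w_2=b_1\cdots b_p$ all larger than $x$, $a$ the letter immediately left of $w_2$ and $d$ the letter immediately right of $x$ (whence $a<x$ and $d<x$, by maximality of $w_2$ and the double-descent hypothesis), the action yields $\varphi_x(\pi)=w_1\,x\,b_1\cdots b_p\,w_4$; that is, $x$ hops to the left of the whole block $w_2$.

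Then I would split the count $(\cab)\,\pi$ according to which adjacent descent is used. The main step is to verify that every adjacent descent lying \emph{outside} the edited region has the same number of strictly intermediate letters occurring later in both permutations: the multiset of letters to the right of such a descent changes only through the relocation of $x$, and $x$ can never play the role of the intermediate letter for such a descent, because every letter of $w_2$ exceeds $x$ while $d<x$, so $x$ never falls strictly between the two letters of such a descent. The only genuinely new bookkeeping is in the edited region, where the two descents $(b_p,x)$ and $(x,d)$ of $\pi$ are replaced by the single descent $(b_p,d)$ of $\varphi_x(\pi)$. Counting their admissible later intermediate letters, and using $d<x<b_p$ (so $d\notin(x,b_p)$) together with $x\notin w_4$, gives
\begin{align*}
\#\{y\in w_4: x<y<b_p\}+\#\{y\in w_4\setminus\{d\}: d<y<x\}=\#\{y\in w_4\setminus\{d\}: d<y<b_p\},
\end{align*}
and the right-hand side is exactly the contribution of $(b_p,d)$ in $\varphi_x(\pi)$. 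Hence $(\cab)\,\pi=(\cab)\,\varphi_x(\pi)$.

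I expect the main obstacle to be precisely this bookkeeping: organizing the case distinction cleanly enough to be certain that all adjacent descents away from the block $w_2$ (those internal to $w_1$, those internal to $w_4$, and those internal to $w_2$ itself) are genuinely unaffected, so that the entire discrepancy reduces to the single three-term cancellation above. The rest — the passage from generators to the full group, from double descents to double ascents via the involution property, and from $(\cab)$ to the remaining three statistics via reverse and complement — is routine once this computation is in place.
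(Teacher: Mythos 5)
Your proof is correct, but it takes a genuinely different route from the paper's. The paper disposes of $(\bca)$ and $(\acb)$ (boundary $n+1$) by citing Br\"and\'en's Theorem~5.1, and for $(\bac)$ --- with ``a similar argument'' for $(\cab)$ --- it re-encodes each occurrence of the vincular pattern as a triple $(i,j,k)$ in which $\pi(j)$ is a valley, $\pi(k)$ is a peak, no peak or valley lies strictly between them, and $\pi(j)<\pi(i)<\pi(k)$; invariance is then immediate, since peaks and valleys are fixed by every $\varphi'_x$ and $\pi(i)$ cannot hop across the valley $\pi(j)$. You instead verify invariance of $(\cab)$ generator by generator, and your local computation checks out: for a double descent $x$ the two descents $(b_p,x)$ and $(x,d)$ merge into $(b_p,d)$, your three-term identity holds because $d<x<b_p$ and $x\notin w_4$, and $x$ --- being smaller than every letter of $w_2$ --- can never be the intermediate letter of an untouched descent, while descents inside $w_1$ and $w_4$ see the same set of later letters before and after the move. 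Your symmetry reductions are also sound: one verifies directly that $\varphi_x(\pi^r)=(\varphi_x(\pi))^r$ and that reverse preserves valleys under the symmetric boundary condition, so reverse commutes with the action and $(\cab)\:\pi=(\bac)\:\pi^r$ transports the conclusion; likewise complement conjugates the boundary-$0$ action into Lin--Zeng's boundary-$(n+1)$ version (this is exactly the content of Remark~\ref{dual}), and $(\cab)\:\pi=(\acb)\:\pi^c$ together with $(\acb)\:\pi=(\bca)\:\pi^r$ yields the remaining two statistics. The trade-off: the paper's recoding makes invariance a one-liner once the pair--triple bijection is established, but leans on Br\"and\'en for two of the four statistics; your argument is fully self-contained (it re-derives those cases from scratch) at the price of the bookkeeping, whose only loose end is the harmless boundary case $w_4=\varnothing$, where $d$ is the virtual letter $0=\pi(n+1)$, the ``descents'' $(x,d)$ and $(b_p,d)$ are not counted by $(\cab)$ at all, and all three counts in your identity vanish --- worth one explicit sentence in a final write-up.
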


\begin{proof}
For $\pi\in \S_n$, when $\pi(0)=\pi(n+1)=n+1$, the cases $(\bca)$ and $(\acb)$ were proved by Br\"and\'en~\cite[Theorem 5.1]{Bra08}. For the case $(\bac)$, let $\pi(0)=\pi(n+1)=0$, and note that
\begin{align*}
(\bac)\:\pi &= \#\{(i,j):\: 1\le i<j<n, \pi(j)<\pi(i)<\pi(j+1)\}\\
&=\#\{(i,j,k):\: 1\le i<j<k\le n, \pi(j)<\pi(i)<\pi(k), \pi(j) \text{ is a valley},\\
&\phantom{==}  \pi(k) \text{ is a peak}, \pi(l) \text{ is neither a valley nor a peak, for }j<l<k\}.
\end{align*}
Here the first equality is by definition. To see the second equality, suppose we are given a pair of indices $(i,j)$ that satisfies the conditions in the first line. Starting with $j$, search to the left looking for the largest valley, say $\pi(j')$, then $i<j'\le j$. Starting with $j+1$, search to the right looking for the smallest peak, say $\pi(k)$, then $j+1\le k\le n$. Now clearly $(i,j',k)$ forms a triple that is counted by the second and third lines above. Conversely, given such a triple, we can uniquely find a pair that is counted by the first line. Hence these two sets are in bijection. Finally, the number of these triples is invariant under the action since $\pi(j)$ and $\pi(k)$ cannot move and neither can $\pi(i)$ hop over the valley $\pi(j)$. A similar argument leads to the case $(\cab)$.
\end{proof}

\begin{lemma}\label{Mfspre}
The MFS-action $\varphi'_S$ is closed on the subsets $\S_n(\tau)$, for $\tau=213,312,132,231$.
\end{lemma}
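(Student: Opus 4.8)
The plan is to reduce everything to a single generator and a single pattern. Since $\varphi'_S=\prod_{x\in S}\varphi'_x$ is a composition of the commuting involutions $\varphi'_x$, it suffices to prove that each $\varphi'_x$ maps $\S_n(\tau)$ into itself; as $\varphi'_x$ is an involution on $\S_n$, closure $\varphi'_x(\S_n(\tau))=\S_n(\tau)$ follows automatically. Next I would cut down the number of patterns by symmetry. The reverse map satisfies $\varphi'_x(\pi^r)=(\varphi'_x(\pi))^r$ (reversing only interchanges the blocks $w_2$ and $w_3$ of the $x$-factorization, which $\varphi_x$ swaps anyway, and it preserves valleys), and it carries $\S_n(213)$ onto $\S_n(312)$; likewise, in view of Remark~\ref{dual}, the complement map conjugates the present MFS-action to the Lin--Zeng version used for $\{132,231\}$ and carries $\S_n(213),\S_n(312)$ onto $\S_n(231),\S_n(132)$. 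Hence it is enough to treat $\tau=213$ under our action.

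The second ingredient is a structural simplification of $\varphi'_x$. If $x$ is a peak or a valley, then $\varphi'_x$ fixes $\pi$ and there is nothing to prove, so I may assume $x$ is a double ascent or a double descent. In either case exactly one of $w_2,w_3$ is empty (both nonempty would force $x$ to be a valley), so $\varphi'_x=\varphi_x$ merely hops $x$ over a single maximal block $B$ of entries all larger than $x$, from one side of $B$ to the other. The key point I would record is that this hop preserves the relative order of every pair of entries except the pairs $(x,b)$ with $b\in B$: indeed $x$ and $B$ each stay after $w_1$ and before $w_4$ in both $\pi$ and $\varphi_x(\pi)$. I would also note the maximality consequence that the entry immediately beyond $B$, on the side away from $x$, is smaller than $x$.

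The core argument is by contraposition: assuming $\sigma:=\varphi_x(\pi)$ contains an occurrence of $213$, I produce one in $\pi$. If the occurrence avoids $x$, then its three entries keep their pairwise order in $\pi$ and form the same $213$ there. If the occurrence uses $x$, I split according to the value-rank of $x$ within the triple. When $x$ is the middle value (the ``$2$'') or the largest value (the ``$3$''), the two remaining entries lie on the correct side of $x$ in $\pi$ as well --- for the ``$3$'' case because those entries are smaller than $x$ and so cannot lie in $B$ --- and the occurrence transfers verbatim.

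The only delicate case is when $x$ is the smallest value (the ``$1$'', in the middle position) and one of the two larger entries of the occurrence has been carried across $x$ by the hop, i.e.\ lies in $B$. Here the original triple fails in $\pi$, so I would instead build a replacement $213$ using the boundary entry $f$ lying immediately beyond $B$: since $f<x$ by maximality of $B$, the triple formed by the $B$-entry (as the ``$2$''), $f$ (as the new ``$1$''), and the larger entry from $w_4$ (as the ``$3$'') is increasing in position and forms $213$ in $\pi$; the double-descent case is symmetric, with $f$ replaced by the last entry of $w_1$. This boundary-letter substitution is the heart of the argument, and checking it is the main obstacle: one must verify that the carried-across entry and the sub-$x$ boundary entry really occupy the required positions, which is exactly where the maximality of $B$ and the fact that $x$ sits strictly between $w_1$ and $w_4$ are used.
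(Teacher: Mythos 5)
Your proof is correct, but it takes a genuinely different route from the paper's. The paper disposes of this lemma in one line: by Lemma~\ref{231-321}, $\pi\in\S_n(213)$ if and only if $(\bac)\:\pi=0$ (and similarly for the other three patterns and their vincular counterparts), and by Lemma~\ref{13-2:2-13} the statistics $(\bac)$, $(\cab)$, $(\acb)$, $(\bca)$ are constant on orbits of the MFS-action; hence the zero locus of each statistic, i.e.\ each $\S_n(\tau)$, is a union of orbits. You instead verify closure directly: after reducing to a single generator $\varphi'_x$ and, via the reverse and complement symmetries (the latter licensed by Remark~\ref{dual}), to the single pattern $213$, you track how an occurrence of $213$ in $\varphi_x(\pi)$ pulls back to $\pi$. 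I checked your case analysis: the transfers when $x$ plays the ``2'' or the ``3'' are valid (in the ``2''/double-descent direction one needs the small extra observation that the ``1'' entry lies in $w_4$ and hence forces the ``3'' entry into $w_4$ as well, which your phrasing glosses over but which is immediate), and in the delicate case where $x$ plays the ``1'' and a larger entry has been carried across, your boundary-letter substitution works: the sub-$x$ letter $f$ (resp.\ the last letter of $w_1$) exists precisely in the sub-cases where you invoke it, since $c\in w_4$ (resp.\ $u\in w_1$) forces the relevant block to be nonempty, and maximality of the hopped block gives $f<x$, so the triple $(u,f,c)$ (resp.\ $(u,g,c)$) is an honest $213$ in $\pi$. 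What the paper's route buys is brevity and strictly more information---the full vincular statistics, not merely their vanishing, are orbit invariants, which is what the $q$-$\gamma$-expansions need anyway; note that the paper's proof of Lemma~\ref{13-2:2-13} hides a valley/peak boundary analysis of much the same flavor as yours. What your route buys is self-containedness: it uses neither the Claesson-type equivalences $\S_n(\bac)=\S_n(213)$, etc., nor the invariance lemma, and it isolates exactly where the maximality of the block and the boundary convention $\pi(0)=\pi(n+1)=0$ enter.
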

\begin{proof}
This follows directly from Lemmas \ref{231-321} and \ref{13-2:2-13}.
\end{proof}

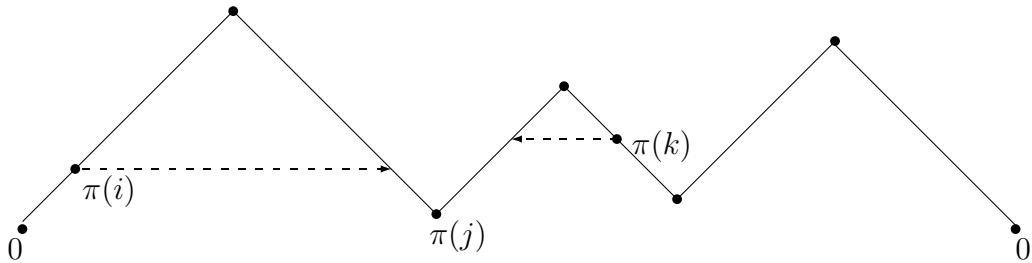
\begin{figure}[htb]
\setlength {\unitlength} {0.8mm}
\begin{picture} (90,40) \setlength {\unitlength} {1mm}
\thinlines

\put(-26,10){\dashline{1}(-1,0)(39,0)}
\put(14,10){\vector(1,0){0.1}}


\put(45,14){\dashline{1}(-1,0)(-13,0)}
\put(30,14){\vector(-1,0){0.1}}

\put(-37, -2){$0$}
\put(-35, 2){\circle*{1.3}}
\put(-7,31){\line(-1,-1){28}}
\put(-7,31){\circle*{1.3}}
\put(20,4){\line(-1,1){27}}
\put(-28,10){\circle*{1.3}}\put(-27,6){$\pi(i)$}
\put(20,4){\circle*{1.3}}\put(19.1,0){$\pi(j)$}
\put(52,6){\circle*{1.3}}
\put(20,4){\line(1,1){17}}\put(37,21){\circle*{1.3}}
\put(37,21){\line(1,-1){15}}
\put(44,14){\circle*{1.3}}\put(46,12){$\pi(k)$}
\put(52,6){\line(1,1){21}}\put(73,27){\circle*{1.3}}
\put(72.5,27){\line(1,-1){25}}
\put(97, 2){\circle*{1.3}}
\put(97,-2){$0$}
\end{picture}
\caption{MFS-actions on pattern avoidance $213$
\label{val213}}
\end{figure}
\begin{proof}[Proof of Theorem \ref{thm:des-ai-qgamma}]
By Lemma~\ref{13-2:2-13}, the statistics tracked by the power of $q$ remain constant inside each orbit under the MFS-action. We first prove the $213$-avoiding case in \eqref{eq:213:312}. For any  permutation $\pi\in\S_{n}$, let $\Orb(\pi)=\{g(\pi): g\in\Z_2^{n}\}$ be the orbit of $\pi$ under the MFS-action. The MFS-action divides the set $\S_{n}$ into disjoint orbits. Moreover, for $\pi\in \S_{n}$, $x$ is a double descent (resp. double ascent) of $\pi$ if and only if $x$ is a double ascent (resp. double descent) of $\varphi'_x(\pi)$. A double descent (resp. double ascent) $x$ of $\pi$ remains a double descent (resp. double ascent) of $\varphi'_y(\pi)$ for any $y\neq x$. Hence, there is a unique permutation in each orbit which has no double descent. Graphically, this is the permutation all of whose active dots hop to the left. Now, let $\bar\pi$ be this unique element in $\Orb(\pi)$, then $\da\:\bar{\pi}=n-\peak\:\bar{\pi}-\valley\:\bar{\pi}$ and $\des\:\bar{\pi}=\peak\:\bar{\pi}-1=\valley\:\bar{\pi}$. And for any other $\pi'\in\Orb(\pi)$, it can be obtained from $\bar{\pi}$ by repeatedly applying $\varphi'_x$ for some double ascent $x$ of $\bar{\pi}$. Each time this happens, $\des$ increases by $1$ and $\da$ decreases by $1$. Thus
\begin{align*}
\sum_{\sigma\in\Orb\:\pi}q^{(\cab)\:\sigma}t^{\des\:\sigma}=
q^{(\cab)\:\bar{\pi}}t^{\des\:\bar{\pi}}(1+t)^{\da\:\bar{\pi}}
=q^{(\cab)\:\bar{\pi}}t^{\des\:\bar{\pi}}(1+t)^{n-2\des\:\bar{\pi}-1}.
\end{align*}
According to Lemma \ref{Mfspre}, by summing over all the orbits that compose together to form $\S_n(213)$, we obtain the $213$-avoiding case \eqref{eq:213:312} immediately. The proofs of the remaining three cases are quite analogous, although one should keep in mind that for the $231$-avoiding and $132$-avoiding cases in \eqref{eq:231:132}, we use Lin-Zeng's version of MFS-action and the initial condition $\pi(0)=\pi(n+1)=n+1$ instead.
\end{proof}
\begin{proof}
[Proof of Theorem \ref{thm:des-ai-qgamma-new}] Clearly the reverse-complement transformation $\pi\mapsto \pi^{rc}$ satisfies $(\des, 213, \ai)\:\pi=(\des, 132, \ai^*)\:\pi^{rc}$, which yields \eqref{213--132} directly. With Lemma \ref{lemact} and Lemma \ref{Mfspre}, we obtain \eqref{213--ai} and \eqref{132--ai} via the MFS-action in a similar fashion as in the proof of Theorem~\ref{thm:des-ai-qgamma}.
\end{proof}

\begin{lemma}\label{213ai}
All inversions in a down-up permutation of odd length are admissible. Moreover, for $\pi\in \widetilde{\S}_{2n+1, n}(213)$, we have
\begin{align}
\label{sum}
\ai\:\pi+\ai\:\pi^r &= 2n^2+n,\\
\label{adi=2}
\ai\:\pi &=2\cdot(3\text{-}12)\:\pi, \\
\label{adir}
\ai\:\pi^r &=(\cab)\:\pi.
\end{align}
\end{lemma}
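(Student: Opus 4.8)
The plan is to show first that every $\pi\in\widetilde{\S}_{2n+1,n}(213)$ is a \emph{down-up} permutation, so that the opening assertion applies to both $\pi$ and $\pi^r$ and reduces all three identities to statements about $\inv$, which can then be matched with the relevant pattern counts by two explicit pairings. I would begin by recording that the two conditions $\des\:\pi=n$ and $\dd\:\pi=0$ on a permutation of length $2n+1$ already force the down-up shape. With the boundary convention $\pi(0)=\pi(2n+2)=0$ governing $\dd$, the last entry $\pi(2n+1)$ can never be a double descent, so $\pi(2n)<\pi(2n+1)$; hence the $n$ descents lie among positions $1,\dots,2n-1$, and since $\dd\:\pi=0$ forbids two consecutive descents they must occupy the unique maximum independent set $\{1,3,\dots,2n-1\}$ of the path on those positions. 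Thus descents sit exactly at the odd positions, i.e.\ $\pi$ is down-up; and because the alternation pattern $>,<,\dots,>,<$ of an odd-length down-up permutation is palindromic, $\pi^r$ is down-up as well.

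Next I would prove the opening claim that in any down-up permutation $\sigma$ of odd length every inversion $(\sigma(i),\sigma(j))$ is admissible, by splitting on the parity of $j$. If $j$ is even it is an ascent bottom, so $\sigma(j)<\sigma(j+1)$ and the first clause of Definition~\ref{ai1} applies. If $j$ is odd then $j\ge 3$ and $\sigma(j)$ is a peak, so $\sigma(j-1)<\sigma(j)$; here $i\ne j-1$, since $\sigma(j-1)<\sigma(j)$ would contradict $\sigma(i)>\sigma(j)$, so $l:=j-1$ satisfies $i<l<j$ and $\sigma(l)<\sigma(j)$, giving the second clause. Oddness is exactly what makes the final position a peak rather than a descent bottom, so no even index is ever last. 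It follows that $\ai\:\pi=\inv\:\pi$ and $\ai\:\pi^r=\inv\:\pi^r$, and since a pair of positions is an inversion of precisely one of $\pi,\pi^r$ we obtain $\ai\:\pi+\ai\:\pi^r=\inv\:\pi+\inv\:\pi^r=\binom{2n+1}{2}=2n^2+n$, which is \eqref{sum}.

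For \eqref{adi=2} I would build a two-to-one map from inversions onto occurrences of $3\text{-}12$. Every such occurrence has its ascending adjacent pair at an even--odd slot $(2t,2t+1)$ with a larger entry $\pi(i)$ to the left, and it carries the two inversions $(\pi(i),\pi(2t))$ and $(\pi(i),\pi(2t+1))$; conversely an arbitrary inversion targets either a valley (even position) or a peak (odd position), and $213$-avoidance pins down its role: if the target is a valley $\pi(2t)$ then $\pi(i)>\pi(2t+1)$, as otherwise $\pi(2t)<\pi(i)<\pi(2t+1)$ would be a $213$, while if the target is a peak $\pi(2t+1)$ then $i<2t$ automatically. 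This yields $\ai\:\pi=\inv\:\pi=2\cdot(3\text{-}12)\:\pi$. For \eqref{adir} I would instead use the index shift $(i,j)\mapsto(i+1,j)$, which I claim is a bijection from occurrences of $\cab$ to non-inversions of $\pi$ (whose number is $\binom{2n+1}{2}-\inv\:\pi=\inv\:\pi^r$). Its inverse sends a non-inversion $(i',k')$ to $(i'-1,k')$, and this is well defined because $213$-avoidance forces $\pi(1)=2n+1$ (so $i'\ge 2$), forces $i'$ to be even (a peak $i'$ would create a $213$), and forces $\pi(i'-1)>\pi(k')$; hence $\ai\:\pi^r=\inv\:\pi^r=(\cab)\:\pi$.

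The principal obstacle is the bookkeeping in these last two correspondences: one must verify that the down-up alternation together with the absence of $213$ makes the first map exactly two-to-one and the second exactly one-to-one, with no inversion (resp.\ non-inversion) double-counted or omitted. The recurring mechanism that rigidifies every count is that any deviation would exhibit a forbidden $213$ pattern, so $213$-avoidance is doing essentially all of the combinatorial work.
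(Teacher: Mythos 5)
Your proposal is correct, and for most of the lemma it coincides with the paper's own argument: the forced down-up shape of elements of $\widetilde{\S}_{2n+1,n}(213)$ (the paper also records $\pi(1)=2n+1$, which you likewise need and assert), the peak/valley case split showing every inversion of an odd-length down-up permutation is admissible, the complementation identity $\inv\:\pi+\inv\:\pi^r=\binom{2n+1}{2}$ giving \eqref{sum}, and the two-to-one correspondence between inversions and occurrences of $3\text{-}12$ giving \eqref{adi=2} — the paper phrases this last one as a $1$-to-$2$ map from triples to inversions, but the content is identical, including the use of $213$-avoidance to force $\pi(i)>\pi(2t+1)$ when the inversion targets a valley. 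The one genuine divergence is \eqref{adir}: the paper simply observes that $\pi^r\in\S_n(312)$ and invokes Lemma~\ref{ai=pattern} to get $\ai\:\pi^r=(\bac)\:\pi^r=(\cab)\:\pi$, an identity valid for \emph{every} $\pi\in\S_n(213)$, not just the alternating ones; you instead prove the complement identity $(\cab)\:\pi=\binom{2n+1}{2}-\inv\:\pi$ directly via the index shift $(i,j)\mapsto(i+1,j)$ between $\cab$-occurrences and non-inversions, so that \eqref{adir} becomes a corollary of \eqref{sum}. Your verification of the inverse map is sound: $\pi(1)=2n+1$ rules out $i'=1$; an odd $i'$ (a peak) would produce the $213$-pattern $(\pi(i'),\pi(i'+1),\pi(k'))$ (note $k'\neq i'+1$ since $\pi(i'+1)<\pi(i')<\pi(k')$); and for even $i'$ (a valley), $\pi(i'-1)<\pi(k')$ would produce the $213$-pattern $(\pi(i'-1),\pi(i'),\pi(k'))$. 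What the paper's route buys is brevity and generality (its proof of \eqref{adir} needs no alternating structure at all); what yours buys is a self-contained argument within the down-up setting, plus the explicit non-inversion count as a by-product — a fair trade, though you should flesh out the one-line claim ``$213$-avoidance forces $\pi(1)=2n+1$'' to match the rigor of the rest.
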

\begin{proof}
Assume $\pi\in\S_{2n+1}$ is a down-up permutation, and $(\pi(i),\pi(j))$ is an inversion pair. If $j=2n+1$ or $\pi(j)$ is a peak, then $\pi(j-1)<\pi(j)$. Otherwise $\pi(j)$ must be a valley and $j<2n+1$, then we see $\pi(j)<\pi(j+1)$. In both cases, we see that $(\pi(i),\pi(j))$ is indeed admissible. So we have $\inv\:\pi=\ai\:\pi$.

Now suppose $\pi\in \widetilde{\S}_{2n+1, n}(213)$, the initial condition $\pi(2n+2)=0$ and the restriction of having no double descents force $\pi(2n)<\pi(2n+1)$. Since $\pi$ has exactly $n$ descents and no double descents, its descents and ascents must alternate. Moreover, if $2n+1$ is not the first letter of $\pi$ then it will cause a pattern of $213$. In summary, $\pi$ must be a down-up permutation with the first letter being $2n+1$. Also note that the reversal of a down-up permutation of odd length is still a down-up permutation. Therefore we have
$$\ai\:\pi+\ai\:\pi^r = \inv\:\pi+\inv\:\pi^r=\binom{2n+1}{2},$$
where the last equality relies on the simple fact that any pair of letters in $\pi$ is either an inversion pair for $\pi$, or an inversion pair for $\pi^r$. This proves \eqref{sum}.

To get \eqref{adi=2}, we construct a $1\text{-to-}2$ map from triples that form $3\text{-}12$ patterns in $\pi$, to inversion pairs of $\pi$. Suppose $(\pi(i),\pi(j),\pi(j+1))$ is such a triple, then it is mapped to two inversion pairs, namely $(\pi(i),\pi(j))$ and $(\pi(i),\pi(j+1))$. This map is well-defined. Since $\pi$ is down-up, each inversion pair can only be the image under this map for at most one triple. Now it will suffice to show that all inversion pairs in $\pi$ arise in this way. Suppose $(\pi(i),\pi(j))$ is an inversion pair. If $\pi(j)$ is a peak, then $(\pi(i),\pi(j-1),\pi(j))$ forms a $3\text{-}12$ pattern. Otherwise $\pi(j)$ must be a valley, so $\pi(j)<\pi(j+1)$. Now if $\pi(j+1)>\pi(i)$, $(\pi(i),\pi(j),\pi(j+1))$ will form a $213$ pattern which should be avoided by $\pi$. So we must have $\pi(j+1)<\pi(i)$ and $(\pi(i),\pi(j),\pi(j+1))$ forms a $3\text{-}12$ pattern.

Finally for \eqref{adir}, simply note that if $\pi\in\S_n(213)$, then $\pi^r\in\S_n(312)$. Then by Lemma~\ref{ai=pattern} we get $\ai\:\pi^r=(\bac)\:\pi^r=(\cab)\:\pi$ and the proof is completed.
\end{proof}

With the above lemma we obtain another combinatorial interpretation of $C_{n}(q)$.

\begin{proposition}
For any $n\geq 1$,
\begin{align*}
C_{n}(q)=\sum_{\pi\in\widetilde{\S}_{2n+1, n}(213)}q^{n^2-(3\text{-}12)\:\pi}
\end{align*}
\end{proposition}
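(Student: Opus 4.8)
The plan is to extract the leading $\gamma$-coefficient $\gamma_{2n+1,n}(q)$ from the expansion of Theorem~\ref{thm:des-ai-qgamma} and to evaluate it in two independent ways. First I would specialize the palindromic variable by setting $t=-1$ in \eqref{qgamma-unify} applied to length $2n+1$: since the factor $(1+t)^{2n-2k}$ vanishes at $t=-1$ for every $k<n$, only the top term $k=n$ survives, giving
\begin{align*}
C_{2n+1}(-1,q)=(-1)^n\gamma_{2n+1,n}(q).
\end{align*}
On the other hand, the $213$-avoiding interpretation \eqref{eq:213:312} identifies $\gamma_{2n+1,n}(q)=\sum_{\pi\in\widetilde{\S}_{2n+1,n}(213)}q^{(\cab)\:\pi}$, which is precisely the sum we wish to understand.

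The second step is to evaluate $C_{2n+1}(-1,q)$ independently from the odd case of Theorem~\ref{q-nara}. Since $2n+1$ is odd, \eqref{q-nara-odd} gives $C_{2n+1}(-1,q)=(-q)^nC_n(q^2)$. Comparing the two expressions for $C_{2n+1}(-1,q)$ and cancelling the common sign $(-1)^n$ yields the clean identity
\begin{align*}
\sum_{\pi\in\widetilde{\S}_{2n+1,n}(213)}q^{(\cab)\:\pi}=\gamma_{2n+1,n}(q)=q^nC_n(q^2).
\end{align*}

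The final step is a change of statistic via Lemma~\ref{213ai}. For each $\pi\in\widetilde{\S}_{2n+1,n}(213)$, combining \eqref{sum}, \eqref{adi=2} and \eqref{adir} gives $(\cab)\:\pi=\ai\:\pi^r=(2n^2+n)-\ai\:\pi=2n^2+n-2\cdot(3\text{-}12)\:\pi$. Substituting this into the identity above and cancelling the factor $q^n$ from both sides produces
\begin{align*}
C_n(q^2)=\sum_{\pi\in\widetilde{\S}_{2n+1,n}(213)}(q^2)^{n^2-(3\text{-}12)\:\pi}.
\end{align*}
Both sides are polynomials in the single quantity $q^2$, so this is an identity of polynomials in $q^2$; replacing $q^2$ by $q$ then delivers the claimed formula. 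I do not anticipate a genuine obstacle, since the combinatorial content is already packaged in Lemma~\ref{213ai} and in the $\gamma$-expansion of Theorem~\ref{thm:des-ai-qgamma}. The only points demanding care are the sign and parity bookkeeping in the $t=-1$ specialization (ensuring exactly the $k=n$ term is isolated), and justifying the substitution $q^2\mapsto q$ by observing that every exponent appearing on either side is even.
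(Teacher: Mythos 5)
Your proposal is correct and follows essentially the same route as the paper: specializing the $\gamma$-expansion of Theorem~\ref{thm:des-ai-qgamma} (interpretation \eqref{eq:213:312}) at $t=-1$ to isolate the top coefficient, evaluating $C_{2n+1}(-1,q)=(-q)^nC_n(q^2)$ via \eqref{q-nara-odd}, and converting $(\cab)$ to $(3\text{-}12)$ through Lemma~\ref{213ai}. Your explicit attention to the substitution $q^2\mapsto q$ makes a step the paper leaves implicit precise, but the argument is the same.
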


\begin{proof}
Using the \#5 interpretation for $C_n(t,q)$ and making the same argument as we did in the proof of Theorem~\ref{thm:des-ai-qgamma} concerning the MFS-action, we have
\begin{align*}
C_{2n+1}(t,q)&=\sum_{\pi\in\S_{2n+1}(213)}t^{\des\:\pi}q^{(31\text{-}2)\:\pi}
=\sum_{k=0}^{n}\biggl(\:\sum_{\pi\in\widetilde{\S}_{2n+1,k}(213)}q^{(31\text{-}2)\:\pi}\biggr)t^k(1+t)^{2n-2k}.
\end{align*}
We take $t=-1$ in the above equation and apply \eqref{q-nara-odd} for the left-hand side to get
$$
C_{2n+1}(-1,q)=(-q)^nC_{n}(q^2).
$$
For the right-hand side, only the term with $k=n$ remains. Equating this with the left-hand side and use Lemma~\ref{213ai} to get the desired expression for $C_n(q)$.
\end{proof}

\section{\texorpdfstring{A variant of $q$-Narayana polynomials}{}}\label{sec4: var-q-nara}
\begin{Def}\label{c-stat}
For $\pi\in\S_n$, a value $x=\pi(i)$ ($i\in[n]$) is called
\begin{itemize}
\item a cyclic valley if $i=\pi^{-1}(x)>x$ and $x<\pi(x)$;
\item a double excedance if $i=\pi^{-1}(x)<x$ and $x<\pi(x)$;
\item a drop if $x=\pi(i)<i$.
\end{itemize}
\end{Def}
Let $\cvalley$ (resp. $\cda$, $\drop$) denote the number of cyclic valleys (resp. double excedances, drops) in $\pi$. The following result is due to
Shin-Zeng\cite[Theorem~5]{SZ12}.
\begin{lemma}[Shin-Zeng]\label{SZ12_lem}
There is a bijection $\Upsilon$ on $\S_n$ such that for all $\pi\in \S_n$,
\begin{align*}
(\nest, \cros, \drop, \cda, \cdd, \cvalley, \fix)\:\pi=  (\bca, \cab, \des, \da-\fmax, \dd, \valley, \fmax)\:\Upsilon(\pi),
\end{align*}
where the linear statistics on the right-hand side are defined
with the convention $\pi(0)=0$ and $\pi(n+1)=n+1$ for $\pi\in \S_n$.
\end{lemma}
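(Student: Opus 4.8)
The plan is to realize $\Upsilon$ as a composition of two classical encodings of permutations by labelled Motzkin paths (Laguerre histories), in the same spirit as the bijection $\varPhi$ underlying Lemma~\ref{quintuple:des-exc}. The seven statistics on the left are governed by the \emph{cyclic} structure of $\pi$, so I would first apply the Foata--Zeilberger bijection \cite{FZ}, processing the values $1,2,\dots,n$ and recording, for each $x$, its type relative to $\pi^{-1}(x)$ and $\pi(x)$: a cyclic valley gives a rise, a cyclic peak a fall, a double excedance and a cyclic double descent two differently coloured level steps, and a fixed point a level step at height $0$. Each fall and each level step carries a label in $\{0,1,\dots,h\}$, with $h$ the current height, and the standard analysis shows that these labels split into two families whose sums are exactly $\cros\:\pi$ and $\nest\:\pi$, while the numbers of steps of each type record $\cvalley$, $\cda$, $\cdd$, $\fix$, and, via $\drop=\cvalley+\cdd$, the statistic $\drop$.

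Next I would read the resulting labelled path backwards through the Fran\c con--Viennot bijection \cite{FV}. For a permutation $\sigma$ scanned linearly under the boundary convention $\sigma(0)=0$, $\sigma(n+1)=n+1$, its valleys, peaks, double ascents and double descents produce rises, falls and the two colours of level steps, the labels on falls and level steps sum to the vincular statistics $(\bca)\:\sigma$ and $(\cab)\:\sigma$, and the numbers of steps of each type encode $\valley$, $\des=\peak+\dd$, $\dd$, $\da$, together with $\fmax$ through the level steps sitting at the running maximum. Setting $\Upsilon:=\Psi_{\mathrm{FV}}^{-1}\circ\Psi_{\mathrm{FZ}}$ and matching the two labelling schemes step by step then delivers the asserted seven-fold equidistribution; this is precisely the content of \cite[Theorem~5]{SZ12}, whose apparatus we may quote.

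The crux, and the only genuinely delicate point, is bookkeeping: one must fix the conventions so that all seven statistics agree \emph{simultaneously}. Two matchings deserve care. First, the pair $(\cros,\nest)$ is sensitive to the direction in which a label is read against the currently open arcs, so the Foata--Zeilberger labels must be normalized so that small labels become crossings and large labels become nestings, consistently with the way the Fran\c con--Viennot labels encode $\cab$ versus $\bca$. Second, the term $\da-\fmax$ on the right is not itself a step count: under the encoding the fixed points of $\pi$ correspond to the foremaxima of $\sigma$, yielding $\fix\:\pi=\fmax\:\Upsilon(\pi)$, so that the double excedances of $\pi$ match exactly the non-foremaximal double ascents, i.e. $\da-\fmax$. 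Keeping the boundary values $\pi(0)=0$, $\pi(n+1)=n+1$ fixed throughout is what makes these fixed-point/foremaximum identifications come out right; the remaining equalities $\cvalley\leftrightarrow\valley$, $\cdd\leftrightarrow\dd$ and $\drop\leftrightarrow\des$ then follow from the elementary fact that a Motzkin path has equally many rises and falls, forcing $\peak=\valley$.
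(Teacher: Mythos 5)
Your proposal matches the situation in the paper exactly: the authors give no proof of this lemma, quoting it directly from Shin--Zeng \cite[Theorem~5]{SZ12}, and your sketch reconstructs precisely the route of that original proof (the Foata--Zeilberger and Fran\c con--Viennot encodings by labelled bicolored Motzkin paths, composed and matched step by step), including the two genuinely delicate identifications $\fix\:\pi=\fmax\:\Upsilon(\pi)$ and $\cda\:\pi=(\da-\fmax)\:\Upsilon(\pi)$ and the decompositions $\drop=\cvalley+\cdd$ versus $\des=\valley+\dd$. One small slip worth correcting if you ever flesh this out: in the Foata--Zeilberger encoding a fixed point is a level step at the \emph{current} height (the number of open arcs), not at height $0$ --- its label there is exactly what records the nestings over fixed points, as reflected by the weight $yp^hs^{2h}$ in Lemma~\ref{Lemma:Shin-Zeng} --- but this does not affect the overall argument, which in any case you may legitimately conclude by citing \cite[Theorem~5]{SZ12}, as the paper itself does.
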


\begin{theorem}\label{HZthwex}
we have
\begin{align*}
W_n(&t, q):=\sum_{\pi\in\S_n(321)}t^{\wex\:\pi}q^{\inv\:\pi}\\
&=t^n\sum_{\pi\in\S_n(231)}(q/t)^{\des\:\pi}q^{(\cab)\:\pi}=t^n\sum_{\pi\in\S_n(231)}(q/t)^{\des\:\pi}q^{(\acb)\:\pi}=t^n\sum_{\pi\in\S_n(231)}(q/t)^{\des\:\pi}q^{\ai^*\:\pi}\\
&=t^n\sum_{\pi\in\S_n(312)}(q/t)^{\des\:\pi}q^{(\bca)\:\pi}=t^n\sum_{\pi\in\S_n(312)}(q/t)^{\des\:\pi}q^{(\bac)\:\pi}=t^n\sum_{\pi\in\S_n(312)}(q/t)^{\des\:\pi}q^{\ai\:\pi}\\
&=t^n\sum_{\pi\in\S_n(213)}(q/t)^{\des\:\pi}q^{(\cab)\:\pi}=t^n\sum_{\pi\in\S_n(213)}(q/t)^{\des\:\pi}q^{(\acb)\:\pi}\\
&=t^n\sum_{\pi\in\S_n(132)}(q/t)^{\des\:\pi}q^{(\bca)\:\pi}=t^n\sum_{\pi\in\S_n(132)}(q/t)^{\des\:\pi}q^{(\bac)\:\pi}.
\end{align*}
\end{theorem}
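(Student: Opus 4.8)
The guiding observation is that every expression on the right-hand side of the theorem is simply $t^n$ times one of the ten interpretations of $C_n(q/t,q)$ furnished by Theorem~\ref{qnara}. Concretely, replacing $t$ by $q/t$ in an identity $C_n(t,q)=\sum_{\pi\in\S_n(\tau)}t^{\des\,\pi}q^{\stat\,\pi}$ from Table~\ref{ten} and multiplying by $t^n$ yields $t^nC_n(q/t,q)=t^n\sum_{\pi\in\S_n(\tau)}(q/t)^{\des\,\pi}q^{\stat\,\pi}$, and the ten such right-hand sides are exactly the ten summations displayed in the theorem. Thus the whole statement reduces to the single identity $W_n(t,q)=t^nC_n(q/t,q)$, after which Theorem~\ref{qnara} delivers all ten equalities at once. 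My plan is therefore to prove this one identity and then invoke Theorem~\ref{qnara}.

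I would establish $W_n(t,q)=t^nC_n(q/t,q)$ at the level of generating functions. First, setting $y=t$ in the Cheng et al. formula \eqref{CEKS} of Lemma~\ref{SZ10p_0} collapses the weight $t^{\exc\,\pi}y^{\fix\,\pi}$ into $t^{\exc\,\pi+\fix\,\pi}=t^{\wex\,\pi}$, so that $\sum_{n\ge0}W_n(t,q)z^n$ equals the $J$-fraction obtained from \eqref{CEKS} by the substitution $y=t$: its successive denominators are $1-tz,\,1-(1+t)qz,\,1-(1+t)q^2z,\dots$ and its successive numerators are $tqz^2,\,tq^3z^2,\,tq^5z^2,\dots$

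On the other side I would start from the defining $S$-fraction \eqref{def-q-nara}, whose partial numerators are $q^{k-1}z$ at odd level $2k-1$ and $tq^{k-1}z$ at even level $2k$. Passing to $C_n(q/t,q)$ replaces $t$ by $q/t$, and extracting $\sum_nC_n(q/t,q)(tz)^n$ amounts to the further substitution $z\mapsto tz$; together these turn the partial numerators into $d_{2k-1}=tq^{k-1}z$ and $d_{2k}=q^kz$. Applying the first contraction identity of Lemma~\ref{contra-formula} pairs them two at a time, and a direct computation of $d_1,\,d_1d_2,\,d_2+d_3,\,d_3d_4,\,d_4+d_5,\dots$ reproduces precisely the denominators $1-tz,\,1-(1+t)qz,\dots$ and numerators $tqz^2,\,tq^3z^2,\dots$ found above. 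Hence the two $J$-fractions agree, giving $\sum_nW_n(t,q)z^n=\sum_nt^nC_n(q/t,q)z^n$ and therefore $W_n(t,q)=t^nC_n(q/t,q)$.

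The only delicate point I anticipate is the bookkeeping in the contraction step: one must keep the two interleaved families $d_{2k-1}$ and $d_{2k}$ correctly indexed so that the pairing $d_{2k-1}d_{2k}$ and the sums $d_{2k}+d_{2k+1}$ line up with the two families appearing in the $y=t$ specialization of \eqref{CEKS}. Everything else is routine: the collapse $t^{\exc\,\pi}t^{\fix\,\pi}=t^{\wex\,\pi}$ is immediate, and the passage from the single identity to the ten right-hand sides is a direct appeal to Theorem~\ref{qnara}. A check at $n=2,3$ (where $W_2=t^2+tq$ and $W_3=t^3+2t^2q+t^2q^2+tq^2$) confirms both the core identity and the reindexing, and also shows that the underlying per-permutation correspondence is genuinely a rearrangement rather than a term-by-term match, which is exactly why the generating-function route is preferable to a naive bijective one.
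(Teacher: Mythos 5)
Your proposal is correct, and the core identity it isolates, $W_n(t,q)=t^nC_n(q/t,q)$, is exactly what is needed; your contraction bookkeeping checks out (with $c_{2k-1}=tq^{k-1}$, $c_{2k}=q^{k}$ after the substitutions $t\mapsto q/t$, $z\mapsto tz$, the contracted $J$-fraction has $b_0=t$, $b_h=(1+t)q^h$ and numerators $tq^{2h+1}z^2$, matching the $y=t$ specialization of Lemma~\ref{SZ10p_0}). However, your route differs from the paper's. The paper never touches continued fractions here: it works at the level of statistics, using $\drop\:\pi=n-\wex\:\pi$ and $\inv\:\pi=n-\wex\:\pi+\cros\:\pi+2\,\nest\:\pi$ (with $\nest\:\pi=0$ on $\S_n(321)$, by Lemma~\ref{321-iff-ine=0} and the remark that $\ine=\nest$) to rewrite $W_n(t,q)=t^n\sum_{\pi\in\S_n(321)}(q/t)^{\drop\:\pi}q^{\cros\:\pi}$, and then applies the Shin--Zeng bijection $\Upsilon$ of Lemma~\ref{SZ12_lem}, which carries $(\nest,\cros,\drop)$ to $(\bca,\cab,\des)$ and hence sends $\S_n(321)$ onto $\S_n(231)$ with $(\drop,\cros)\mapsto(\des,\cab)$; the remaining nine equalities then follow from Theorem~\ref{qnara}, just as in your final step. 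So both proofs funnel through Theorem~\ref{qnara}, and both in effect establish $W_n(t,q)=t^nC_n(q/t,q)$, but you prove the transfer identity by specializing and contracting generating functions (Lemmas~\ref{SZ10p_0} and~\ref{contra-formula}), whereas the paper proves it bijectively. Your route is more mechanical and makes the clean closed form $t^nC_n(q/t,q)$ explicit; the paper's route buys a permutation-level correspondence explaining why the weak-excedance data on $\S_n(321)$ matches the descent data on $\S_n(231)$, which is in keeping with its combinatorial program. One tiny point worth stating in a write-up: since $\des\:\pi\le n-1$, the factor $t^n(q/t)^{\des\:\pi}$ is a genuine polynomial, so substituting $t\mapsto q/t$ into the polynomial identities of Theorem~\ref{qnara} and clearing $t^n$ is legitimate.
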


\begin{proof}
Since $\drop\:\pi=n-\wex\:\pi$ and $\inv\:\pi=n-\wex\:\pi+\cros\:\pi+2\nest\:\pi$ (\cite[Eq. (40)]{SZ10}), we have
\begin{equation*}
\sum_{\pi\in \S_n(321)} t^{\wex\:\pi}q^{\inv\:\pi}=\sum_{\pi\in \S_n(321)} t^{n-\drop\:\pi}q^{\inv\:\pi}
=t^n\sum_{\pi\in \S_n(321)} ({q}/{t})^{\drop\:\pi}q^{\cros\:\pi}.
\end{equation*}
By Theorem \ref{qnara} and Lemma \ref{SZ12_lem}, we have
$$t^n\sum_{\pi\in \S_n(321)} ({q}/{t})^{\drop\:\pi}q^{\cros\:\pi}=
t^n\sum_{\pi\in \S_n(231)} ({q}/{t})^{\des\:\pi}q^{(\cab)\:\pi}=t^n\sum_{\pi\in \S_n(231)} ({q}/{t})^{\des\:\pi}q^{(\acb)\:\pi},$$
and the remaining equalities follow similarly.
\end{proof}

We can now derive another $q$-$\gamma$-expansion for the joint distribution of $\wex$ and $\inv$ over $\S_n(321)$.
\begin{theorem}\label{qwex}
For any $n\geq 1$,
\begin{align}\label{wex-inv-gamma}
\sum_{\pi\in \S_{n}(321)}t^{\wex\:\pi}q^{\inv\:\pi}=\sum_{k=1}^{\lfloor\frac{n+1}{2}\rfloor}\biggl(\:q^{n-k}\sum_{\pi\in \widetilde{\S}_{n,k-1}(231)}q^{(\acb)\:\pi}\biggr)t^k(1+t/q)^{n+1-2k}.
\end{align}
\end{theorem}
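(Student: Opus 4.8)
The plan is to combine the interpretation of $W_n(t,q)$ furnished by Theorem~\ref{HZthwex} with the $\gamma$-expansion of the $231$-avoiding, $(\acb)$-counting generating function recorded in Theorem~\ref{thm:des-ai-qgamma}, and then to execute a single change of variable $t\mapsto q/t$ followed by an index shift. All the combinatorial content is already contained in the earlier results, so the argument is essentially bookkeeping of exponents.

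First I would invoke Theorem~\ref{HZthwex}, which gives
\[
W_n(t,q)=t^n\sum_{\pi\in\S_n(231)}(q/t)^{\des\:\pi}q^{(\acb)\:\pi}.
\]
On the other hand, interpretation~$1$ of Theorem~\ref{qnara} reads $C_n(t,q)=\sum_{\pi\in\S_n(231)}t^{\des\:\pi}q^{(\acb)\:\pi}$, and the matching $\gamma$-expansion \eqref{eq:231:132} of Theorem~\ref{thm:des-ai-qgamma} states
\[
\sum_{\pi\in\S_n(231)}t^{\des\:\pi}q^{(\acb)\:\pi}
=\sum_{k=0}^{\lfloor (n-1)/2\rfloor}\gamma_{n,k}(q)\,t^k(1+t)^{n-1-2k},
\qquad
\gamma_{n,k}(q)=\sum_{\pi\in\widetilde{\S}_{n,k}(231)}q^{(\acb)\:\pi}.
\]
Because this is a polynomial identity in $t$ valid for every value of $t$, I may substitute $t\mapsto q/t$ throughout, obtaining an expression for $\sum_{\pi\in\S_n(231)}(q/t)^{\des\:\pi}q^{(\acb)\:\pi}$ as $\sum_k \gamma_{n,k}(q)\,(q/t)^k(1+q/t)^{n-1-2k}$.

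Next I would multiply by $t^n$ and simplify the monomial $t^n(q/t)^k(1+q/t)^{n-1-2k}$. Writing $(1+q/t)^{n-1-2k}=(t+q)^{n-1-2k}/t^{\,n-1-2k}$ and collecting the powers of $t$ (the net exponent of $t$ being $n-k-(n-1-2k)=k+1$), this monomial equals $t^{k+1}q^k(t+q)^{n-1-2k}$, so that
\[
W_n(t,q)=\sum_{k=0}^{\lfloor (n-1)/2\rfloor}\gamma_{n,k}(q)\,t^{k+1}q^k(t+q)^{n-1-2k}.
\]
Finally I would shift the index by putting $k'=k+1$, so that $k'$ ranges from $1$ to $\lfloor (n+1)/2\rfloor$ and $n-1-2k=n+1-2k'$, and then rewrite $q^{k'-1}(t+q)^{n+1-2k'}=q^{n-k'}(1+t/q)^{n+1-2k'}$; this last equality is purely formal, since both sides equal $q^{k'-1}(t+q)^{n+1-2k'}$ once the factor $(1+t/q)^{n+1-2k'}=(q+t)^{n+1-2k'}/q^{\,n+1-2k'}$ is expanded. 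Substituting $\gamma_{n,k'-1}(q)=\sum_{\pi\in\widetilde{\S}_{n,k'-1}(231)}q^{(\acb)\:\pi}$ then reproduces the right-hand side of \eqref{wex-inv-gamma} verbatim.

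Since every step is either a direct citation of an established result or elementary algebra, there is no genuine obstacle. The only point demanding care is the bookkeeping of the $t$- and $q$-exponents under the substitution $t\mapsto q/t$ and the subsequent index shift, together with checking that the summation range $0\le k\le\lfloor (n-1)/2\rfloor$ transforms exactly into $1\le k'\le\lfloor (n+1)/2\rfloor$ for both parities of $n$; verifying these two small consistency checks is what makes the otherwise routine computation land precisely on the claimed formula.
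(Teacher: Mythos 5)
Your proposal is correct and coincides with the paper's own proof: both combine Theorem~\ref{HZthwex} with the $231$-avoiding $\gamma$-expansion \eqref{eq:231:132} of Theorem~\ref{thm:des-ai-qgamma} under the substitution $t\mapsto q/t$, then shift the summation index by one. Your version merely spells out the exponent bookkeeping (the simplification $t^n(q/t)^k(1+q/t)^{n-1-2k}=q^kt^{k+1}(t+q)^{n-1-2k}$ and the range check $\lfloor(n-1)/2\rfloor+1=\lfloor(n+1)/2\rfloor$) that the paper leaves implicit, and all of it is accurate.
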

\begin{proof}
By Theorems~\ref{HZthwex} and \ref{thm:des-ai-qgamma},
\begin{align*}
t^n\sum_{\pi\in \S_n(231)} ({q}/{t})^{\des\:\pi}q^{(\acb)\:\pi}
=t^n\sum_{k=0}^{\lfloor\frac{n-1}{2}\rfloor}\left(\sum_{\pi\in\widetilde{\S}_{n, k}(231)} {q^{(\acb)\:\pi}}\right)(q/t)^k(1+q/t)^{n-1-2k}.
\end{align*}

For the right-hand side of above equation,  by shifting $k$ to $k-1$, we get \eqref{wex-inv-gamma}.
\end{proof}

Comparing \eqref{wex-inv-gamma} with \eqref{Lin-eq2}, by utilizing Theorem~\ref{HZthwex} and Theorem \ref{thm:des-ai-qgamma}, we obtain the following equivalent
$q$-analogues of $\gamma$-coefficients with the same arguments in the proof of Theorem \ref{qwex}.

\begin{corollary}\label{gammacoef-othercomb}
There holds
\begin{align*}
\gamma_{n, k}(q)&:=\sum_{\pi\in \mathrm{NDW}_{n,k}(321)}q^{\inv\:\pi}\\
&=q^{n-k}\sum_{\pi\in \widetilde{\S}_{n,k-1}(231)}q^{(\acb)\:\pi}=q^{n-k}\sum_{\pi\in \widetilde{\S}_{n,k-1}(132)}q^{(\bca)\:\pi}\\
&=q^{n-k}\sum_{\pi\in \widetilde{\S}_{n,k-1}(312)}q^{(\bac)\:\pi}=q^{n-k}\sum_{\pi\in \widetilde{\S}_{n,k-1}(213)}q^{(\cab)\:\pi}.
\end{align*}
\end{corollary}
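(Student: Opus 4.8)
The plan is to read off the claim by comparing two $\gamma$-type expansions of one and the same polynomial. Write $W_n(t,q)=\sum_{\pi\in\S_n(321)}t^{\wex\:\pi}q^{\inv\:\pi}$. Lin's formula \eqref{Lin-eq2} and Theorem~\ref{qwex} (i.e. formula \eqref{wex-inv-gamma}) both express $W_n(t,q)$ in the family $\{t^k(1+t/q)^{n+1-2k}\}_{k=1}^{\lfloor(n+1)/2\rfloor}$: the former with coefficient $\sum_{\pi\in\mathrm{NDW}_{n,k}(321)}q^{\inv\:\pi}$ and the latter with coefficient $q^{n-k}\sum_{\pi\in\widetilde{\S}_{n,k-1}(231)}q^{(\acb)\:\pi}$. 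Hence, once the expansion is shown to be unique, these two coefficients must coincide, which is exactly the first asserted equality $\sum_{\pi\in\mathrm{NDW}_{n,k}(321)}q^{\inv\:\pi}=q^{n-k}\sum_{\pi\in\widetilde{\S}_{n,k-1}(231)}q^{(\acb)\:\pi}$.

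The only point requiring justification is that the family $p_k(t):=t^k(1+t/q)^{n+1-2k}$, for $1\le k\le\lfloor(n+1)/2\rfloor$, is linearly independent over $\mathbb{Q}(q)$. This is immediate from the lowest-degree terms: expanding $(1+t/q)^{n+1-2k}$ shows that the term of smallest degree in $p_k(t)$ is $t^k$, with coefficient $1$, so the $p_k$ have pairwise distinct lowest degrees in $t$. A vanishing $\mathbb{Q}(q)$-linear combination therefore forces all coefficients to vanish, reading them off one at a time starting from the smallest index $k$ present. Consequently any polynomial has at most one expansion in this family, and the coefficient comparison of the previous paragraph is legitimate.

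It then remains to chain in the other three interpretations. For this I would invoke Theorem~\ref{thm:des-ai-qgamma}: replacing $k$ by $k-1$ in the equalities \eqref{eq:213:312} and \eqref{eq:231:132} yields
\[
\sum_{\pi\in\widetilde{\S}_{n,k-1}(231)}q^{(\acb)\:\pi}=\sum_{\pi\in\widetilde{\S}_{n,k-1}(132)}q^{(\bca)\:\pi}=\sum_{\pi\in\widetilde{\S}_{n,k-1}(312)}q^{(\bac)\:\pi}=\sum_{\pi\in\widetilde{\S}_{n,k-1}(213)}q^{(\cab)\:\pi}.
\]
Multiplying throughout by $q^{n-k}$ and combining with the first equality produces the full chain asserted in the corollary. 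A small bookkeeping check makes this legal: both expansions run over $1\le k\le\lfloor(n+1)/2\rfloor$, and the shift $k\mapsto k-1$ keeps the index $k-1$ in the range $0\le k-1\le\lfloor(n-1)/2\rfloor$ on which Theorem~\ref{thm:des-ai-qgamma} is stated.

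The main, and essentially the only, obstacle is the uniqueness step above; everything else is a transcription of results already in hand. In particular no new bijection or MFS-action is needed here, since the substantive combinatorics has been absorbed into Theorems~\ref{HZthwex}, \ref{thm:des-ai-qgamma} and \ref{qwex} together with Lin's formula \eqref{Lin-eq2}. This is precisely the sense in which the corollary follows "with the same arguments as in the proof of Theorem~\ref{qwex}."
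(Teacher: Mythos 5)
Your proposal is correct and matches the paper's proof, which likewise obtains the first equality by comparing \eqref{wex-inv-gamma} with \eqref{Lin-eq2} and then chains in the remaining interpretations via Theorems~\ref{HZthwex} and \ref{thm:des-ai-qgamma}. Your explicit verification that the family $t^k(1+t/q)^{n+1-2k}$ is linearly independent over $\mathbb{Q}(q)$ is a point the paper leaves implicit, and your index bookkeeping ($\lfloor(n+1)/2\rfloor-1=\lfloor(n-1)/2\rfloor$) is accurate.
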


\section{Avoiding one pattern of length three: a complete characterization}\label{sec5: Cat}
\subsection{\texorpdfstring{The $231$-avoding $\des$-case and its $q$-analogues}{}}
The $231$-avoiding alternating permutations were first enumerated by Mansour \cite{Man}:
\begin{align}\label{alt-231=Cn}
|\A_{2n+1}(231)|=|\A_{2n}(231)|=C_n,\quad \text{ for }n\ge 0.
\end{align}
A bijective proof of this fact with further implications was given by Lewis \cite{Lew1}. Indeed, combining \eqref{alt-231=Cn} with exercises 146, 147, 149 and 150 in \cite{Sta2}, utilizing the reverse map, the complement map, as well as the reverse complement map, one get the complete enumerations for all alternating permutations avoiding a single pattern of length three (see Table~\ref{tab:alt-3}).

Recall the {\em standardization} of a word $w$ with $n$ distinct ordered letters, denoted as $\st(w)$, is the unique permutation in $\S_n$ that is order isomorphic to $w$. We say a word $w_1$ is superior to another word $w_2$ and denote as $w_1>w_2$, if for any two letters $l_1\in w_1, l_2\in w_2$, we always have $l_1>l_2$. The following decomposition is crucial for deriving $q$-analogues of the $(-1)$-phenomenon on pattern-avoiding subsets of the coderangements.
\begin{lemma}\label{c-a-b decompose}
Let $P_0(t,q)=Q_0(t,q)=R_1(t,q)=1, P_1(t,q)=Q_1(t,q)=0$, and for $n\ge 2$,
\begin{align*}
P_n(t,q)&:=\sum_{\pi\in\D^*_n(231)}t^{\des\:\pi}q^{(\acb)\:\pi},\\
Q_n(t,q)&:=\sum_{\pi\in\D^*_n(132)}t^{\des\:\pi}q^{(\bca)\:\pi},\\
R_n(t,q)&:=\sum_{\pi\in\D^*_n(213)}t^{\des\:\pi}q^{(\cab)\:\pi}.
\end{align*}
Then for $n\ge 2$,
\begin{align}\label{P-conv}
P_n(t,q) &= \sum_{m=0}^{n-2}tq^{n-m-1}P_m(t,q)C_{n-m-1}(t,q),\\
Q_n(t,q) &= \sum_{m=0}^{n-2}tq^{m}Q_m(t,q)C_{n-m-1}(t,q),\label{Q-conv}\\
R_n(t,q) &= \sum_{m=1}^{n-1}tq^{n-m-1}R_m(t,q)C_{n-m-1}(t,q).\label{R-conv}
\end{align}
\end{lemma}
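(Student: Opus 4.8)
The plan is to prove all three recurrences by one and the same mechanism: a canonical one-step block decomposition of a pattern-avoiding permutation, read off from the position of a distinguished extremal letter, combined with the observation that the foremaximum-free (coderangement) condition localizes to a single block of that decomposition. I would carry out \eqref{P-conv} in full and then indicate how \eqref{Q-conv} and \eqref{R-conv} follow by the same scheme, with the roles of ``largest'' and ``smallest'' letter, and of ``left'' and ``right'' block, permuted according to the pattern.

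First I would recall the standard structural decomposition of a $231$-avoider: if $\pi\in\S_n(231)$ has its largest letter $n$ in position $k$, then writing $\pi=\sigma\,n\,\tau$ every letter of $\sigma$ is smaller than every letter of $\tau$, so that $\sigma$ standardizes to a $231$-avoider on $\{1,\dots,k-1\}$ and $\tau$ to one on $\{k,\dots,n-1\}$. The crucial point is that the left-to-right maxima of $\pi$ are exactly those of $\sigma$ together with $n$ itself, while no letter of $\tau$ is a left-to-right maximum of $\pi$. Hence the condition $\fmax\,\pi=0$ imposes no constraint on $\tau$, but it does force $n$ out of the last position (so $\tau\neq\varnothing$, giving the bound $m\le n-2$) and forces $\sigma$, after standardization, to be itself a coderangement in $\D^*_{m}(231)$ with $m=k-1$. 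Thus $\tau$ ranges freely over $\S_{n-m-1}(231)$ and contributes, via the interpretation $C_{n-m-1}(t,q)=\sum_{\tau\in\S_{n-m-1}(231)}t^{\des\,\tau}q^{(\acb)\,\tau}$ from Theorem~\ref{qnara}, exactly the factor $C_{n-m-1}(t,q)$, whereas $\sigma$ contributes $P_m(t,q)$; the base data $P_0=1$, $P_1=0$ correspond to $\D^*_0=\{\varnothing\}$ and $\D^*_1=\varnothing$.

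Next I would track the two statistics across the junction at $n$. The letter $n$ creates exactly one new descent, which supplies the prefactor $t$; internal descents of $\sigma$ and of $\tau$ are unaffected by standardization. For the vincular statistic I would establish an additivity of the form $(\acb)\,\pi=(\acb)\,\sigma'+(\acb)\,\tau'+(\text{boundary term})$: occurrences internal to $\sigma$ or to $\tau$ survive standardization, no occurrence can mix the two blocks in a forbidden way because every letter of $\tau$ dominates every letter of $\sigma$, and the boundary slot at $n$ pairs with each of the $n-m-1$ letters of the free block to produce the occurrences recorded by $q^{\,n-m-1}$. Summing over $m$ then yields \eqref{P-conv}.

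The variants \eqref{Q-conv} and \eqref{R-conv} run in parallel. For \eqref{Q-conv} one again cuts at the largest letter $n$, but $132$-avoidance now makes the prefix carry the \emph{large} values; the coderangement condition localizes to the prefix (which becomes $Q_m$), and the junction now pairs with the $m$ prefix letters to give $q^{m}$ (occurrences of $2\text{-}31$). For \eqref{R-conv} the natural cut is at the smallest letter $1$: by $213$-avoidance one writes $\pi=\alpha\,1\,\beta$ with $\alpha$ above $\beta$ in value, all left-to-right maxima lie in $\alpha$, which must be nonempty once $n\ge 2$ (explaining the summation from $m=1$), so $\alpha$ standardizes to $R_m$, $\beta$ is free, and the junction at $1$ pairs with the $n-m-1$ letters of $\beta$ to give $q^{\,n-m-1}$ (occurrences of $31\text{-}2$). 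I expect the genuine obstacle to be precisely this junction bookkeeping for the vincular statistics $\acb,\bca,\cab$: one must verify pattern by pattern that the only cross-block contribution is the claimed boundary count, and, most delicately, that the degenerate and endpoint cases (an empty constrained block, the last/first position) produce that count \emph{uniformly in $m$}. This is exactly the point where the boundary conventions $\pi(0)=\pi(n+1)=n+1$ for $\{231,132\}$ versus $\pi(0)=\pi(n+1)=0$ for $213$, fixed in Remark~\ref{dual}, must be used with care; by contrast, the localization of the foremaximum condition to a single block is robust and essentially immediate.
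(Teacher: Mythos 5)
Your proposal follows the paper's route exactly. The paper proves \eqref{P-conv} by establishing the very decomposition you describe: $\pi=\pi^{(1)}n\,\pi^{(2)}$ with $\pi^{(2)}\neq\varnothing$, $\pi^{(1)}\in\D^*_m(231)$, $\st(\pi^{(2)})\in\S_{n-m-1}(231)$ and $\pi^{(2)}>\pi^{(1)}$ -- the foremaximum condition localizing to the prefix because no letter after $n$ can be a left-to-right maximum, and $\pi^{(2)}\neq\varnothing$ because a terminal $n$ would be a foremaximum under the convention $\pi(n+1)=n+1$ -- and then simply says ``examine the change of $\des$ and $(\acb)$'', omitting \eqref{Q-conv} and \eqref{R-conv} as similar. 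Your junction bookkeeping (the ascent onto $n$ paired with the suffix letters for $(\acb)$; the descent off $n$ paired with the $m$ prefix letters for $(\bca)$ in the $132$ case; the descent onto $1$ paired with the suffix letters for $(\cab)$ in the $213$ case, with the cut at the minimum and summation from $m=1$) is precisely the intended completion of that terse proof.

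However, the endpoint case you flag at the end is not a formality that ``works out uniformly'': it genuinely fails for \eqref{P-conv} at $m=0$ under the boundary-free reading of $(\acb)$ that Theorem~\ref{qnara} requires (e.g.\ $C_2(t,q)=1+t$ forces $(\acb)\,21=0$). When $\pi^{(1)}=\varnothing$, i.e.\ $\pi=n\,\pi^{(2)}$, there is no ascent pair $(\pi(i),\pi(i+1))=(\cdot\,,n)$ with $i\ge1$, so the junction contributes $0$ occurrences rather than $n-1$. Concretely, $\D^*_2(231)=\{21\}$ gives $P_2=t$ while \eqref{P-conv} predicts $tq$, and $\D^*_3(231)=\{312,321\}$ gives $P_3=t+t^2$ against the predicted $tq^2(1+t)$; your counting actually proves \eqref{P-conv} with the $m=0$ term replaced by $tP_0(t,q)C_{n-1}(t,q)$, the factor $q^{n-m-1}$ being valid only for $m\ge1$. (The $t=-1$ consequences are consistent with this corrected form, e.g.\ $P_2(-1,q)=-1=-C^*_1(q)$, and the same correction propagates to the $m=0$ term of \eqref{C*-conv}; the paper's one-line bookkeeping glosses the identical point.) By contrast \eqref{Q-conv} is safe, since there the junction count $m$ vanishes exactly when the constrained block is empty, and \eqref{R-conv} is safe since $m\ge1$ -- though there your phrase ``$\alpha$ standardizes to $R_m$'' hides one more convention you should make explicit: for $m=1$ the prefix $\alpha$ consists of the single letter $n$ and standardizes to $1\notin\D^*_1=\varnothing$. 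What the foremaximum condition really localizes to is ``$\st(\alpha)$ begins with its maximum'' (cf.\ Lemma~\ref{D*213=nS213}), which coincides with $\D^*_m(213)$ only for $m\ge2$ and is exactly why the lemma sets $R_1:=1$.
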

\begin{proof}
The key observation is that, $\pi\in\D^*_n(231)$ if and only if $\pi=\pi^{(1)}n\pi^{(2)}$, for some subwords $\pi^{(1)}$ and $\pi^{(2)}\neq\varnothing$, such that $\pi^{(1)}\in\D^*_m(231)$ and $\st(\pi^{(2)})\in\S_{n-m-1}(231)$, for some $m, 0\le m\le n-2$, with $\pi^{(2)}>\pi^{(1)}$. Indeed, if $\pi\in\D^*_n(231)$, since $n$ must be a left-to-right maximum of $\pi$, it cannot be at the end of $\pi$, so $\pi^{(2)}\neq\varnothing$. And since $\pi^{(2)}$ is preceded by $n$, it contains no left-to-right maximum, therefore the coderangement restriction does not affect $\pi^{(2)}$ at all, but we do need it to avoid $231$ as a subword of $\pi$. The condition $\pi^{(2)}>\pi^{(1)}$ is to guarantee that $xny$ does not form a $231$ pattern for any $x\in\pi^{(1)}$ and $y\in\pi^{(2)}$. The above discussion shows the ``only if'' part of the claim, the ``if'' part should be clear as well.

Now that the claimed decomposition is justified, we use the appropriate $231$-avoiding interpretation for $C_{n-m-1}(t,q)$ taken from Theorem~\ref{qnara} and examine the change of $\des$ and $(\acb)$ during this decomposition. This should give us \eqref{P-conv}, the proofs of \eqref{Q-conv} and \eqref{R-conv} are similar and thus omitted.
\end{proof}

Now we can derive the following $q$-analogues for the $(-1)$-phenomenon on $\S_n(231)$ concerning $\des$, which parallels Theorem~\ref{q-nara} nicely.
\begin{theorem}\label{thm:q231-des}
For any $n\ge 1$,
\begin{align}
\label{q231-des-odd}
\sum_{\pi\in\S_n(231)}(-1)^{\des\:\pi}q^{(\cab)\:\pi}&=\sum_{\pi\in\S_n(231)}(-1)^{\des\:\pi}q^{(\acb)\:\pi}=\begin{cases}0 & \emph{if $n$ is even},\\
(-q)^{\frac{n-1}{2}}C_{\frac{n-1}{2}}(q^2) & \emph{if $n$ is odd},
\end{cases}\\
\label{q231-des-even1}
\sum_{\pi\in\D^*_n(231)}(-q)^{\des\:\pi}q^{(\cab)\:\pi}&=\begin{cases}(-q)^{\frac{n}{2}}C_{\frac{n}{2}}(q^2) & \emph{if $n$ is even},\\
0 & \emph{if $n$ is odd},
\end{cases}\\
\label{q231-des-even2}
\sum_{\pi\in\D^*_n(231)}(-1)^{\des\:\pi}q^{(\acb)\:\pi}&=\begin{cases}(-1)^{\frac{n}{2}}C^*_{\frac{n}{2}}(q) & \emph{if $n$ is even},\\
0 & \emph{if $n$ is odd},
\end{cases}
\end{align}
where $C^*_n(q):=\sum\limits_{\pi\in\A_{2n}(132)}q^{(\bca)\:\pi}$.
\end{theorem}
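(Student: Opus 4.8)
The statement to prove is Theorem~\ref{thm:q231-des}, which asserts three $(-1)$-evaluations. The natural strategy is to treat each of the three identities through the same mechanism: reduce each sum to a specialization of a generating function whose continued fraction (or $\gamma$-expansion) is already known, then extract the claimed closed form. For \eqref{q231-des-odd}, the cleanest route is to observe that by Theorem~\ref{qnara} (entries~1 and~7 of Table~\ref{ten}) both sums $\sum_{\pi\in\S_n(231)}t^{\des\:\pi}q^{(\cab)\:\pi}$ and $\sum_{\pi\in\S_n(231)}t^{\des\:\pi}q^{(\acb)\:\pi}$ equal $C_n(t,q)$. Hence setting $t=-1$ reduces both to $C_n(-1,q)$, and the right-hand side is immediately the odd/even dichotomy already recorded in~\eqref{q-nara-odd} of Theorem~\ref{q-nara}. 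So I would dispose of~\eqref{q231-des-odd} in one line, by direct appeal to the two pattern interpretations and the known value $C_n(-1,q)$.

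\textbf{The coderangement identities.} The two identities~\eqref{q231-des-even1} and~\eqref{q231-des-even2} are the substantive ones, since they range over coderangements $\D^*_n(231)$ rather than all of $\S_n(231)$. Here the engine must be Lemma~\ref{c-a-b decompose}, which gives the convolution recurrence $P_n(t,q)=\sum_{m=0}^{n-2}tq^{n-m-1}P_m(t,q)C_{n-m-1}(t,q)$ for $P_n(t,q)=\sum_{\pi\in\D^*_n(231)}t^{\des\:\pi}q^{(\acb)\:\pi}$, and the analogous relation $R_n$ for the $(\cab)$ statistic. The plan is to translate this recurrence into a generating-function identity. Writing $P(z)=\sum_n P_n(t,q)z^n$ and $C(z)=\sum_n C_n(t,q)z^n$, the convolution structure yields a functional equation expressing $P(z)$ through $C(z)$ with a $q$-shift inside the convolution; concretely the factor $q^{n-m-1}$ signals that one should substitute $z\mapsto qz$ in the appropriate factor before convolving, producing a clean relation of the form $P(z)-1 = tz\,C(z)\,P(qz)$ (or its $(\cab)$-variant with the $q$-power attached to the $P$-factor instead). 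Specializing $t=-q$ in~\eqref{q231-des-even1} and $t=-1$ in~\eqref{q231-des-even2} should then telescope the functional equation into a continued fraction of exactly the shape in~\eqref{def-q-nara}, whereupon comparison with the Carlitz $q$-Catalan continued fraction delivers $(-q)^{n/2}C_{n/2}(q^2)$ in the first case.

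\textbf{The genuinely new quantity.} The main obstacle is~\eqref{q231-des-even2}, because its right-hand side involves $C^*_n(q):=\sum_{\pi\in\A_{2n}(132)}q^{(\bca)\:\pi}$, a statistic over \emph{alternating} $132$-avoiding permutations that has not yet been related to the continued-fraction data. Matching the $(-1)$-evaluation to this object is precisely the $(-1)$-phenomenon being advertised, so it cannot be read off from Theorem~\ref{q-nara} alone. I would handle this by deriving, from the $t=-1$ specialization of the $R$- or $P$-recurrence, a separate recurrence for the resulting coefficients and then identifying that recurrence with one satisfied by $C^*_{n}(q)$. The cleanest way to establish the latter is to show directly that $\A_{2n}(132)$ admits a decomposition parallel to the one in Lemma~\ref{c-a-b decompose} for coderangements --- splitting an alternating $132$-avoiding permutation at its maximal entry and tracking $(\bca)$ --- so that $C^*_n(q)$ obeys the same convolution as the surviving $t=-1$ sum. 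Thus the plan reduces to two convolution identities meeting in the middle; the delicate point will be getting the $q$-powers and the parity bookkeeping (why only even length survives, and why the half-length Catalan index appears) to agree exactly, since a single misplaced $q^{n-m-1}$ versus $q^m$ shift is the difference between the two distinct right-hand sides.
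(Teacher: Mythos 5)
Your treatments of \eqref{q231-des-odd} and \eqref{q231-des-even2} are essentially the paper's: for the former, $t=-1$ in entries 1 and 7 of Theorem~\ref{qnara} plus \eqref{q-nara-odd}; for the latter, induction through \eqref{P-conv} kills the odd case, and the even case reduces to the convolution $C^*_n(q)=\sum_{m=0}^{n-1}q^{3n-3m-2}C^*_m(q)C_{n-m-1}(q^2)$, proved by splitting $\pi\in\A_{2n}(132)$ at its maximum --- exactly your ``parallel decomposition.'' One caveat there: matching the two recurrences is not automatic, because the odd-length factor produced by that splitting carries the weight $\sum_{\pi\in\A_{2n-2m-1}(132)}q^{(\bca)\:\pi}$, and equating this with the surviving $t=-1$ kernel requires the bridge $q^{n-m-1}C_{n-m-1}(q^2)=\sum_{\pi\in\A_{2n-2m-1}(132)}q^{(\bca)\:\pi}$, which the paper extracts from Proposition~\ref{cnq: three sets} combined with Corollary~\ref{gammacoef-othercomb}; your sketch leaves this step unnamed, and without it the ``identification of recurrences'' has unmatched kernels.

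The genuine gap is \eqref{q231-des-even1}. You invoke ``the analogous relation $R_n$ for the $(\cab)$ statistic'' from Lemma~\ref{c-a-b decompose}, but the $R_n$ of that lemma is a sum over $\D^*_n(213)$, not over $\D^*_n(231)$; the lemma contains no recurrence for $\sum_{\pi\in\D^*_n(231)}t^{\des\:\pi}q^{(\cab)\:\pi}$, and the decomposition $\pi=\pi^{(1)}n\pi^{(2)}$ does not yield one of the clean shape you posit. Indeed, writing $k=n-m-1$ and $\sigma=\st(\pi^{(2)})$, one finds $(\cab)\:\pi=(\cab)\:\pi^{(1)}+(\cab)\:\sigma+\bigl(k-\fl\:\sigma\bigr)$: the new descent $(n,\pi^{(2)}(1))$ contributes one occurrence of $\cab$ for each later letter of $\pi^{(2)}$ exceeding its first letter, and this count is the non-constant statistic $k-\fl\:\sigma$ on the second factor, so neither $q^{n-m-1}$ nor $q^{m}$ can simply be attached to a factor and your functional equation ``with the $q$-power attached to the $P$-factor'' has no derivation. (It is repairable: by \eqref{idf} the kernel becomes $q^{k-1}C_k(t/q,q)$, and at $t=-q$ the induction closes via the Carlitz-type recurrence for $C_n(q^2)$ --- but none of this is in your sketch.) The paper avoids the issue entirely, obtaining \eqref{q231-des-even1} in one step by setting $(x,y,q,p,s)=(-1,0,1,0,q)$ in the Shin--Zeng continued fraction \eqref{gf-quintuple}: there $y=0$ restricts to coderangements, $p=0$ restricts to $(\bca)\:\pi=0$, i.e.\ to $231$-avoiders by Lemma~\ref{231-321}, and $s=q$ turns $s^{\MAD}=q^{\des\:\pi+(\cab)\:\pi}$ into the weight $(-q)^{\des\:\pi}q^{(\cab)\:\pi}$, after which the resulting continued fraction is precisely the one evaluated in \eqref{q-nara-even}. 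A smaller slip of the same kind: from \eqref{P-conv} the $q$-power rides on the Catalan factor, so the correct relation is $P(z)-1=tz\,P(z)\bigl(C(qz)-1\bigr)$ (the constant term of $C$ excluded because $\pi^{(2)}\neq\varnothing$), not $P(z)-1=tz\,C(z)P(qz)$.
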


\begin{proof}
All we need to do for proving \eqref{q231-des-odd} (resp. \eqref{q231-des-even1}) is take $t=-1$ (resp. $(x,y,q,p,s)=(-1,0,1,0,q)$) in Theorem~\ref{qnara} (resp. \eqref{gf-quintuple}), then apply Theorem~\ref{q-nara}. Next for \eqref{q231-des-even2}, with the decomposition \eqref{P-conv} in mind, we note that $P_{2n+1}(-1,q)=0$ follows from induction on $P_m(-1,q)$ and using \eqref{q-nara-odd} for $C_{n-m-1}(-1,q)$. In the same vein, the even $2n$ case reduces to proving the following identity:
\begin{align}\label{C*-conv}
C^*_n(q) &= \sum_{m=0}^{n-1}q^{3n-3m-2}C^*_m(q)C_{n-m-1}(q^2).
\end{align}
Combining Proposition~\ref{cnq: three sets} and Corollary~\ref{gammacoef-othercomb}, we get the desired interpretation that meshes well with that of $C^*_m(q)$:
\begin{align*}
q^{n-m-1}C_{n-m-1}(q^2) &= \sum_{\pi\in\A_{2n-2m-1}(132)}q^{(\bca)\:\pi}.
\end{align*}
Next we plug this back to \eqref{C*-conv} and decompose permutations in $\A_{2n}(132)$ similarly as in the proof of \eqref{P-conv} to complete the proof.
\end{proof}

The first few values for $C^{*}_n(q)$ are:
\begin{align*}
C^*_0(q) &=C^*_1(q)=1,\\
C^*_2(q) &=2q,\\
C^*_3(q) &=3q^2+2q^4,\\
C^*_4(q) &=4q^3+6q^5+2q^7+2q^9,\\
C^*_5(q) &=5q^4+12q^6+9q^8+8q^{10}+4q^{12}+2q^{14}+2q^{16}.
\end{align*}

\begin{table}[tbp]\caption{The enumeration of $\A_n(\tau)$, for $n\ge 3$ odd and even.}
\label{tab:alt-3}
\centering
\begin{tabular}{ccccccc}
\hline
$\tau$ &123 &132 &213 &231 &312 &321\\
\hline
$\A_{2n+1}(\tau)$ &$C_{n+1}$ &$C_n$ &$C_{n+1}$ &$C_n$ &$C_{n+1}$ &$C_{n+1}$ \\
$\A_{2n}(\tau)$ &$C_{n}$ &$C_n$ &$C_{n}$ &$C_n$ &$C_{n}$ &$C_{n+1}$ \\
\hline
\end{tabular}
\end{table}

\begin{table}[tbp]\caption{The $(-1)$-evaluation over $\S_n(\tau)$ and $\D^*_n(\tau)$ with respect to $\des$.}
\label{tab:avoid-3}
\centering
\begin{tabular}{ccccccc}
\hline
$\des\setminus \tau$ &123 &132 &213 &231 &312 &321\\
\hline
$\S_{2n+1}$ &$\star$ &$(-1)^nC_n$ &$(-1)^nC_n$ &$(-1)^nC_n$ &$(-1)^nC_n$ &$\star$ \\
$\D^*_{2n}$ &$\star$ &$(-1)^nC_n$ &$(-1)^nC_{n-1}$ &$(-1)^nC_n$ &$\star$ &$\star$ \\
\hline
\end{tabular}
\end{table}

\subsection{\texorpdfstring{Other $\des$-cases avoiding one pattern of length three and their $q$-analogues}{}}
In a search for results analogous to Theorems~\ref{thm:q231-des} and \ref{q-nara}, we consider all the remaining subsets that avoid one pattern of length three, and summarize the results in Tables~\ref{tab:avoid-3} and \ref{tab:exc-avoid3}. The $(-1)$-evaluations for the non-$\star$ entries with missing parity are understood to vanish. For instance, $\sum_{\pi\in\S_{2n}(132)}(-1)^{\des\:\pi}=0$. A ``$\star$'' means there is no such phenomenon in this case. Take the top-left $\star$ for example, we put it there to indicate that neither do $\sum_{\pi\in \S_{2n}(123)}(-1)^{\des\:\pi}$ always vanish, nor do we recognize $\sum_{\pi\in \S_{2n+1}(123)}(-1)^{\des\:\pi}$ as a familiar sequence. For all the $\des$-cases, we actually obtain the stronger $q$-versions. We begin by proving three useful lemmas.

\begin{lemma}\label{D*213=nS213}
For any $n\ge 1$,
\begin{align*}
\sum_{\pi\in\D^*_n(213)}t^{\des\:\pi}q^{(\acb)\:\pi}=t\sum_{\pi\in\S_{n-1}(213)}t^{\des\:\pi}q^{(\acb)\:\pi}.
\end{align*}
\end{lemma}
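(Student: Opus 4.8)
The plan is to realize the left-hand side as the right-hand side shifted by prepending the largest letter. Concretely, I would prove that for $n\ge 2$ the map $\pi\mapsto\sigma$, where $\pi=n\sigma$, is a bijection from $\D^*_n(213)$ onto $\S_{n-1}(213)$ (identifying the word on $\{1,\dots,n-1\}$ with the permutation it equals), and then check that it raises $\des$ by exactly one while leaving $(\acb)$ unchanged. Summing the weight $t^{\des}q^{(\acb)}$ over such a bijection then produces the factor $t$ and the asserted identity at once.

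First I would establish the structural characterization $\D^*_n(213)=\{\,n\sigma:\sigma\in\S_{n-1}(213)\,\}$ for $n\ge 2$. For the forward inclusion, let $\pi\in\D^*_n(213)$. The leading entry $\pi(1)$ is always a left-to-right maximum, so the coderangement condition $\fmax\:\pi=0$ forbids it from being an ascent bottom; hence $\pi(1)>\pi(2)$. If $\pi(1)=m<n$, then $\pi(2)<m<n$, so the letter $n$ occurs at some position $p$, necessarily $p\ge 3$ since $\pi(1),\pi(2)<n$; then positions $1,2,p$ carry the values $m,\pi(2),n$ with $\pi(2)<m<n$, an occurrence of $213$, contradicting $\pi\in\S_n(213)$. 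Thus $\pi(1)=n$, and deleting it leaves a $213$-avoiding word on $\{1,\dots,n-1\}$. For the reverse inclusion, given $\sigma\in\S_{n-1}(213)$ I would prepend $n$: the leading $n$ cannot participate in any $213$ pattern, being the largest letter with nothing before it to play the roles of ``$2$'' and ``$1$'', so $\pi=n\sigma$ still avoids $213$; and the unique left-to-right maximum of $\pi$ is this leading $n$, which is a descent top because $n>\sigma(1)$, whence $\fmax\:\pi=0$. The two operations are manifestly inverse.

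Next I would track the statistics across $\pi=n\sigma$. The new adjacency $(\,n,\sigma(1)\,)$ contributes one extra descent, while all other descents of $\pi$ are precisely those of $\sigma$, so $\des\:\pi=1+\des\:\sigma$, which is the source of the factor $t$. For the vincular count, any occurrence of $\acb$ in $\pi$ is a pair $(i,j)$ with $i+1<j$ and $\pi(i)<\pi(j)<\pi(i+1)$; the case $i=1$ is impossible, as it would demand $\pi(j)>\pi(1)=n$, and every occurrence with $i\ge 2$ matches, after deleting the leading $n$ and lowering all indices by one, exactly one occurrence of $\acb$ in $\sigma$, and conversely. Hence $(\acb)\:\pi=(\acb)\:\sigma$, and summing $t^{\des\:\pi}q^{(\acb)\:\pi}$ over the bijection gives $\sum_{\pi\in\D^*_n(213)}t^{\des\:\pi}q^{(\acb)\:\pi}=t\sum_{\sigma\in\S_{n-1}(213)}t^{\des\:\sigma}q^{(\acb)\:\sigma}$.

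The step I expect to require the most care is the forward structural direction: it is the one place where $213$-avoidance and the coderangement condition at position $1$ must be combined, and getting the short pattern argument exactly right — in particular that $n$ cannot sit at position $2$ — is the crux, after which everything is bookkeeping. I would also dispose of the degenerate case $n=1$ separately: there $\D^*_1(213)=\varnothing$ and prepending the sole letter would place it at the final position, turning it into a foremaximum, so the $n=1$ instance is to be read under the paper's conventions for the empty and length-one cases, whereas the bijection itself is genuine for every $n\ge 2$.
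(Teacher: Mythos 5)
Your proposal is correct and takes essentially the same route as the paper: the paper's proof likewise rests on the observation that $\pi\in\D^*_n(213)$ if and only if $\pi=n\pi'$ with $\pi'\in\S_{n-1}(213)$, together with $\des\:\pi=1+\des\:\pi'$ and $(\acb)\:\pi=(\acb)\:\pi'$, and then sums over the bijection. You simply supply the details the paper dismisses as ``easy to see'' --- the $213$-avoidance argument forcing $\pi(1)=n$, the statistic bookkeeping, and the degenerate case $n=1$ --- all of which you handle correctly.
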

\begin{proof}
It is easy to see from the definition of $\D^*_n$ that $\pi\in\D^*_n(213)$ if and only if $\pi=n\pi'$ with $\pi'\in\S_{n-1}(213)$. Moreover, we note that $\des\:\pi=1+\des\:\pi'$ and $(\acb)\:\pi=(\acb)\:\pi'$. Summing over all the $\pi\in\D^*_n(213)$ completes the proof.
\end{proof}

\begin{lemma}
For $n\ge 1$ and any $\pi\in\S_n$,
\begin{align}\label{idf}
\des\:\pi+(\cab)\:\pi+1 &= \fl\:\pi+(\acb)\:\pi,
\end{align}
where $\fl\:\pi=\pi(1)$ is the {\bf f}irst {\bf l}etter of $\pi$.
\end{lemma}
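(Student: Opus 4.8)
The plan is to prove the equivalent rearranged form
\[
(\acb)\:\pi-(\cab)\:\pi=\des\:\pi-\fl\:\pi+1
\]
by a signed double-counting followed by a telescoping sum. First I would observe that both $(\acb)\:\pi$ and $(\cab)\:\pi$ enumerate the same kind of object: a pair $(i,j)$ with $i+1<j\le n$ whose value $\pi(j)$ lies strictly between the two adjacent values $\pi(i)$ and $\pi(i+1)$; the only difference is the relative order of $\pi(i)$ and $\pi(i+1)$. So I would attach to each such pair a sign $\sigma(i,j)$ equal to $+1$ when $\pi(i)<\pi(j)<\pi(i+1)$ (an occurrence of $\acb$), $-1$ when $\pi(i+1)<\pi(j)<\pi(i)$ (an occurrence of $\cab$), and $0$ otherwise, giving
\[
(\acb)\:\pi-(\cab)\:\pi=\sum_{j=1}^{n}\;\sum_{i\,:\,i+1<j}\sigma(i,j).
\]

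The key step is to fix the ``middle'' value $v=\pi(j)$ and evaluate the inner sum over adjacent pairs lying inside the prefix $\pi(1)\cdots\pi(j-1)$. Setting $b_i:=\chi\bigl(\pi(i)>v\bigr)$ for $1\le i\le j-1$, I would note that each adjacent pair contributes exactly $b_{i+1}-b_i$ to $\sigma(i,j)$: an upward crossing of the level $v$ (from below to above) counts $+1$, a downward crossing counts $-1$, and a pair staying on one side of $v$ counts $0$. Since none of $\pi(1),\dots,\pi(j-1)$ equals $v$, this accounting is exact, and the inner sum telescopes to
\[
\sum_{i\,:\,i+1<j}\sigma(i,j)=b_{j-1}-b_1=\chi\bigl(\pi(j-1)>\pi(j)\bigr)-\chi\bigl(\pi(1)>\pi(j)\bigr).
\]

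Summing over $j$ from $2$ to $n$ (the $j\le 2$ terms being empty, hence zero), the first batch $\sum_{j=2}^{n}\chi(\pi(j-1)>\pi(j))$ counts precisely the descents at positions $1,\dots,n-1$, which equals $\des\:\pi$ under the convention $\pi(n+1)=n+1$; the second batch $\sum_{j=2}^{n}\chi(\pi(1)>\pi(j))$ counts the values smaller than $\pi(1)$, all of which occupy positions $\ge 2$, so there are exactly $\pi(1)-1=\fl\:\pi-1$ of them. Combining gives $(\acb)\:\pi-(\cab)\:\pi=\des\:\pi-(\fl\:\pi-1)$, which is \eqref{idf} after rearrangement.

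I expect the only delicate point to be the bookkeeping at the boundaries: verifying that the restriction $i+1<j$ makes the telescoping start at $b_1$ and end at $b_{j-1}$ (so that $j\le 2$ contributes nothing), and confirming that the descent convention $\pi(n+1)=n+1$ is exactly the one matching $\sum_{j=2}^{n}\chi(\pi(j-1)>\pi(j))$, so that position $n$ is correctly excluded from the descent count. No genuine estimate is involved; the entire content lies in recognizing the signed pattern count as a telescoping sum across the threshold $v=\pi(j)$.
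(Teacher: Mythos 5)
Your proof is correct, and it takes a genuinely different route from the paper's. The paper proves \eqref{idf} by induction on $n$: it locates the largest letter $n$ at position $i$, handles $i\in\{1,n\}$ directly, and otherwise deletes $n$ to form $\pi'$, checking in the two cases $\pi(i-1)<\pi(i+1)$ and $\pi(i-1)>\pi(i+1)$ that the changes in $\des$, $(\acb)$ and $(\cab)$ balance (only triples where $n$ plays the role of the ``3'' are affected, and $\fl$ is unchanged). Your argument is instead a direct, non-inductive signed double count: for each fixed middle value $v=\pi(j)$ you recognize $\sigma(i,j)=b_{i+1}-b_i$ as a level-crossing indicator, so the inner sum over $1\le i\le j-2$ telescopes to $\chi\bigl(\pi(j-1)>\pi(j)\bigr)-\chi\bigl(\pi(1)>\pi(j)\bigr)$, and summing over $j$ gives $(\acb)\:\pi-(\cab)\:\pi=\des\:\pi-(\fl\:\pi-1)$. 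I verified the details: the sign bookkeeping is exact since no $\pi(i)$ with $i<j$ equals $v$; the $j=2$ term vanishes consistently under both the empty sum and the telescoped formula; and your use of the paper's descent convention $\pi(n+1)=n+1$ is right, since $\pi(n)\le n<n+1$ means position $n$ never contributes, so $\sum_{j=2}^{n}\chi(\pi(j-1)>\pi(j))=\des\:\pi$. Comparatively, the paper's induction is shorter to state but hides the mechanism in a case analysis of the deleted maximum, whereas your telescoping proof is self-contained, avoids cases entirely, and yields a sharper local identity at each position $j$ that explains \emph{why} the difference $(\acb)-(\cab)$ collapses to boundary statistics.
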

\begin{proof}
We use induction on $n$. The $n=1$ case holds trivially. Assume \eqref{idf} is true for any permutation with length less than $n$. Let $\pi\in\S_n$ and $i$ be such that $\pi(i)=n$. If $i\in\{1, n\}$, the statement is easily checked. Otherwise we assume $2\le i \le n-1$ and let $\pi'=\pi(1)\cdots\pi(i-1)\pi(i+1)\cdots\pi(n)$.
\begin{itemize}
	\item If $\pi(i-1)<\pi(i+1)$, then $\des\:\pi=\des\:\pi'+1$, $\fl\:\pi=\fl\:\pi'$, and $$(\acb)\:\pi-(\acb)\:\pi'=(\cab)\:\pi-(\cab)\:\pi'+1,$$ where we only need to check the contributions for $\acb$ and $\cab$ coming from the triple with $n$ playing the role of $3$.
	\item $\pi(i-1)>\pi(i+1)$, then $\des\:\pi=\des\:\pi'$, $\fl\:\pi=\fl\:\pi'$, and $$(\acb)\:\pi-(\acb)\:\pi'=(\cab)\:\pi-(\cab)\:\pi'.$$
\end{itemize}
In both cases, we see that \eqref{idf} holds for $n$ as well.
\end{proof}

\begin{lemma}\label{lem:132-fl}
For any $n\ge 2$,
\begin{align}\label{qgamma-fl}
\sum_{\pi\in\D^*_n(132)}t^{\des\:\pi}q^{\fl\:\pi} &= \sum_{k=1}^{\lfloor\frac{n}{2}\rfloor}\biggl(\sum_{\pi\in\overline{\D}^*_{n,k}(132)}q^{\fl\:\pi}\biggr)t^k(1+t)^{n-2k},
\end{align}
where $\overline{\D}^*_{n,k}(132):=\{\pi\in\D^*_n(132): \dd^*\:\pi=1, \des\:\pi=k\}$.
\end{lemma}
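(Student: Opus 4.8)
The plan is to prove \eqref{qgamma-fl} by running the Modified Foata--Strehl action, in the Lin--Zeng ($\ast$-)version with boundary $\pi(0)=\pi(n+1)=n+1$ appropriate for the pattern $132$ (cf.\ Remark~\ref{dual}), but \emph{restricted so that the first letter $\pi(1)$ is never moved}. The guiding observation is that in any coderangement $\pi(1)$ is forced to be a descent top, $\pi(1)>\pi(2)$ (otherwise $\pi(1)$ would be an ascent bottom that is a left-to-right maximum, hence a foremaximum), so under the $\ast$-boundary $\pi(0)=n+1$ the first letter is always a double descent. Moreover, for every $x\neq\pi(1)$ the letter $\pi(1)$ can lie in neither block of the $x$-factorization: lying in the left block $w_2$ would force $x=\pi(2)$ together with $\pi(1)<\pi(2)$, contradicting $\pi(1)>\pi(2)$, and lying in $w_3$ is impossible since $\pi(1)$ is leftmost. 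Hence each $\varphi'_x$ with $x\neq\pi(1)$ fixes $\pi(1)$, so on the group $\Z_2^{\,n-1}$ generated by these maps the statistic $\fl\:\pi=\pi(1)$ is constant on every orbit.

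First I would establish that this restricted action is closed on $\D^*_n(132)$. For this I would invoke the standard $132$-decomposition $\pi=L\,n\,R$, where every letter of $L$ exceeds every letter of $R$ and both $L,R$ are $132$-avoiding; the coderangement condition forces $R\neq\varnothing$, it makes the standardization of $L$ a member of $\D^*_{|L|}(132)$, and it leaves $R$ an arbitrary element of $\S_{|R|}(132)$. Since $n$ is a peak, it is a fixed point of every $\varphi'_x$ and no letter can hop across it, so the action splits as an action on the letters of $L$ (excluding $\pi(1)=L(1)$) and an independent action on the letters of $R$. Closure then follows by induction on $n$ for the $L$-part, from Lemma~\ref{Mfspre} for the $R$-part, and from the fact that the foremaxima of $\pi$ coincide exactly with those of the standardized $L$.

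Next, within each orbit the peaks and valleys are invariant while the double ascents and double descents at the non-initial positions are toggled freely; combined with the identity $\des\:\pi=\peak\:\pi+\dd^*\:\pi$ (valid under the $\ast$-boundary) this shows that each orbit contains a unique element $\bar\pi$ whose only double descent is $\pi(1)$, i.e.\ $\dd^*\:\bar\pi=1$. Writing $k=\des\:\bar\pi$ one gets $\peak\:\bar\pi=k-1$, $\valley\:\bar\pi=k$ and $\da^*\:\bar\pi=n-2k$, and flipping $j$ of these $n-2k$ free double ascents into double descents raises $\des$ by $j$; hence
\begin{align*}
\sum_{\sigma\in\Orb(\pi)}t^{\des\:\sigma}q^{\fl\:\sigma}
=q^{\fl\:\bar\pi}\sum_{j=0}^{n-2k}\binom{n-2k}{j}t^{k+j}
=q^{\fl\:\bar\pi}\,t^{k}(1+t)^{n-2k}.
\end{align*}
Summing over all orbits and grouping by $k=\des\:\bar\pi$, where the representatives $\bar\pi$ range precisely over $\overline{\D}^*_{n,k}(132)$, yields \eqref{qgamma-fl}.

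I expect the main obstacle to be the closure step, namely checking carefully that the coderangement property $\fmax=0$ survives the restricted action; the cleanest route seems to be the decomposition $\pi=L\,n\,R$ above, which both pins down $n$ as an immovable peak and reduces the coderangement bookkeeping to the inductive statement about $L$. The remaining ingredients---the invariance of $\fl$, the identity $\des=\peak+\dd^*$, and the uniqueness of the $\dd^*=1$ representative in each orbit---should then be routine.
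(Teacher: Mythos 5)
Your proposal is essentially correct and follows the same strategy as the paper: a variant of the (Lin--Zeng, $\ast$-boundary) MFS-action that preserves $\fl$, is closed on $\D^*_n(132)$, and whose orbits each contain a unique representative with $\dd^*=1$ (the forced double descent at $\pi(1)$), contributing $q^{\fl\:\bar\pi}t^{k}(1+t)^{n-2k}$. The difference is in which letters get frozen: the paper's $\overline{\varphi}_x$ fixes valleys, peaks, \emph{and all left-to-right maxima}, and then merely asserts closure on $\D^*_n(132)$ without proof, whereas you freeze only $\pi(1)$ and actually prove closure via the decomposition $\pi=L\,n\,R$ with $L>R$ and induction on $L$ --- so your write-up supplies a verification the paper omits. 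The two actions in fact coincide on $\D^*_n(132)$: your $L\,n\,R$ induction shows that every left-to-right maximum of a $132$-avoiding coderangement other than $\pi(1)$ is a peak (the maximum $n-1$ of $L$ is either $L$'s first letter or has smaller letters on both sides, and $n$ is a peak because $L,R$ flank it, $R\neq\varnothing$), and peaks are already fixed. One local argument in your proposal is wrong as stated, though: to rule out $\pi(1)\in w_2$ in the $x$-factorization you claim this "would force $x=\pi(2)$", but $w_2$ is a \emph{block}, not a single letter, so $x$ could sit several positions to the right with $w_2=\pi(1)\cdots\pi(j-1)$. The correct reason is that $\pi(1)\in w_2$ forces \emph{all} letters left of $x$ to be smaller than $x$, making $x$ a non-initial left-to-right maximum that is a star double ascent or double descent --- impossible in $\D^*_n(132)$ by the structural fact above, which your decomposition-induction does establish; so the gap is repairable from within your own argument and the remaining steps (invariance of $\fl$, $\des=\peak^*+\dd^*$, uniqueness of the $\dd^*=1$ representative, and the binomial bookkeeping giving $t^k(1+t)^{n-2k}$) match the paper's proof.
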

\begin{proof}
Since pattern $132$ is concerned here, per Remark~\ref{dual}, we shall use Lin-Zeng's dual version of the MFS-action $\varphi_x$. In addition, we modify it differently in the following way. This new variant of MFS-action is denoted as $\overline{\varphi}_x$.
\begin{align*}
\overline{\varphi}_x(\pi):=
\begin{cases}
\pi,& \text{if $x$ is a valley, a peak, or a left-to-right maximum of $\pi$;}\\
\varphi_x(\pi),&\text{otherwise}.
\end{cases}
\end{align*}

We state without proving the following facts about $\overline{\varphi}_x$, all of which can be verified similarly as for $\varphi'_x$.
\begin{itemize}
	\item $\overline{\varphi}_x$'s are involutions and commute;
	\item the map $\overline{\varphi}_S$ is closed on $\D^*_n(132)$;
	\item for any $\pi\in\D^*_n(132)$ and each $x\in[n]$, $\fl\:\pi=\fl\:\overline{\varphi}_x(\pi)$.
\end{itemize}

Let $\pi\in\D^*_n(132)$. The above facts, together with a similar argument about the orbits under this new MFS-action, tell us that there is a unique permutation in $\Orb(\pi)$ which has exactly one double descent at the first letter (this is due to the definition of coderangements $\D^*$ and the convention that $\pi(0)=\pi(n)=n+1$). Now, let $\bar\pi$ be this unique element in $\Orb(\pi)$, then $\da^*\:\bar{\pi}=n-1-\peak^*\:\bar{\pi}-\valley^*\:\bar{\pi}$ and $\des\:\bar{\pi}=\peak^*\:\bar{\pi}+1=\valley^*\:\bar{\pi}$. Thus
\begin{align*}
\sum_{\sigma\in\Orb\:\pi}t^{\des\:\sigma}q^{\fl\:\sigma}=
q^{\fl\:\bar{\pi}}t^{\des\:\bar{\pi}}(1+t)^{\da^*\:\bar{\pi}}
=q^{\fl\:\bar{\pi}}t^{\des\:\bar{\pi}}(1+t)^{n-2\des\:\bar{\pi}}.
\end{align*}
Summing over all the orbits establishes \eqref{qgamma-fl}.
\end{proof}

Now we are ready to present the $q$-analogues for all the remaining entries shown in Table~\ref{tab:avoid-3}.

\begin{theorem}\label{qnara-des}
For any $n\ge 1$,
\begin{align}
\label{q132-des-odd}
\sum_{\pi\in\S_n(132)}(-1)^{\des\:\pi}q^{(\bca)\:\pi}&=\sum_{\pi\in\S_n(132)}(-1)^{\des\:\pi}q^{(\bac)\:\pi}=\begin{cases}0 & \emph{if $n$ is even},\\
(-q)^{\frac{n-1}{2}}C_{\frac{n-1}{2}}(q^2) & \emph{if $n$ is odd},
\end{cases}\\
\label{q213-des-odd}
\sum_{\pi\in\S_n(213)}(-1)^{\des\:\pi}q^{(\cab)\:\pi}&=\sum_{\pi\in\S_n(213)}(-1)^{\des\:\pi}q^{(\acb)\:\pi}=\begin{cases}0 & \emph{if $n$ is even},\\
(-q)^{\frac{n-1}{2}}C_{\frac{n-1}{2}}(q^2) & \emph{if $n$ is odd},
\end{cases}\\
\label{q312-des-odd}
\sum_{\pi\in\S_n(312)}(-1)^{\des\:\pi}q^{(\bca)\:\pi}&=\sum_{\pi\in\S_n(312)}(-1)^{\des\:\pi}q^{(\bac)\:\pi}=\begin{cases}0 & \emph{if $n$ is even},\\
(-q)^{\frac{n-1}{2}}C_{\frac{n-1}{2}}(q^2) & \emph{if $n$ is odd},
\end{cases}\\
\label{q132-des-even1}
\sum_{\pi\in\D^*_n(132)}(-1)^{\des\:\pi}q^{(\bca)\:\pi}&=\begin{cases}(-1)^{\frac{n}{2}}\widehat{C}_{\frac{n}{2}}(q) & \emph{if $n$ is even},\\
0 & \emph{if $n$ is odd},
\end{cases}\\
\label{q132-des-even2}
\sum_{\pi\in\D^*_n(132)}(-q)^{\des\:\pi}q^{(\cab)\:\pi}&=\begin{cases}(-q)^{\frac{n}{2}}\overline{C}_{\frac{n}{2}}(q) & \emph{if $n$ is even},\\
0 & \emph{if $n$ is odd},
\end{cases}\\
\label{q213-des-even1}
\sum_{\pi\in\D^*_n(213)}(-1)^{\des\:\pi}q^{(\acb)\:\pi}&=\begin{cases}(-1)^{\frac{n}{2}}q^{\frac{n-2}{2}}C_{\frac{n-2}{2}}(q^2) & \emph{if $n$ is even},\\
0 & \emph{if $n$ is odd},
\end{cases}\\
\label{q213-des-even2}
\sum_{\pi\in\D^*_n(213)}(-q)^{\des\:\pi}q^{(\cab)\:\pi}&=\begin{cases}(-1)^{\frac{n}{2}}q^{\frac{3n-4}{2}}C_{\frac{n-2}{2}}(q^2) & \emph{if $n$ is even},\\
0 & \emph{if $n$ is odd},
\end{cases}
\end{align}
where
\begin{align*}
\widehat{C}_n(q):=\sum\limits_{\pi\in\A_{2n}(231)}q^{(\acb)\:\pi}\quad \emph{and} \quad\overline{C}_n(q):=\sum\limits_{\pi\in\A_{2n}(231)}q^{(\bac)\:\pi}.
\end{align*}
\end{theorem}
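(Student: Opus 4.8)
The plan is to treat the eight identities in three groups of increasing difficulty.

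\emph{The three odd cases and the two $213$ even cases.} Identities \eqref{q132-des-odd}, \eqref{q213-des-odd} and \eqref{q312-des-odd} are immediate from Theorems~\ref{qnara} and \ref{q-nara}: entries $6,10$ (for $132$), $5,9$ (for $213$) and $8,3$ (for $312$) of Table~\ref{ten} each express the relevant sum $\sum_{\pi\in\S_n(\tau)}t^{\des\pi}q^{\stat\pi}$ as $C_n(t,q)$, so setting $t=-1$ and invoking Theorem~\ref{q-nara}, which via interpretation \eqref{comb1-q-nara} evaluates $C_n(-1,q)$ as $0$ for even $n$ and $(-q)^{(n-1)/2}C_{(n-1)/2}(q^2)$ for odd $n$, gives the common right-hand side. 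For \eqref{q213-des-even1} I would apply Lemma~\ref{D*213=nS213} with $t=-1$, turning the sum over $\D^*_n(213)$ into $-\sum_{\pi\in\S_{n-1}(213)}(-1)^{\des\pi}q^{(\acb)\pi}$, and then feed in \eqref{q213-des-odd} for length $n-1$; the extra factor $-1$ shifts the sign exponent from $(n-2)/2$ to $n/2$ and produces exactly the claimed value. For \eqref{q213-des-even2} I would use identity \eqref{idf}, noting that every $\pi\in\D^*_n(213)$ has $\fl\pi=n$ (its first letter is $n$, as in Lemma~\ref{D*213=nS213}); hence $(-q)^{\des\pi}q^{(\cab)\pi}=q^{\,n-1}(-1)^{\des\pi}q^{(\acb)\pi}$, and \eqref{q213-des-even2} drops out of \eqref{q213-des-even1} after collecting the factor $q^{n-1}$.

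\emph{Setting up the two $132$ even cases.} These are the substance of the theorem and parallel the treatment of \eqref{q231-des-even2} in Theorem~\ref{thm:q231-des}. Writing $Q_n(t,q)=\sum_{\pi\in\D^*_n(132)}t^{\des\pi}q^{(\bca)\pi}$, I would substitute $t=-1$ into the convolution \eqref{Q-conv}. Since $C_{n-m-1}(-1,q)=0$ unless $n-m-1$ is odd by \eqref{q-nara-odd}, an induction on $n$ forces every odd-length sum to vanish, giving $Q_{2n+1}(-1,q)=0$, while for even length the surviving terms yield the recurrence $D_n(q)=\sum_{j=0}^{n-1}q^{\,n+j-1}D_j(q)C_{n-j-1}(q^2)$ with $D_0=1$, where $D_n(q):=(-1)^nQ_{2n}(-1,q)$. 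It then suffices to show that $\widehat{C}_n(q)=\sum_{\pi\in\A_{2n}(231)}q^{(\acb)\pi}$ obeys the same recurrence and initial value.

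\emph{The decomposition of $\A_{2n}(231)$.} The heart of the argument is a split of $\A_{2n}(231)$ at its largest letter $2n$, which sits at a peak, i.e.\ an even position $2(n-j)$; avoidance of $231$ forces every letter before $2n$ to be smaller than every letter after it, so $\pi$ factors as $\alpha\,(2n)\,\beta$ with $\alpha\in\A_{2(n-j)-1}(231)$ and $\st(\beta)\in\A_{2j}(231)$. Tracking the vincular pattern $\acb$ through this split, the only new occurrences are those whose adjacent ascent is $(\pi(2(n-j)-1),2n)$, and each of the $2j$ letters of $\beta$ completes exactly one of them, so $(\acb)\pi=(\acb)\alpha+(\acb)\beta+2j$. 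Combined with the building block $\sum_{\pi\in\A_{2k+1}(231)}q^{(\acb)\pi}=q^{k}C_k(q^2)$, which I would extract from the $\gamma$-expansion \eqref{eq:231:132}--\eqref{qgamma-unify} at $t=-1$ together with \eqref{q-nara-odd} (after identifying $\widetilde{\S}_{2k+1,k}(231)$ with $\A_{2k+1}(231)$), this reproduces the recurrence for $D_n$ and establishes \eqref{q132-des-even1}. For \eqref{q132-des-even2} I would first rewrite the summand by \eqref{idf} as $(-q)^{\des\pi}q^{(\cab)\pi}=q^{-1}(-1)^{\des\pi}q^{\fl\pi+(\acb)\pi}$, then observe that $\acb$, like $\fl$, is constant on the orbits of the modified action $\overline{\varphi}_x$ of Lemma~\ref{lem:132-fl} (its nontrivial moves coincide with Br\"and\'en's $\varphi'_x$, under which $\acb$ is invariant by Lemma~\ref{13-2:2-13}); refining \eqref{qgamma-fl} to carry $q^{(\acb)}$ and setting $t=-1$ isolates the single surviving layer $\overline{\D}^*_{n,n/2}(132)$, which I would match with $\A_n(231)$ weighted by $\bac$ to recover $\overline{C}_{n/2}(q)$.

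The main obstacle I anticipate is precisely the statistic bookkeeping in the $\A_{2n}(231)$ decomposition together with the reconciliation of the two boundary conventions, namely the non-$*$ convention governing $\acb,\cab,\fl$ versus the $*$ convention attached to $132$-avoidance and to $\D^*$: an off-by-one in an exponent of $q$ or a miscount of the cross occurrences of the pattern would break the exact match with the recurrence for $D_n$. The refinement of Lemma~\ref{lem:132-fl} to also track $\acb$, and the final weight-preserving bijection $\overline{\D}^*_{n,n/2}(132)\to\A_n(231)$ required for \eqref{q132-des-even2}, are the two further places where care will be needed.
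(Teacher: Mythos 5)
Your proposal is correct and follows essentially the same route as the paper: the odd cases by $t=-1$ in Theorem~\ref{qnara} plus \eqref{q-nara-odd}; the $213$ even cases via Lemma~\ref{D*213=nS213}, \eqref{idf} and $\fl\:\pi=n$; \eqref{q132-des-even1} by specializing \eqref{Q-conv} at $t=-1$ and verifying the resulting convolution through the split of $\A_{2n}(231)$ at the letter $2n$ (the exact analogue of the paper's treatment of \eqref{q231-des-even2}, with your derivation of $\sum_{\pi\in\A_{2k+1}(231)}q^{(\acb)\:\pi}=q^kC_k(q^2)$ from \eqref{eq:231:132} at $t=-1$ an equivalent in-paper alternative to citing Proposition~\ref{cnq: three sets} and Corollary~\ref{gammacoef-othercomb}); and \eqref{q132-des-even2} via \eqref{idf}, \eqref{qgamma-fl} and the reversal onto $\A_{2n}(231)$. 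The one superfluous step is your planned refinement of Lemma~\ref{lem:132-fl} to carry $q^{(\acb)}$: since $\S_n(\acb)=\S_n(132)$ by Lemma~\ref{231-321}, one has $(\acb)\:\pi=0$ for every $\pi\in\D^*_n(132)$, so \eqref{idf} collapses the weight to $q^{\fl\:\pi-1}$ and Lemma~\ref{lem:132-fl} applies as stated, which is precisely how the paper proceeds.
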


\begin{proof}
\eqref{q132-des-odd}--\eqref{q312-des-odd} follow directly by taking $t=-1$ in Theorem~\ref{qnara} and applying \eqref{q-nara-odd}. The proof of \eqref{q132-des-even1} parallels that of \eqref{q231-des-even2}, only that we use the decomposition in \eqref{Q-conv} this time. To prove \eqref{q132-des-even2}, we first note that
$$\sum_{\pi\in\D^*_n(132)}(-q)^{\des\:\pi}q^{(\cab)\:\pi}=\sum_{\pi\in\D^*_n(132)}(-1)^{\des\:\pi}q^{\des\:\pi+(\cab)\:\pi}\stackrel{\eqref{idf}}
{=}\sum_{\pi\in\D^*_n(132)}(-1)^{\des\:\pi}q^{\fl\:\pi-1},$$ which gives directly the odd $2n+1$ case in view of the expansion \eqref{qgamma-fl}. For the even $2n$ case, we compute using \eqref{qgamma-fl} again that
$$\sum_{\pi\in\D^*_{2n}(132)}(-1)^{\des\:\pi}q^{\fl\:\pi-1}=(-1)^n\sum_{\pi\in\overline{\D}^*_{2n,n}(132)}q^{\fl\:\pi-1}\stackrel{\eqref{idf}}
{=}(-q)^n\sum_{\pi\in\overline{\D}^*_{2n,n}(132)}q^{(\cab)\:\pi}.$$
Moreover, we note that $\pi\in \overline{\D}^*_{2n,n}(132)$ if and only if $\pi^r\in \A_{2n}(231)$, which implies \eqref{q132-des-even2}.

Finally, \eqref{q213-des-even1} follows from \eqref{q213-des-odd} and Lemma~\ref{D*213=nS213}. In view of the similarity between \eqref{q213-des-even1} and \eqref{q213-des-even2}, it is a straightforward calculation basing on identity \eqref{idf} and that $\fl\:\pi=n$ for any $\pi\in\D^*_n(213)$, as pointed out in the proof of Lemma~\ref{D*213=nS213}.
\end{proof}

The first few values for $\widehat{C}_n(q)$ and $\overline{C}_n(q)$ are:
\begin{align*}
\widehat{C}_0(q) &=\widehat{C}_1(q) =1,\\
\widehat{C}_2(q) &=q+q^2,\\
\widehat{C}_3(q) &=q^2+q^3+q^4+q^5+q^6,\\
\widehat{C}_4(q) &=q^3+q^4+2q^5+2q^6+2q^7+q^8+2q^9+q^{10}+q^{11}+q^{12},\\
\widehat{C}_5(q) &=q^4+q^5+3q^6+3q^7+4q^8+3q^9+5q^{10}+3q^{11}+4q^{12}+3q^{13}+3q^{14}\\
&\phantom{=}+2q^{15}+2q^{16}+2q^{17}+q^{18}+q^{19}+q^{20},\\
\overline{C}_0(q) &=\overline{C}_1(q)=1,\\
\overline{C}_2(q) &=1+q,\\
\overline{C}_3(q) &=1+2q+2q^2,\\
\overline{C}_4(q) &=1+3q+5q^2+5q^3,\\
\overline{C}_5(q) &=1+4q+9q^2+14q^3+14q^4,\\
\overline{C}_6(q) &=1+5q+14q^2+28q^3+42q^4+42q^5.
\end{align*}

The $q$-Catalan numbers $\overline{C}_n(q)$ merit some further investigation for their own sake. First we utilize \eqref{idf} again to get another interpretation for $\overline{C}_n(q)$: $$\sum\limits_{\pi\in\A_{2n}(231)}q^{(\bac)\:\pi}=\sum\limits_{\pi\in\A_{2n}(231)}q^{(\cab)\:\pi^r}\stackrel{\eqref{idf}}{=}q^{-n-1}\sum\limits_{\pi\in\A_{2n}(231)}q^{\fl\:\pi^r}.$$

\begin{Def}
Let $\overline{C}_n(q)=q^{-n-1}\sum\limits_{\pi\in\A_{2n}(231)}q^{\fl\:\pi^r}:=\sum\limits_{k=0}^{n-1}a_{n, k}q^k$, where
\begin{align*}
\a_{n, k}=\{\pi\in \A_{2n}(231): \fl\:\pi^r=n+k+1\}\,\, \text{and} \,\, a_{n, k}=|\a_{n, k}|.
\end{align*}
\end{Def}
The first few examples are~:
\begin{align*}
\a_{1,0}&=\{12\};\\
\a_{2,0}&=\{1423\}\quad\text{and}\quad \a_{2,1}=\{1324\};\\
\a_{3,0}&=\{162534\}, \;\a_{3,1}=\{162435, 132645\}\quad\text{and}\quad
\a_{3,2}=\{132546, 152436\}.
\end{align*}

Recall that the  ballot numbers $f(n,k)$ satisfy  (see \cite{Aig08, CZ15}) the recurrence relation
\begin{align}\label{eq:ballot}
f(n,k)=f(n,k-1)+f(n-1, k),\quad (n,k\geq 0),
\end{align}
where $f(n,k)=0$ if $n<k$ and $f(0,0)=1$, and have the explicit formula
\begin{align*}
f(n,k)=\frac{n-k+1}{n+1}\binom{n+k}{k},\quad (n\geq k\geq 0).
\end{align*}
With the initial values $a_{1, 0}=a_{2, 0}=a_{2, 1}=1$, and comparing \eqref{eq:ballot} and \eqref{balrec}, we establish the following connection.
\begin{proposition}
For $0\leq k\leq n-1$,
\begin{align*}
a_{n, k}=f(n-1,k)=\frac{n-k}{n}\binom{n-1+k}{k}.
\end{align*}
\end{proposition}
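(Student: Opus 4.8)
The plan is to realize the numbers $a_{n,k}$ as a last-entry refinement of $|\A_{2n}(231)|$, derive for them exactly the ballot recurrence \eqref{eq:ballot}, and then match initial conditions. Since $\pi^r$ reverses $\pi$, we have $\fl\:\pi^r=\pi(2n)$, so
\[
\a_{n,k}=\{\pi\in\A_{2n}(231):\pi(2n)=n+k+1\}.
\]
Two structural observations organize the argument. First, every $\pi\in\A_{2n}(231)$ satisfies $\pi(1)=1$: if $\pi(1)=a>1$ and the value $1$ sat at some (necessarily valley) position $p\ge 3$, then $(\pi(1),\pi(2),\pi(p))=(a,\pi(2),1)$ with $1<a<\pi(2)$ would be an occurrence of $231$. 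Second, $231$-avoidance gives the decomposition at the maximum, $\pi=L\,(2n)\,R$, in which every letter of $L$ is smaller than every letter of $R$; the alternating requirement then forces the position of $2n$ to be even, turns $L$ into an up-down permutation of odd length on $\{1,\dots,|L|\}$ and $R$ into an up-down permutation of even length on the complementary top values, and one checks directly that no $231$ pattern can straddle $2n$.

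First I would isolate the easy half of the recurrence. If $2n$ occupies position $2$, then $L=\{1\}$, so $\pi=1\,(2n)\,R$; deleting the first two letters and standardizing $R$ is a bijection onto $\A_{2n-2}(231)$ that sends the last entry $n+k+1$ to $(n-1)+k+1$, hence onto $\a_{n-1,k}$. This accounts for $a_{n-1,k}$ of the permutations in $\a_{n,k}$. The complementary family, in which $2n$ lies strictly to the right of position $2$, I claim is in bijection with $\a_{n,k-1}$ by a structure-preserving move that lowers the last entry by one rank; granting this, we obtain the ballot recurrence
\[
a_{n,k}=a_{n-1,k}+a_{n,k-1}\qquad(1\le k\le n-1),
\]
with the conventions $a_{n,k}=0$ for $k<0$ or $k\ge n$, and $a_{n,0}=1$. (Equivalently, summing the maximum-decomposition over all admissible positions of $2n$ yields the Catalan convolution $a_{n,k}=\sum_{i=0}^{k}C_i\,a_{n-1-i,k-i}$ together with $a_{n,n-1}=C_{n-1}$, which is the robust form of the recurrence should the clean two-term bijection prove awkward.)

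To finish, I would argue by induction on $n$. The base values $a_{1,0}=a_{2,0}=a_{2,1}=1$ are recorded, and the displayed recurrence is formally the specialization $f(n-1,k)=f(n-1,k-1)+f(n-2,k)$ of the ballot recurrence \eqref{eq:ballot}; since the boundary behavior also agrees ($f(n-1,k)=0$ precisely when $k\ge n$, and $f(n-1,0)=1$), induction gives $a_{n,k}=f(n-1,k)$, and the closed form $a_{n,k}=\frac{n-k}{n}\binom{n-1+k}{k}$ is the standard evaluation of the ballot number $f(n-1,k)$.

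The main obstacle is the complementary bijection of the second paragraph: one must exhibit a rank-lowering of the last entry from $n+k+1$ to $n+k$ and verify that it preserves both the up-down shape and $231$-avoidance and is a bijection onto all of $\a_{n,k-1}$, with the boundary case $k=n-1$ (last entry equal to the global maximum $2n$, where $\a_{n-1,n-1}=\varnothing$) handled separately. The convolution form offers a safer alternative, reducing the proposition to the known fact that the ballot numbers satisfy the same Catalan convolution, which itself follows from \eqref{eq:ballot} by a routine induction.
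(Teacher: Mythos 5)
Your primary route---splitting $\a_{n,k}$ by the position of the maximum $2n$ and mapping the position-$2$ family onto $\a_{n-1,k}$ by deleting the first two letters---is sound as far as it goes (and your boundary discussion at $k=n-1$ is correct), but the complementary half is exactly where the content of the proposition lives: the ``rank-lowering'' bijection onto $\a_{n,k-1}$ is never constructed, and its existence is \emph{equivalent} to the recurrence \eqref{balrec} you are trying to prove. The paper's proof consists precisely of such constructions, organized by a different split: it partitions $\a_{n+1,k}$ according to whether the value $\pi(2n)-1$ is a peak or a valley, sends the peak case to $\a_{n+1,k-1}$ by transposing the values $\pi(2n)$ and $\pi(2n)-1$, and handles the valley case by first proving the nontrivial structural lemma \eqref{last3} that the last three letters are forced to be $(v+1,v-1,v)$ with $v=\pi(2n+2)$, after which deleting the last two letters gives a bijection onto $\a_{n,k}$. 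Note that even your completed half differs from the paper's (first two letters deleted versus last two), so the two-term schemes are genuinely different, and yours would still require work of comparable substance.

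Your fallback, however, is a complete and genuinely different proof, and I would promote it to the main argument. The block decomposition $\pi=L\,(2n)\,R$ with $L<R$ is correctly justified, and with $|\A_{2j-1}(231)|=C_{j-1}$ from \eqref{alt-231=Cn} it gives
\begin{align*}
a_{n,k}=\sum_{i=0}^{k}C_i\,a_{n-1-i,k-i}\quad(0\le k\le n-2),\qquad a_{n,n-1}=C_{n-1},
\end{align*}
the boundary term coming from $R=\varnothing$; this system plus $a_{1,0}=1$ determines the array uniquely. Verifying that $f(n-1,k)$ satisfies the same system does follow from \eqref{eq:ballot} by induction, though ``routine'' undersells the top case $k=m-1$ of the identity $f(m,k)=\sum_i C_i f(m-1-i,k-i)$, which reduces to the classical convolution $\sum_{i=0}^{m-1}C_iC_{m-1-i}=C_m$ together with the evaluation $f(m,m-1)=C_m$. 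What each approach buys: the paper obtains a fully bijective proof of the two-term ballot recurrence itself, while your route needs only the classical decomposition at the maximum and the known enumeration \eqref{alt-231=Cn}, at the price of an algebraic uniqueness step on the ballot array.
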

\begin{proof}
For $n,k\geq 0$ let $a_{0,0}=1$ and $a_{n, k}=0$ if $k\geq n$ or $k<0$.
It suffices to prove
the following  recurrence relation for $a_{n, k}$:
\begin{equation}\label{balrec}
a_{n+1, k}=a_{n+1, k-1}+a_{n,k}.
\end{equation}
First note two useful facts for any $\pi\in\A_{2n}(231)$.
\begin{enumerate}[a)]
\item $\fl\:\pi=1$, since otherwise $(\pi(1), \pi(2), 1)$ will form a $231$ pattern.
\item $\pi(1)<\pi(3)<\cdots<\pi(2n-1)$, i.e., the valleys of $\pi$ form an increasing subsequence.
\end{enumerate}
Due to fact a), we can assume $\fl\:\pi^r=\pi(2n)>1$. Now we decompose $\a_{n,k}$ as the union of two disjoint subsets:
\begin{align*}
\a^{p}_{n,k}&:=\{\pi\in\a_{n,k} : \pi(2n)-1 \text{ is a peak}\},\\
\a^{v}_{n,k}&:=\{\pi\in\a_{n,k} : \pi(2n)-1 \text{ is a valley}\}.
\end{align*}
We proceed to show that $|\a^{p}_{n+1,k}|=|\a_{n+1,k-1}|$ and $|\a^{v}_{n+1,k}|=|\a_{n,k}|$ via two bijections $\alpha: \a^{p}_{n+1,k}\rightarrow\a_{n+1,k-1}$ and $\beta: \a^{v}_{n+1,k}\rightarrow \a_{n,k}$, and thus proving \eqref{balrec}.

The first map $\alpha$ is relatively easier. For any $\pi\in\a^{p}_{n+1,k}$, we get its image $\alpha(\pi)$ by switching the position of two peaks $\pi(2n)$ and $\pi(2n)-1$. A moment of reflection should reveal that $\alpha: \a^{p}_{n+1,k}\rightarrow\a_{n+1,k-1}$ is indeed well-defined and bijective.

We have to lay some ground work for the second map $\beta: \a^{v}_{n+1,k}\rightarrow \a_{n,k}$. The key observation is on the last three letters. We claim that for any $\pi\in\a^{v}_{n+1,k}$,
\begin{align}\label{last3}
\pi(2n)=\pi(2n+2)+1,\;\pi(2n+1)=\pi(2n+2)-1.
\end{align}
First we see $2n+2\neq \pi(2n+2)$, since otherwise $2n+1=\pi(2n+2)-1$ cannot be a valley. So $2n+2$ must be a non-terminal peak. Now notice that $2n+1$ cannot appear to the left of $2n+2$, otherwise it will cause a $231$ pattern. It must also be a peak, since there are no other letters larger than it except for $2n+2$. If $2n+1=\pi(2n+2)$ is the last peak, then $2n$ being a valley forces $(\pi(2n),\pi(2n+1),\pi(2n+2))=(2n+2,2n,2n+1)$, which means \eqref{last3} holds true. Otherwise $2n+1$ is a non-terminal peak and we consider $2n$ next. This deduction must end in finitely many steps since the total number of peaks is $n$ (and finite). At this ending moment we find some $m$ as the last peak, and $2n+2, 2n+1, \ldots, m+1$ are all peaks decreasingly ordered to its left, then $m-1$ being a valley, together with fact b) force us to have \eqref{last3} again. So the claim is proved.

The definitions and validity of $\beta$ and its inverse become transparent, in view of \eqref{last3}.
\begin{itemize}
	\item[$\beta$:] For $\pi\in\a^{v}_{n+1,k}$, delete $\pi(2n+1)$ and $\pi(2n+2)$, then decrease the remaining letters larger than $\pi(2n+2)$ by $2$.
	\item[$\beta^{-1}$:] For $\sigma\in\a_{n,k}$, increase the letters no less than $\sigma(2n)$ by $2$, and append two letters $\sigma(2n)$ and $\sigma(2n)+1$ to the right of $\sigma$, in that order.
\end{itemize}
The proof ends here and we give the following example for illustration.
\end{proof}

\begin{example}
The two bijections $\alpha: \a^{p}_{n+1,k}\to \a_{n+1,k-1}$ and $\beta: \a^{v}_{n+1,k}\to \a_{n,k}$ for the case of $n=3$ are shown below.
\begin{align*}
\a^p_{4,3}\left\{
\begin{array}{c}
13254768 \\
13274658 \\
15243768 \\
17243658 \\
17263548
\end{array}
\right.\qquad & \xlongrightarrow{\alpha} \qquad \left.
\begin{array}{c}
13254867 \\
13284657 \\
15243867 \\
18243657 \\
18263547
\end{array}
\right\} \a_{4,2} \\
\a^p_{4,2}\left\{
\begin{array}{c}
13284657 \\
18243657 \\
18263547
\end{array}
\right.\qquad & \xlongrightarrow{\alpha} \qquad \left.
\begin{array}{c}
13284756 \\
18243756 \\
18273546
\end{array}
\right\} \a_{4,1} \\
\a^p_{4,1}\left\{
\begin{array}{c}
18273546
\end{array}
\right.\qquad & \xlongrightarrow{\alpha} \qquad \left.
\begin{array}{c}
18273645
\end{array}
\right\} \a_{4,0}\\
\a^v_{4,2}\left\{
\begin{array}{c}
13254867 \\
15243867
\end{array}
\right.\qquad & \xlongrightarrow{\beta} \qquad \left.
\begin{array}{c}
132546 \\
152436
\end{array}
\right\} \a_{3,2} \\
\a^v_{4,1}\left\{
\begin{array}{c}
13284756 \\
18243756
\end{array}
\right.\qquad & \xlongrightarrow{\beta} \qquad \left.
\begin{array}{c}
132645 \\
162435
\end{array}
\right\} \a_{3,1}\\
\a^v_{4,0}\left\{
\begin{array}{c}
18273645
\end{array}
\right.\qquad & \xlongrightarrow{\beta} \qquad \left.
\begin{array}{c}
162534
\end{array}
\right\} \a_{3,0}
\end{align*}

\end{example}






\subsection{\texorpdfstring{Other $\exc$-cases avoiding one pattern of length three}{}}
In this subsection we present the parallel $(-1)$-phenomena with respect to $\exc$, note the differences when one compares Table~\ref{tab:exc-avoid3} with Table~\ref{tab:avoid-3}. Unfortunately we have not found any $q$-analogues at this moment.

\begin{table}[tbp]\caption{The $(-1)$-evaluation over $\S_n(\tau)$ and $\D_n(\tau)$ with respect to $\exc$. The definition of the sequence $\{F_n\}$ is given in Conjecture \ref{conj}.}
\label{tab:exc-avoid3}
\centering
\begin{tabular}{ccccccc}
\hline
$\exc\setminus \tau$ &123 &132 &213 &231 &312 &321\\
\hline
$\S_{2n+1}$ &$\star$ &$(-1)^nC_n$ &$(-1)^nC_n$ &$\star$ &$\star$ &$(-1)^nC_n$ \\
$\D_{2n}$ &$(-1)^nF_n$ &$(-1)^nC_n$ &$(-1)^nC_n$ &$\star$ &$\star$ &$(-1)^nC_n$ \\
\hline
\end{tabular}
\end{table}

\begin{theorem}\label{nara-others}
For any $n\ge 1$,
\begin{align}
\label{132-odd}
\sum_{\pi\in\S_n(213)}(-1)^{\exc\:\pi}=\sum_{\pi\in\S_n(132)}(-1)^{\exc\:\pi}&=\begin{cases}0 & \emph{if $n$ is even},\\
(-1)^{\frac{n-1}{2}}C_{\frac{n-1}{2}} & \emph{if $n$ is odd},
\end{cases}\\
\label{132-even}
\sum_{\pi\in\D_n(213)}(-1)^{\exc\:\pi}=\sum_{\pi\in\D_n(132)}(-1)^{\exc\:\pi}&=\begin{cases}(-1)^{\frac{n}{2}}C_{\frac{n}{2}} & \emph{if $n$ is even},\\
0 & \emph{if $n$ is odd}.
\end{cases}
\end{align}
\end{theorem}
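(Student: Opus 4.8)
The plan is to reduce all four signed sums to the already-established evaluations over $\S_n(321)$ and $\D_n(321)$ furnished by Theorem~\ref{q-nara} at $q=1$, using only elementary symmetries to pass from the pattern $213$ to $132$, and invoking a single nontrivial equidistribution to pass from $132$ to $321$.

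First I would settle the equalities between the $213$- and $132$-columns by pure symmetry. The patterns $132$ and $213$ are involutions, so $\S_n(132)$ and $\S_n(213)$ are each closed under the inverse map $\pi\mapsto\pi^{-1}$, under which $\fix\:\pi$ is unchanged while $\exc\:\pi^{-1}=n-\fix\:\pi-\exc\:\pi$ (inversion turns excedances into deficiencies: if $\pi(j)=i$ then $\pi^{-1}(i)>i$ iff $\pi(j)<j$). Since $(132)^{rc}=213$, the reverse--complement $\pi\mapsto\pi^{rc}$ is a bijection $\S_n(132)\to\S_n(213)$, and a direct check gives $\fix\:\pi^{rc}=\fix\:\pi$ together with $\exc\:\pi^{rc}=n-\fix\:\pi-\exc\:\pi$. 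Composing, the map $\pi\mapsto(\pi^{rc})^{-1}$ is a bijection from $\S_n(132)$ onto $\S_n(213)$ preserving the pair $(\fix,\exc)$ exactly. Hence $(\fix,\exc)$ is equidistributed over $\S_n(132)$ and $\S_n(213)$, which yields the first equalities in both \eqref{132-odd} and \eqref{132-even}: the full statement follows by setting the $\fix$-variable to $1$, and the derangement statement by restricting to the $\fix=0$ slice, since both $rc$ and the inverse preserve the fixed-point set.

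Next I would connect $132$ to $321$. The input needed here is that the joint statistic $(\fix,\exc)$ is equidistributed over $\S_n(132)$ and $\S_n(321)$; this is Elizalde's theorem on fixed points and excedances in restricted permutations, and it is consistent with Table~\ref{tab:exc-avoid3}, where exactly $321,132,213$ share the clean phenomenon while $231,312$ are starred. Granting it, $\sum_{\pi\in\S_n(132)}(-1)^{\exc\:\pi}=\sum_{\pi\in\S_n(321)}(-1)^{\exc\:\pi}$, and extracting the $\fix=0$ part gives $\sum_{\pi\in\D_n(132)}(-1)^{\exc\:\pi}=\sum_{\pi\in\D_n(321)}(-1)^{\exc\:\pi}$. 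To keep the argument self-contained one could instead derive the Narayana $J$-fraction for $\sum_n\sum_{\pi\in\S_n(132)}y^{\fix\:\pi}t^{\exc\:\pi}z^n$ directly from the block-decomposition $\pi=\sigma\,n\,\tau$ of $132$-avoiders and match it against the $q=1$ specialization of the continued fraction in Lemma~\ref{SZ10p_0}.

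Finally I would evaluate the two $321$-sums. Setting $q=1$ in Theorem~\ref{q-nara}, equation \eqref{q-nara-odd} gives $\sum_{\pi\in\S_n(321)}(-1)^{\exc\:\pi}=C_n(-1,1)$, equal to $0$ for $n$ even and to $(-1)^{\frac{n-1}{2}}C_{\frac{n-1}{2}}$ for $n$ odd, which is the right-hand side of \eqref{132-odd}; and equation \eqref{q-nara-even} gives $\sum_{\pi\in\D_n(321)}(-1)^{\exc\:\pi}$ equal to $(-1)^{\frac{n}{2}}C_{\frac{n}{2}}$ for $n$ even and $0$ for $n$ odd, the right-hand side of \eqref{132-even}. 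Assembling the three steps finishes the proof. The main obstacle is the middle step: the symmetry reduction $213\leftrightarrow132$ and the $321$-evaluation are routine, whereas the equidistribution of $(\fix,\exc)$ between $132$ and the distinct class of $321$ is the genuinely nontrivial ingredient and the one place where an external result (or a separately established continued-fraction computation) is required.
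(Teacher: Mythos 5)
Your proposal is correct, and its skeleton coincides with the paper's: both arguments reduce the evaluation to $\S_n(321)$ and $\D_n(321)$ via Elizalde's equidistribution of $(\exc,\fix)$ between $\S_n(132)$ and $\S_n(321)$ (the paper's \eqref{321-132-exc-fix}), and then invoke the $q=1$ case of Theorem~\ref{q-nara}. Where you genuinely diverge is the $213\leftrightarrow 132$ step. The paper uses only the reverse--complement, which produces the twisted identity \eqref{132-213-exc-fix} with the factor $t^n t^{-\exc\:\pi-\fix\:\pi}$; at $t=-1$, $y=0$ this gives the derangement equality only up to a sign $(-1)^n$ (harmless, since the odd case vanishes), and for the full-set equality in \eqref{132-odd} the paper must take $t=y=-1$, pass to $\sum_{\pi\in\S_n(321)}(-1)^{\wex\:\pi}$, and evaluate that via the $\gamma$-expansion \eqref{wex-inv-gamma} at $t=-1$, $q=1$. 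You instead observe that $132$ and $213$ are involutions, so both classes are closed under inversion, and that composing $rc$ with the inverse gives a bijection $\S_n(132)\to\S_n(213)$ preserving $(\fix,\exc)$ \emph{exactly} (the two reversals $\exc\mapsto n-\fix-\exc$ cancel). This is more elementary: it dispenses with \eqref{wex-inv-gamma} and the $\wex$ computation altogether, handles \eqref{132-odd} and \eqref{132-even} simultaneously, and avoids the sign bookkeeping. One cosmetic point: $rc$ does not preserve the fixed-point \emph{set} (it sends the fixed point $i$ to $n+1-i$), only its cardinality, but that is all the restriction to the $\fix=0$ slice requires, so your argument stands.
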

\begin{proof}
We first apply the $q=1$ case of Theorem~\ref{q-nara}, and the following identity due to Elizalde \cite{Eli} to derive the second equalities in both \eqref{132-odd} and \eqref{132-even}.
\begin{align}\label{321-132-exc-fix}
\sum_{\pi\in\S_n(321)}t^{\exc\:\pi}y^{\fix\:\pi}=\sum_{\pi\in\S_n(132)}t^{\exc\:\pi}y^{\fix\:\pi}, \quad \text{for $n\ge 1$}.
\end{align}
Next we observe the following facts, which can be easily checked.
\begin{align*}
\pi &\in\S_n(132) \Leftrightarrow \pi^{rc}\in\S_n(213),\\
\exc(\pi) &= n-\exc(\pi^{rc})-\fix(\pi),\: \fix(\pi)=\fix(\pi^{rc}).
\end{align*}
Consequently we have
\begin{align}\label{132-213-exc-fix}
\sum_{\pi\in\S_n(132)}t^{\exc\:\pi}y^{\fix\:\pi} &= t^n\sum_{\pi\in\S_n(213)}t^{-\exc\:\pi-\fix\:\pi}y^{\fix\:\pi}.
\end{align}
Plugging in $t=-1,y=0$ gives us directly the first equality in \eqref{132-even}. Finally, taking $t=y=-1$ in \eqref{132-213-exc-fix}, \eqref{321-132-exc-fix} and $t=-1,q=1$ in \eqref{wex-inv-gamma} leads to:
\begin{align*}
(-1)^n\sum_{\pi\in\S_n(213)}(-1)^{\exc\:\pi}=\sum_{\pi\in\S_n(321)}(-1)^{\wex\:\pi}=\begin{cases}0 & \textrm{if $n$ is even},\\
(-1)^{\frac{n+1}{2}}C_{\frac{n-1}{2}} & \textrm{if $n$ is odd},
\end{cases}
\end{align*}
which is exactly the first equality in \eqref{132-odd}.
\end{proof}


The only non-$\star$ entry in Table~\ref{tab:exc-avoid3} that is not covered by Theorems~\ref{q-nara} or \ref{nara-others} is still a conjecture.

\begin{conj}\label{conj}
For any $n\geq 1$, the polynomials $G_n(t):=\sum\limits_{\pi\in\D_n(123)}t^{\exc\:\pi}$ have nonnegative coefficients in their $\gamma$-expansions. Moreover, there is a sequence $\{F_n\}_{n\ge 1}$ of positive integers such that
\begin{align*}
G_{n}(-1)=\sum_{\pi\in\D_n(123)}(-1)^{\exc\:\pi}&=\begin{cases}(-1)^{\frac{n}{2}}F_{\frac{n}{2}} & \textrm{if $n$ is even},\\
0 & \textrm{if $n$ is odd}.
\end{cases}
\end{align*}
\end{conj}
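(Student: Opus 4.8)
The plan is to isolate the two assertions of Conjecture~\ref{conj} and to treat the palindromicity first, since it already disposes of the odd case and reduces the even case to a single $\gamma$-coefficient. The reverse-complement map $\pi\mapsto\pi^{rc}$ restricts to an involution on $\D_n(123)$, because $123$ is invariant under reverse-complementation and $\pi^{rc}$ is a derangement whenever $\pi$ is. A direct check shows that position $i$ is an excedance of $\pi^{rc}$ exactly when $n+1-i$ is a deficiency of $\pi$, so that $\exc\:\pi^{rc}=n-\exc\:\pi$ for every $\pi\in\D_n(123)$. Hence $G_n(t)$ is palindromic of degree $n$ (with vanishing constant and leading coefficients) and admits a unique $\gamma$-expansion $G_n(t)=\sum_i\gamma_{n,i}\,t^i(1+t)^{n-2i}$ with $\gamma_{n,i}\in\Z$. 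For odd $n$ every basis element vanishes at $t=-1$, whence $G_n(-1)=0$; for $n=2m$ only the middle term survives and $G_{2m}(-1)=(-1)^m\gamma_{2m,m}$. Thus the stated $(-1)$-evaluation holds with $F_m=\gamma_{2m,m}$ as soon as one proves (i) $\gamma_{n,i}\ge0$ for all $i$ and (ii) $\gamma_{2m,m}>0$.

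For the $\gamma$-positivity (i), the natural tool is the excedance analogue of the MFS-action used throughout this paper: the cyclic \emph{valley-hopping} action that toggles each hoppable site of a derangement between a double excedance ($\cda$) and a cyclic valley ($\cvalley$), leaving $\drop$, and hence $\exc=n-\drop$, changed by exactly one. This is the mechanism behind the $\gamma$-positivity of the full derangement polynomial $\sum_{\pi\in\D_n}t^{\exc\:\pi}$, and one would run it exactly as in the proof of Theorem~\ref{thm:des-ai-qgamma}: each orbit carries a distinguished representative $\bar\pi$ with $\cda\:\bar\pi=0$, it contributes $t^{\exc\:\bar\pi}(1+t)^{\#\{\text{hoppable sites}\}}$, and summing over orbits exhibits $\gamma_{n,i}$ as the number of such representatives with $\exc=i$, manifestly nonnegative. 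Granting the closedness discussed below, positivity (ii) then follows by exhibiting one representative of the right kind: the reverse permutation $w_0$ of $[2m]$, given by $w_0(i)=2m+1-i$, lies in $\D_{2m}(123)$ (its longest increasing subsequence has length $1$), satisfies $\exc\:w_0=m$, and, being a product of transpositions, has $\cda\:w_0=0$; it is therefore the canonical representative of its orbit, so $\gamma_{2m,m}\ge1$.

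The crux, and almost surely the reason this case resisted the continued-fraction machinery of Lemma~\ref{Lemma:Shin-Zeng} (no $q$-refinement was found), is the \emph{closedness} of the cyclic valley-hopping action on the subset $\D_n(123)$. In the descent world the corresponding closedness held for an essentially formal reason: avoiding $213,231,132,312$ is equivalent to a vincular-pattern condition (Lemma~\ref{231-321}) whose statistic is constant on MFS-orbits (Lemma~\ref{13-2:2-13}), so the action cannot leave the avoidance class. For $123$ there is no such vincular reformulation that is invariant under cyclic valley-hopping: a single hop rearranges the one-line word and can create or destroy an increasing triple, so a priori the action does not preserve $\D_n(123)$. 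The main task of a full proof is therefore to determine precisely which hops stay inside $\D_n(123)$ and to modify the action (in the spirit of Br\"and\'en's $\varphi'_x$) so that the forbidden hops are suppressed while the $t^i(1+t)^{n-2i}$ bookkeeping is preserved; this is the step I expect to be the genuine obstacle.

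As an alternative route one may try to bypass the action entirely. Via the bijection $\Upsilon$ of Lemma~\ref{SZ12_lem} one has $\exc=n-\drop\mapsto n-\des$ on coderangements, which recasts $G_n(t)$ as a codescent generating function over $\D^*_n$; however $\Upsilon$ does not respect $123$-avoidance, so this only suggests seeking a pattern-preserving bijection from $\D_n(123)$ onto some $\D^*_n(\tau)$ carrying $\exc$ to $n-\des$. The data rule out the cases already settled in Theorem~\ref{thm:q231-des} (one computes $F_1=1$ and $F_2=7$, matching none of the Catalan values there), so the target set, if it exists, is genuinely new. Constructing it, or else directly deriving a weighted Motzkin-path model for $G_n(t)$ whose branch weights are transparently of the form $t^i(1+t)^{n-2i}$, is the most promising concrete line of attack on $\gamma$-positivity, from which both the $\gamma$-nonnegativity claim and the whole $(-1)$-evaluation would follow by the reduction of the first paragraph.
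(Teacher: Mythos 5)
You have not proved the statement, but that is the right outcome to flag clearly: in the paper this is Conjecture~\ref{conj}, and the authors themselves prove nothing beyond the palindromicity of $G_n(t)$, which they obtain by exactly the reverse-complement argument you give ($\pi\mapsto\pi^{rc}$ is stable on $\D_n(123)$, and $\exc\:\pi^{rc}=n-\exc\:\pi-\fix\:\pi$, hence $\exc\:\pi^{rc}=n-\exc\:\pi$ on derangements). Your first paragraph therefore matches everything the paper actually establishes, and it extracts a bit more than the paper states explicitly: palindromicity alone, with integrality of the $\gamma$-coefficients, already yields $G_n(-1)=0$ for odd $n$ and $G_{2m}(-1)=(-1)^m\gamma_{2m,m}$, so the conjecture correctly reduces to (i) $\gamma_{n,i}\ge 0$ and (ii) $\gamma_{2m,m}>0$, with $F_m=\gamma_{2m,m}$. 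That reduction is clean and correct, and your values $F_1=1$, $F_2=7$ agree with the paper's data $F_n=1,7,58,545,5570,\ldots$, as do the displayed expansions such as $G_6(t)=2t^2(1+t)^2+58t^3$.

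The genuine gap is the one you yourself name: closure of a cyclic valley-hopping action on $\D_n(123)$ is unproven, and without it the whole orbit mechanism collapses. Be aware that this undermines not only (i) but also your witness argument for (ii): checking that $w_0(i)=2m+1-i$ lies in $\D_{2m}(123)$ with $\exc\:w_0=m$ and $\cda\:w_0=0$ shows nothing about $\gamma_{2m,m}$ unless an action closed on $\D_n(123)$ exists whose orbits produce the terms $t^i(1+t)^{n-2i}$ --- absent that, the $\gamma$-coefficients need not count $\cda$-free permutations at all, so even $\gamma_{2m,m}\ge 1$ is not established. A terminological slip worth correcting: the cyclic analogue of the MFS-action toggles a value between a double excedance ($\cda$) and a double drop ($\cdd$), while cyclic valleys ($\cvalley$) and cyclic peaks are the fixed sites; your phrasing has the toggle running between $\cda$ and $\cvalley$. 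Your diagnosis of why the problem is hard --- $123$-avoidance admits no vincular reformulation invariant under hopping, unlike the length-three patterns handled via Lemmas~\ref{231-321} and \ref{13-2:2-13}, and the bijection $\Upsilon$ of Lemma~\ref{SZ12_lem} does not respect the pattern --- is accurate and consistent with the authors leaving this entry open; but as submitted, the proposal settles only the odd case and the reduction, not the conjecture.
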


We note that neither $\{G_n(1)\}_{n\ge 1}$ nor $\{F_n\}_{n\ge 1}$ is registered in the OEIS. The first values are given by $G_n(1)=0, 1, 2, 7, 20, 66, 218, 725,\ldots$ and
$F_n=1, 7, 58, 545, 5570, \ldots$.
For the first few $n\geq 1$, we have
\begin{align*}
G_1(t)&=0, G_2(t)=t,\\
G_3(t)&=t+t^2=t(1+t),\quad
G_4(t)=7t^2,\\
G_5(t)&=10t^2+10t^3=10t^2(1+t),\\
G_6(t)&=2t^2+62t^3+2t^4=2t^2(1+t)^2+58t^3,\\
G_7(t)&=109t^3+109t^4=109t^3(1+t),\\
G_8(t)&=45t^3+635t^4+45t^5=45t^3(1+t)^2+545t^4,\\
G_9(t)&=5t^3+1264t^4+1264t^5+5t^6=5t^3(1+t)^3+1249t^4(1+t),\\
G_{10}(t)&=769t^4+7108t^5+769t^6=769t^4(1+t)^2+5570t^5.
\end{align*}
The symmetry of $G_n(t)$ follows from the map $\pi\mapsto \pi^{rc}$, which is stable on $\S_n(123)$ and $\D_n(123)$, and satisfies  $\exc(\pi) = n-\exc(\pi^{rc})-\fix(\pi)$. Thus, if $\pi\in \D_n(123)$, we obtain the symmetry.

\section{Two cases avoiding two patterns of length four}\label{sec6: Sch}
We first enumerate $\A_n(2413,3142)$ and $\A_n(1342,2431)$, then put these results in the context of $(-1)$-evaluations of the descent polynomials over $\S_n(2413,3142)$ and $\S_n(1342,2431)$.

Letting $q=1$ in the last interpretation~\eqref{eq:231:132}
 of Theorem~\ref{thm:des-ai-qgamma} we derive immediately
the following $\gamma$-expansion for Narayana polynomials (see also \cite[Chapter 4]{Pet}).
\begin{align}
\label{gamma:nara}
C_n(t,1) &=\sum_{\pi\in \S_n(231)} t^{\des\:\pi}
=\sum_{k=0}^{\lfloor\frac{n-1}{2}\rfloor}\gamma_{n,k}^N t^k(1+t)^{n-1-2k},
\end{align}
where $\gamma_{n,k}^N:=\gamma_{n,k}(1)$.
The following two $\gamma$-expansions
 \eqref{gamma:sep} and \eqref{gamma:1342}, which  were  obtained recently by Fu-Lin-Zeng~\cite{FLZ} and  Lin\cite{Lin17}, respectively, will be crucial in our $(-1)$-evaluations.
 \begin{align}
 \label{gamma:sep}
S_n(t) &:=\sum_{\pi\in\S_n(2413,3142)}t^{\des\:\pi} =\sum_{k= 0}^{\lfloor\frac{n-1}{2}\rfloor}\gamma_{n,k}^S t^k (1+t)^{n-1-2k},\\
\label{gamma:1342}
Y_n(t) &:=\sum_{\pi\in\S_n(1342,2431)}t^{\des\:\pi} =\sum_{k= 0}^{\lfloor\frac{n-1}{2}\rfloor}\gamma_{n,k}^Y t^k (1+t)^{n-1-2k},
\end{align}
where
\begin{align}
\label{comb:sep}
\gamma_{n,k}^S &=\#\{\pi \in \S_n(2413, 3142): \dd^*\:\pi=0, \des\:\pi=k\},\\
\label{comb:1342}
\gamma_{n,k}^Y &=\#\{\pi \in \S_n(1342, 2431): \dd^*\:\pi=0, \des\:\pi=k\}.
\end{align}
It follows that
\begin{align}\label{alt-4=gamma}
|\A_n(2413,3142)|=\gamma_{n,\lfloor\frac{n-1}{2}\rfloor}^S,\; |\A_n(1342,2431)|=\gamma_{n,\lfloor\frac{n-1}{2}\rfloor}^Y.
\end{align}
Recall the $\gamma$-coefficients in the expansions \eqref{gamma:nara}, \eqref{gamma:sep}--\eqref{gamma:1342}. For $*=N,S,Y$, let
\begin{align*}
\Gamma_*(x,z)&:=\sum_{n=1}^{\infty}\sum_{k=0}^{\lfloor\frac{n-1}{2}\rfloor}\gamma^*_{n,k}x^kz^n
\end{align*}
be the generating functions for $\gamma^N_{n,k},\gamma^S_{n,k}$ and $\gamma^Y_{n,k}$, respectively. We need the following two algebraic equations for $\Gamma_S(x,z)$ and $\Gamma_Y(x,z)$, which were first derived by Lin \cite{Lin17}.
\begin{align}\label{ae-sep}
\Gamma_S&=z+z\Gamma_S+xz\Gamma^2_S+x\Gamma^3_S,\\
\label{ae-1342}
\Gamma_Y&=z+z\Gamma_Y+2xz\Gamma_N\Gamma_Y+x\Gamma^2_N(\Gamma_Y-z).
\end{align}

\subsection{The case of (2413,3142)--avoiding alternating permutations.}

\begin{theorem}\label{thm:alt-2413-odd}
Let $r_{n}:=|\A_{2n+1}(2413,3142)|,\:n\ge 0$, $R(x):=\sum\limits_{n=1}^{\infty}r_{n}x^n$, then
\begin{align}\label{sep-odd-gf}
R(x)=x(R(x)+1)^2+x(R(x)+1)^3.
\end{align}
Consequently, $r_0=1$ and for $n\ge 1$,
\begin{align}\label{sep-odd}
r_{n}=\dfrac{2}{n}\sum_{i=0}^{n-1}2^{i}\binom{2n}{i}\binom{n}{i+1}.
\end{align}
\end{theorem}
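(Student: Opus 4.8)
The plan is to identify $r_n$ as the top diagonal coefficient of $\Gamma_S(x,z)$, extract that diagonal directly from the algebraic equation \eqref{ae-sep}, and then finish with Lagrange inversion. First I would note that, since $\lfloor(2n+1-1)/2\rfloor=n$, the relation \eqref{alt-4=gamma} gives $r_n=\gamma^S_{2n+1,n}=[x^n z^{2n+1}]\Gamma_S(x,z)$; thus $r_n$ is exactly the highest ($k=\lfloor(m-1)/2\rfloor$) coefficient of $\Gamma_S$ along odd lengths $m=2n+1$, and $R(x)$ packages these top coefficients.

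The first key step is a diagonal substitution. Because $\gamma^S_{m,k}=0$ unless $k\le\lfloor(m-1)/2\rfloor$, every monomial $x^k z^m$ occurring in $\Gamma_S$ satisfies $m-2k\ge 1$. Hence the formal substitution $x=w/z^2$ produces a genuine power series $\Phi(w,z):=\Gamma_S(w/z^2,z)=\sum_{m,k}\gamma^S_{m,k}\,w^k z^{m-2k}$ in $z$ whose lowest-order term is $z^1$, with $[z^1]\Phi=\sum_{k}\gamma^S_{2k+1,k}w^k=1+R(w)$ (the $k=0$ term supplying $\gamma^S_{1,0}=r_0=1$). Substituting $x=w/z^2$ into \eqref{ae-sep} gives the power-series identity
\[
\Phi=z+z\Phi+\tfrac{w}{z}\Phi^2+\tfrac{w}{z^2}\Phi^3,
\]
which is legitimate since $\Phi=O(z)$ forces $\tfrac{w}{z}\Phi^2=O(z)$ and $\tfrac{w}{z^2}\Phi^3=O(z)$. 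Writing $\Phi=\phi_1 z+O(z^2)$ with $\phi_1=1+R(w)$ and comparing coefficients of $z^1$ yields $1+R=1+w\phi_1^2+w\phi_1^3$, i.e. $R=w(R+1)^2+w(R+1)^3$; renaming $w$ as $x$ gives precisely \eqref{sep-odd-gf}.

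The second step derives \eqref{sep-odd} from \eqref{sep-odd-gf} by Lagrange inversion. Factoring, \eqref{sep-odd-gf} reads $R=x\,\phi(R)$ with $\phi(R)=(R+1)^2+(R+1)^3=(R+1)^2(R+2)$; since $\phi(0)=2\neq0$, Lagrange inversion applies and gives $r_n=[x^n]R=\tfrac1n[R^{n-1}]\phi(R)^n=\tfrac1n[R^{n-1}](R+1)^{2n}(R+2)^n$. Expanding $(R+1)^{2n}=\sum_j\binom{2n}{j}R^j$ and $(R+2)^n=\sum_i\binom{n}{i}2^{n-i}R^i$ and collecting the coefficient of $R^{n-1}$ gives $r_n=\tfrac1n\sum_{i=0}^{n-1}\binom{n}{i}2^{n-i}\binom{2n}{n-1-i}$; the reindexing $i\mapsto n-1-i$, which sends $\binom{n}{i}\mapsto\binom{n}{i+1}$, $2^{n-i}\mapsto 2^{i+1}$ and $\binom{2n}{n-1-i}\mapsto\binom{2n}{i}$, then turns this into $\tfrac{2}{n}\sum_{i=0}^{n-1}2^i\binom{2n}{i}\binom{n}{i+1}$, which is \eqref{sep-odd}.

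I expect the main obstacle to be the rigorous bookkeeping for the diagonal substitution: one must confirm that $m-2k\ge 1$ makes $\Phi(w,z)$ an honest power series in $z$ with no surviving negative powers, that each nonlinear term stays $O(z)$ so the comparison at $z^1$ is valid, and that $[z^1]\Phi$ captures exactly the target family $\{\gamma^S_{2k+1,k}\}$ and nothing else. Once this is in place, the Lagrange-inversion computation and the binomial reindexing are routine, and a quick sanity check ($r_0=1$, $r_1=2$ from both \eqref{sep-odd-gf} and \eqref{sep-odd}) confirms the normalization.
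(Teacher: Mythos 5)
Your proposal is correct and follows essentially the same route as the paper: both identify $r_n=\gamma^S_{2n+1,n}$ via \eqref{alt-4=gamma}, extract the top diagonal of Lin's equation \eqref{ae-sep} to obtain \eqref{sep-odd-gf}, and finish with Lagrange inversion applied to $R=x(R+1)^2(R+2)$. The only difference is in the mechanics of one step: where the paper compares the coefficients $[x^{n-1}z^{2n}]\Gamma_S^2$ and $[x^{n-1}z^{2n+1}]\Gamma_S^3$ using the degree bound $j+k\le n-1$ to derive the convolution recurrence for $r_n$, you package exactly the same bookkeeping into the diagonal substitution $x=w/z^2$ and read off the coefficient of $z^1$, a valid and arguably cleaner reformulation.
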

\begin{proof}
First, \eqref{alt-4=gamma} gives us $r_n=\gamma^S_{2n+1,n}$. Therefore, in order to get a recurrence relation for $r_n$, we should extract the coefficient of $z^{2n+1}$ in \eqref{ae-sep} and then compare the coefficients of $x^n$ from both sides. This gives us, for $n\ge 1$,
$$r_n=[x^{n-1}]\left([z^{2n}]\Gamma^2_S(x,z)\right)+[x^{n-1}]\left([z^{2n+1}]\Gamma^3_S(x,z)\right).$$
Now we take a closer look at $[z^{2n}]\Gamma^2_S(x,z)$.
$$[z^{2n}]\Gamma^2_S(x,z)=\sum_{m=1}^{2n-1}\left(\sum_{j=0}^{\lfloor\frac{m-1}{2}\rfloor}\gamma^S_{m,j}x^j\right)\cdot\left(\sum_{k=0}^{\lfloor\frac{2n-m-1}{2}\rfloor}\gamma^S_{2n-m,k}x^k\right),$$
So for each term in this summation, the power of $x$ is $$j+k\le\left\lfloor\frac{m-1}{2}\right\rfloor+\left\lfloor\frac{2n-m-1}{2}\right\rfloor\le n-1.$$
Hence we get contributions for $x^{n-1}$ only from odd $m$'s, with $j$ and $k$ both achieving their maxima. Similar analysis applies to the term involving $\Gamma^3_S$ and the details are omitted. All these amount to
$$r_n=\sum_{m=0}^{n-1}r_mr_{n-m-1}+\sum_{m,l=0}^{n-1}r_mr_lr_{n-m-l-1}.$$
In terms of the generating function $R(x)$, we obtain \eqref{sep-odd-gf}. Next we rewrite \eqref{sep-odd-gf} as
\begin{align}\label{sep-even-lag}
x=\frac{R}{(R+1)^2(R+2)},
\end{align}
which is ripe for applying the Lagrange inversion (cf. \cite{FSe}). A straightforward computation leads to \eqref{sep-odd} and completes the proof.
\end{proof}

\begin{theorem}
Let $t_n:=|\A_{2n}(2413,3142)|,\:n\ge 1$, $T(x):=\sum\limits_{n=1}^{\infty}t_nx^n$, then
\begin{align}\label{sep-even-gf}
\frac{1}{2}R(x)=\frac{1}{2}R(x)\cdot T(x)+T(x).
\end{align}
Consequently, $t_1=1$ and for $n\ge 2$,
\begin{align}\label{sep-even}
t_{n}=\dfrac{4}{n-1}\sum_{i=0}^{n-2}2^{i}\binom{2n-1}{i}\binom{n-1}{i+1}.
\end{align}
\end{theorem}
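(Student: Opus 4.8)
The plan is to establish the functional equation \eqref{sep-even-gf} first and then extract \eqref{sep-even} by Lagrange inversion, mirroring the proof of Theorem~\ref{thm:alt-2413-odd}. Rearranging \eqref{sep-even-gf} gives $T=R/(R+2)$, and feeding in the cubic \eqref{sep-odd-gf} (equivalently \eqref{sep-even-lag}) collapses this to the clean product form $T=x(R+1)^2$; at the level of coefficients it reads
\[
t_n=[x^{n-1}]\bigl((R+1)^2\bigr)=\sum_{a+b=n-1}r_ar_b\qquad(r_0:=1),
\]
which I would first test against $t_1=1$, $t_2=r_0r_1+r_1r_0=4$ and $t_3=r_0r_2+r_1^2+r_2r_0=24$. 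Thus the genuine combinatorial content is the identity $T=x(R+1)^2$, asserting a length-preserving correspondence between $\A_{2n}(2413,3142)$ and ordered pairs of odd-length alternating separable permutations of total length $2n$.

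The hard part will be proving $T=x(R+1)^2$ honestly. One must resist reading $t_n$ off the top $x$-coefficient of $[z^{2n}]\Gamma_S$ the way $r_n=\gamma^S_{2n+1,n}$ was read off in Theorem~\ref{thm:alt-2413-odd}: for even length the ``maximal $\des$, $\dd^*=0$'' description behind \eqref{comb:sep} admits one double ascent, so $\gamma^S_{2n,n-1}$ strictly over-counts the genuinely up-down permutations (already $\gamma^S_{4,1}=7$ whereas $|\A_4(2413,3142)|=4$). I would therefore argue directly on alternating separable permutations, using the substitution (separating-tree) decomposition into direct sums $\oplus$ and skew sums $\ominus$. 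The subtlety is that gluing two \emph{up-down} odd blocks by $\oplus$ creates a double ascent at the seam, so one must instead glue an up-down block to a \emph{down-up} block (and dually for $\ominus$), invoking the reverse-complement symmetry that the numbers of up-down and down-up separable permutations of each length coincide. Because the seam absorbs no letters, two odd blocks of lengths $2a+1$ and $2b+1$ assemble into an even permutation of length $2(a+b+1)$, which is the source of the convolution $\sum_{a+b=n-1}r_ar_b$; the delicate point, and the main obstacle, is a recursive unfolding showing that \emph{every} element of $\A_{2n}(2413,3142)$ — including genuinely two-even-block permutations such as $3412=12\ominus12$ — arises exactly once.

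If a clean bijection proves elusive, the fallback is purely generating-functional: set up a small system of functional equations for the up-down and down-up separable series, refined by the top-level $\oplus/\ominus$ type and by the parity of block starting positions in the spirit of \eqref{ae-sep}, solve it, and separate the even and odd parts to recover $R$, $T$ and hence \eqref{sep-even-gf} directly from the algebra. Either way, once $t_n=[x^{n-1}]\bigl((R+1)^2\bigr)$ is in hand I would finish by Lagrange--B\"urmann inversion applied to $x=R/\phi(R)$ with $\phi(w)=(w+1)^2(w+2)$ coming from \eqref{sep-even-lag}: taking $H(w)=(w+1)^2$,
\[
t_n=[x^{n-1}]H(R)=\frac{1}{n-1}[w^{n-2}]\bigl(H'(w)\phi(w)^{n-1}\bigr)=\frac{2}{n-1}[w^{n-2}]\bigl((w+1)^{2n-1}(w+2)^{n-1}\bigr),
\]
and expanding both binomials while matching the coefficient of $w^{n-2}$ yields \eqref{sep-even}; the case $n=1$ giving $t_1=[x^0]((R+1)^2)=1$ is handled separately. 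The only genuinely new difficulty is the identity of the previous paragraph, the inversion step being routine.
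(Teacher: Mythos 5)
Your reduction of the theorem to the single identity $T=x(R+1)^2$ (equivalently $t_n=\sum_{a+b=n-1}r_ar_b$ with $r_0=1$), your numerical checks, and your Lagrange--B\"urmann computation are all correct and coincide with the paper's final step; your warning about the $\gamma$-coefficients is moreover a sharp catch, since $\gamma^S_{2n,n-1}$ allows one double ascent and hence over-counts (indeed $\gamma^S_{4,1}=7$ while $t_2=4$, so \eqref{alt-4=gamma} is only valid, and in the paper only used, for odd $n$). The genuine gap is exactly where you flag it: the identity itself is never proven. The glue-two-odd-blocks plan fails as stated --- for example $1324=1\oplus 213=132\oplus 1$ admits two such decompositions into odd blocks, while $3412=12\ominus 12$ admits none --- so the convolution $\sum_{a+b=n-1}r_ar_b$ neither injects into nor surjects onto $\A_{2n}(2413,3142)$ via naive gluing, and the ``recursive unfolding'' needed to repair this is precisely the hard content you defer; the generating-function fallback (a refined system for up-down and down-up separable series) is named but never set up or solved. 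As it stands, \eqref{sep-even-gf} remains unestablished, and everything downstream is conditional on it.

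The idea you are missing, which is how the paper proceeds, is to prove the \emph{linear} relation $\tfrac12 R=\tfrac12 R\cdot T+T$ directly instead of the quadratic $T=x(R+1)^2$, by breaking the reversal symmetry with a statistic: call $\pi$ \emph{normal} if $1$ precedes $n$. Reversal is a fixed-point-free involution on odd-length ($\geq 3$) alternating separable permutations exchanging normal with non-normal elements, so $\tfrac12 R(x)$ generates the normal ones; the paper's Claim~\ref{claim} then assigns to each normal $\pi$ a \emph{unique} decomposition $\pi=\pi^{(1)}\oplus\pi^{(2)}$ by taking $\pi^{(1)}$ of minimal length (forcing $\pi^{(1)}=1$ or $\pi^{(1)}$ $\ominus$-decomposable, hence non-normal), with $\pi^{(2)}$ of even length and reverse-alternating, yielding $\tfrac12 R=\tfrac12 R\,T+T$ with no over- or under-counting. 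This one-sided decomposition sidesteps your obstruction entirely: a permutation such as $3412$ only ever occurs as an even block $\pi^{(2)}$ and never needs a two-odd-block splitting, while minimality of $\pi^{(1)}$ enforces uniqueness. The quadratic form $T=x(R+1)^2$ is then recovered algebraically from \eqref{sep-even-lag}, exactly as in your inversion step; proving your convolution combinatorially from scratch would in effect re-derive the cubic \eqref{sep-odd-gf} on top of this linear relation, which is more than the statement requires.
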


\begin{proof}
It may still be possible to establish \eqref{sep-even-gf} using the algebraic equation \eqref{ae-sep}, but this time we present a combinatorial argument, showing both sides generate the same set of permutations.

The first thing to notice is that for an alternating permutation $\pi\in\A_{2n+1}(2413,3142)$, $n\ge 1$, its reverse $\pi^r\neq \pi$ is also in $\A_{2n+1}(2413,3142)$. This implies that $r_n$ is even for $n\ge 1$. Moreover, we call a permutation $\pi\in\S_n, n\ge 2$ {\em normal} if $1$ appears to the left of $n$. For example, there are three normal permutations in $\S_3$: $213,123,132$. Now we see that exactly one permutation in the pair $\{\pi,\pi^r\}$ is normal, and consequently the number of normal permutations in $\A_{2n+1}(2413,3142)$ is $r_n\slash 2$. Therefore the left-hand side of \eqref{sep-even-gf} generates all normal, alternating, and $(2413,3142)$-avoiding permutations of odd length larger than $1$. Next we show that the right-hand side does precisely the same. The following claimed decomposition, whose proof given separately, is the key ingredient. Recall two classical operations, the \emph{direct sum} ``$\oplus$'' and the \emph{skew sum} ``$\ominus$''. If $\pi=\pi^{(1)}\pi^{(2)}$ with $\pi^{(1)}<\pi^{(2)}$, then we write $\pi=\pi^{(1)}\oplus\st(\pi^{(2)})$. Similarly, if $\pi=\pi^{(1)}\pi^{(2)}$ with $\pi^{(1)}>\pi^{(2)}$, then we write $\pi=\st(\pi^{(1)})\ominus\pi^{(2)}$. For instance, we have $12354=123\oplus 21$ and $34521=123\ominus 21$.
\begin{claim}\label{claim}
Let $\pi$ be a normal, alternating, and $(2413,3142)$-avoiding permutation of odd length larger than $1$, then there exists a unique pair of permutations $(\pi^{(1)},\pi^{(2)})$, such that
\begin{enumerate}
    \item $\pi=\pi^{(1)}\oplus\pi^{(2)}$,
    \item either $\pi^{(1)}=1$ or $\pi^{(1)}$ is of odd length and non-normal, alternating and $(2413,3142)$-avoiding,
    \item $\pi^{(2)}$ is of even length ($\ge 2$) and $(2413,3142)$-avoiding, its reverse is alternating.
\end{enumerate}
\end{claim}
In view of the claim above, $\frac{1}{2}R(x)\cdot T(x)$ accounts for the cases when $\pi^{(1)}$ is of length $3$ or longer, while $T(x)$ corresponds to the case when $\pi^{(1)}=1$. Now since the above decomposition using $\oplus$ is unique, we get \eqref{sep-even-gf}.

Applying \eqref{sep-even-lag}, we can rewrite \eqref{sep-even-gf} as
\begin{align*}
T=\frac{R}{R+2}=x(R+1)^2.
\end{align*}
This form is suitable for the more general Lagrange-B\"urmann formula (cf. \cite{FSe}), and we get for $n\ge 2$,
\begin{align*}
t_n&=[x^{n-1}](R+1)^2=\frac{1}{n-1}[R^{n-2}]\left(2(R+1)(R+1)^{2n-2}(R+2)^{n-1}\right)\\
&=\frac{2}{n-1}\sum_{i=0}^{n-2}2^{n-1-i}\binom{n-1}{i}\binom{2n-1}{n-2-i}\\
&=\frac{4}{n-1}\sum_{i=0}^{n-2}2^i\binom{n-1}{i+1}\binom{2n-1}{i}.
\end{align*}
The proof is now completed.
\end{proof}

\begin{proof}[Proof of Claim \ref{claim}]
A permutation avoids both $2413$ and $3142$ if and only if it is \emph{separable} (cf. \cite[page 57]{Kit}), which means it can be decomposed as either $\pi=\pi^{(1)}\oplus\pi^{(2)}$ or $\pi=\pi^{(1)}\ominus\pi^{(2)}$. Now $\pi$ being normal excludes the case of $\pi=\pi^{(1)}\ominus\pi^{(2)}$. Such $\oplus$-decomposition may not be unique. To make it unique as claimed, we always take the decomposition where $\pi^{(1)}$ is shortest in length. This also means $\pi^{(1)}$ itself cannot be $\oplus$-decomposed further. Therefore $\pi^{(1)}=1$ or $\pi^{(1)}$ can be $\ominus$-decomposed and thus non-normal. Being subwords of $\pi$, $\pi^{(1)}$ and $\pi^{(2)}$ should avoid $2413$ and $3142$ as well. The remaining restrictions on $\pi^{(1)}$ and $\pi^{(2)}$ can be verified easily.
\end{proof}

\begin{remark}
In view of the similarity in the expressions for $r_n$ and $t_n$, we can unify them as the following formula:
\begin{align*}
|\A_n(2413,3142)|=\frac{2^{n-2m}}{m}\sum_{i=0}^{m-1}2^i\binom{m}{i+1}\binom{n-1}{i}, \text{ where }m=\left\lfloor\frac{n-1}{2}\right\rfloor, \text{ and }n\ge 3.
\end{align*}
Moreover, the two sequences $\{r_n\}_{n\ge 0}$ and $\{t_n\}_{n\ge 1}$ have been cataloged in the OEIS (see \href{https://oeis.org/A027307}{\tt{oeis:A027307}} and \href{https://oeis.org/A032349}{\tt{oeis:A032349}}), and were considered, for instance, by Deutsch et al. \cite{DC} as enumerating certain type of lattice paths. Then a natural question would be to find a bijection between these two combinatorial models.
\end{remark}

Now we turn to the $(-1)$-evaluation for $S_n(t)$, which is a direct result of \eqref{gamma:sep} and \eqref{alt-4=gamma}. 

\begin{theorem}\label{sep}
For any $n\ge 1$, there holds
\begin{align}
\label{alt:sep-odd}
S_{n}(-1)=\sum_{\pi\in\S_n(2413,3142)}(-1)^{\des\:\pi}&=\begin{cases}0 & \emph{if $n$ is even},\\
(-1)^{\frac{n-1}{2}}r_{\frac{n-1}{2}} & \emph{if $n$ is odd}.
\end{cases}
\end{align}
\end{theorem}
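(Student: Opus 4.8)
The plan is to evaluate the $\gamma$-expansion \eqref{gamma:sep} at $t=-1$ and exploit the vanishing of the factor $(1+t)^{n-1-2k}$. Substituting $t=-1$ into
\[
S_n(t)=\sum_{k=0}^{\lfloor\frac{n-1}{2}\rfloor}\gamma_{n,k}^S\,t^k(1+t)^{n-1-2k}
\]
replaces each factor $(1+t)^{n-1-2k}$ by its value at $t=-1$, which is $0$ whenever the exponent $n-1-2k$ is strictly positive and equals the empty product $1$ exactly when $n-1-2k=0$. Hence the only index that can possibly contribute is $k=\tfrac{n-1}{2}$.

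Next I would split into two cases according to the parity of $n$. When $n$ is even, $n-1$ is odd, so $n-1-2k$ is odd for every integer $k$ and in particular never zero; every summand therefore carries a vanishing factor, and we conclude $S_n(-1)=0$. When $n$ is odd, the index $k=\tfrac{n-1}{2}$ is a nonnegative integer sitting at the top of the summation range $0\le k\le\lfloor\tfrac{n-1}{2}\rfloor$, and it is the sole surviving term while all lower terms again vanish. This leaves
\[
S_n(-1)=(-1)^{\frac{n-1}{2}}\,\gamma_{n,\frac{n-1}{2}}^S .
\]

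Finally, I would identify the surviving coefficient with $r_{\frac{n-1}{2}}$. By \eqref{alt-4=gamma} we have $\gamma_{n,\lfloor\frac{n-1}{2}\rfloor}^S=|\A_n(2413,3142)|$, and for odd $n$, writing $n=2m+1$ with $m=\tfrac{n-1}{2}$, the definition $r_m:=|\A_{2m+1}(2413,3142)|$ gives $\gamma_{n,\frac{n-1}{2}}^S=r_{\frac{n-1}{2}}$, which yields the stated value. There is essentially no obstacle here: the argument is the standard $(-1)$-substitution for a palindromic polynomial presented in its $\gamma$-basis, so the only point requiring care is the parity bookkeeping, namely checking that the surviving exponent condition $n-1-2k=0$ forces $k=\tfrac{n-1}{2}$ and that this index is admissible (an integer lying in range) precisely when $n$ is odd. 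Once \eqref{gamma:sep} and \eqref{alt-4=gamma} are granted, the remainder is immediate.
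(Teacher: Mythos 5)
Your proposal is correct and is exactly the paper's argument: the paper states Theorem~\ref{sep} as a direct consequence of the $\gamma$-expansion \eqref{gamma:sep} and the identification \eqref{alt-4=gamma}, which is precisely your $(-1)$-substitution killing all terms with $n-1-2k>0$ and leaving $(-1)^{\frac{n-1}{2}}\gamma_{n,\frac{n-1}{2}}^S=(-1)^{\frac{n-1}{2}}r_{\frac{n-1}{2}}$ when $n$ is odd. Your write-up simply makes explicit the parity bookkeeping that the paper leaves to the reader.
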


\subsection{The case of (1342,2431)--avoiding alternating permutations.}
\begin{theorem}\label{case1342-2431}
Let $u_{n}:=|\A_{2n+1}(1342,2431)|$ and  $U(x):=\sum\limits_{n=0}^{\infty}u_{n}x^n$, then
\begin{align}\label{1342-odd-gf}
U(x)&=\frac{\sqrt{1-4x}}{\sqrt{1-4x}-2x}=\cfrac{1}{1-\cfrac{2x}{1-\cfrac{2x}{1-\cfrac{x}{1-\cfrac{x}{\ddots}}}}}.
\end{align}
\end{theorem}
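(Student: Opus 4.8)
The plan is to recognize $u_n$ as a top (diagonal) $\gamma$-coefficient and then extract a diagonal generating function from the algebraic equation \eqref{ae-1342}, in exactly the spirit of the proof of Theorem~\ref{thm:alt-2413-odd}. First I would record the two inputs. By \eqref{alt-4=gamma} we have $u_n=\gamma^Y_{2n+1,n}$, so $U(x)=\sum_{n\ge0}\gamma^Y_{2n+1,n}x^n$ is precisely the diagonal of $\Gamma_Y(x,z)$ along $(2n+1,n)$. I also need the corresponding diagonal of $\Gamma_N$: since $\gamma^N_{n,k}=|\widetilde{\S}_{n,k}(231)|$ by \eqref{eq:231:132} at $q=1$, the top coefficient $\gamma^N_{2n+1,n}$ counts $231$-avoiding alternating permutations of length $2n+1$, which by \eqref{alt-231=Cn} equals $C_n$. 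Hence $\sum_{n\ge0}\gamma^N_{2n+1,n}x^n=\mathcal C(x)$, the ordinary Catalan generating function, where $\mathcal C(x)=\tfrac{1-\sqrt{1-4x}}{2x}$ satisfies $x\mathcal C(x)^2=\mathcal C(x)-1$ and $1-2x\mathcal C(x)=\sqrt{1-4x}$.

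The heart of the argument is to apply $[x^n][z^{2n+1}]$ to both sides of \eqref{ae-1342}. The parity bookkeeping is identical to that in Theorem~\ref{thm:alt-2413-odd}: in a product such as $\Gamma_N^2\Gamma_Y$ a monomial of $z$-degree $2n+1$ carries $x$-degree at most $n-1$, with equality forced exactly when each factor contributes an \emph{odd} length with its maximal $\gamma$-index. Consequently only the diagonal coefficients survive, and one obtains
\begin{align*}
[x^{n-1}][z^{2n}]\,\Gamma_N\Gamma_Y=\sum_{p+q=n-1}C_p u_q, &&
[x^{n-1}][z^{2n+1}]\,\Gamma_N^2\Gamma_Y=\sum_{p_1+p_2+q=n-1}C_{p_1}C_{p_2}u_q,
\end{align*}
while $[x^{n-1}][z^{2n}]\Gamma_N^2=\sum_{p+q=n-1}C_pC_q$, the term $z\Gamma_Y$ contributes $\gamma^Y_{2n,n}=0$, and the lone $z$ accounts for $u_0=1$. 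Translating into generating functions, the equation \eqref{ae-1342} collapses to $U=1+2x\mathcal C\,U+x\mathcal C^2U-x\mathcal C^2$, that is,
\begin{align*}
U(x)=\frac{1-x\mathcal C(x)^2}{1-2x\mathcal C(x)-x\mathcal C(x)^2}.
\end{align*}

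Finally I would simplify. Using $x\mathcal C^2=\mathcal C-1$ the numerator becomes $2-\mathcal C$, and using $1-2x\mathcal C=\sqrt{1-4x}$ the denominator becomes $\sqrt{1-4x}+1-\mathcal C$; substituting $\mathcal C=\tfrac{1-\sqrt{1-4x}}{2x}$ and clearing denominators yields the closed form $U(x)=\sqrt{1-4x}\big/(\sqrt{1-4x}-2x)$. The continued fraction then requires no extra work: the all-$x$ tail $\cfrac{1}{1-\cfrac{x}{1-\cfrac{x}{\ddots}}}$ equals $\mathcal C(x)$ (since $\mathcal C=1/(1-x\mathcal C)$), so the next convergent is $1/(1-2x\mathcal C)=1/\sqrt{1-4x}$, and the top one is $1\big/\bigl(1-2x/\sqrt{1-4x}\bigr)=\sqrt{1-4x}\big/(\sqrt{1-4x}-2x)=U(x)$.

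The main obstacle is the diagonal extraction in the second step: one must argue carefully that all off-diagonal contributions vanish (the degree/maximality estimate forcing every factor to have odd length with maximal $\gamma$-index), and one must correctly identify the Narayana diagonal $\sum_n\gamma^N_{2n+1,n}x^n$ with the Catalan series $\mathcal C(x)$. Once these two points are secured, the passage to the functional equation, its solution, and the unfolding of the continued fraction are all routine algebra.
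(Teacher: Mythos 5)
Your proposal is correct and follows essentially the same route as the paper: extract $[x^n][z^{2n+1}]$ from \eqref{ae-1342}, use the parity argument from the proof of Theorem~\ref{thm:alt-2413-odd} to see that only the diagonal $\gamma$-coefficients survive (with $\gamma^N_{2n+1,n}=C_n$), and solve the resulting functional equation --- indeed your equation $U=1+2x\mathcal{C}U+x\mathcal{C}^2(U-1)$ is algebraically identical to the paper's $2(U-1)=2xUC+(U-1)C$ after using $x\mathcal{C}^2=\mathcal{C}-1$. Your explicit unfolding of the continued fraction via the Catalan tail $\mathcal{C}=1/(1-x\mathcal{C})$ is a small addition the paper leaves implicit, but it is routine and does not change the argument.
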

\begin{proof}
We only sketch the proof since it is quite analogous to that of Theorem~\ref{thm:alt-2413-odd}. We use \eqref{ae-1342} in a similar way as we use \eqref{ae-sep} in the proof of \eqref{sep-odd-gf}, i.e., we extract the coefficients of $z^{2n+1}$ from both sides and then compare the coefficients of $x^{n}$. This leads to the following recurrence relation that involves the Catalan number $C_n$, since we have already shown that $\gamma_{2n+1,n}^N=|\A_{2n+1}(231)|=C_n$. For $n\ge 1$, we have:
\begin{align*}
u_n&=2\sum_{m=0}^{n-1}u_mC_{n-1-m}+\sum_{m=1}^{n-1}u_m\sum_{l=0}^{n-m-1}C_lC_{n-m-l-1}\\
&=2\sum_{m=0}^{n-1}u_mC_{n-1-m}+\sum_{m=1}^{n-1}u_mC_{n-m}.
\end{align*}
In terms of generating functions, this means
\begin{align*}
2(U(x)-1)=2xU(x)C(x)+(U(x)-1)C(x),
\end{align*}
where
\begin{align*}
C(x)=\frac{1-\sqrt{1-4x}}{2x}
\end{align*}
is the generating function for the Catalan numbers. We plug in $C(x)$ and solve for $U(x)$ to finish the proof.
\end{proof}

\begin{remark}
Interestingly, our result above seems to be the first combinatorial interpretation for $u_n$, and the sequence $\{u_n\}_{n\ge 0}$ is also on OEIS (see \href{https://oeis.org/A084868}{\tt{oeis:A084868}}).
Although a single sum formula for  $u_n$ can be derived from  \eqref{1342-odd-gf}
by using standard method, we prefer to give  a multiple sum formula  as follows:
\begin{align*}
\sum_{n=0}^{\infty}u_n x^n&=\dfrac{1}{1-\dfrac{2x}{\sqrt{1-4x}}}
=\sum_{m=0}^{\infty} \left(\frac{2x}{\sqrt{1-4x}}\right)^m\\
&=\sum_{m=0}^{\infty} \left(\sum_{k=0}^{\infty} 2\binom{2k}{k} x^{k+1}\right)^m.
\end{align*}
Thus we obtain, for $n\geq 1$,
\begin{equation}\label{sum formula}
u_n=\sum_{m=1}^n 2^m \sum_{k_1+\cdots +k_m=n-m}\prod_{i=1}^m \binom{2k_i}{k_i}.
\end{equation}
The above formula shows that  $u_{n}$ is a multiple of 4 when $n\geq 2$.
\end{remark}

With the aid of \eqref{gamma:1342} and \eqref{alt-4=gamma}, we obtain
\begin{theorem}
For any $n\ge 1$, there holds
\begin{align}
\label{alt:1342}
Y_{n}(-1)=\sum_{\pi\in\S_n(1342,2413)}(-1)^{\des\:\pi}&=\begin{cases}0 & \emph{if $n$ is even},\\
(-1)^{\frac{n-1}{2}}u_{\frac{n-1}{2}} & \emph{if $n$ is odd}.
\end{cases}
\end{align}
\end{theorem}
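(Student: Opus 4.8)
The plan is to substitute $t=-1$ directly into the $\gamma$-expansion \eqref{gamma:1342} and exploit the factor $(1+t)^{n-1-2k}$, exactly in the spirit of Theorem~\ref{sep}. Since $Y_n(t)=\sum_{k=0}^{\lfloor\frac{n-1}{2}\rfloor}\gamma_{n,k}^Y t^k(1+t)^{n-1-2k}$, setting $t=-1$ gives
\begin{align*}
Y_n(-1)=\sum_{k=0}^{\lfloor\frac{n-1}{2}\rfloor}\gamma_{n,k}^Y(-1)^k(1+(-1))^{n-1-2k}.
\end{align*}
The crucial observation is that the polynomial $(1+t)^{n-1-2k}$ evaluates to $0$ at $t=-1$ whenever $n-1-2k>0$, and equals the empty product $1$ when $n-1-2k=0$. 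Thus only the terms with $n-1-2k=0$, i.e.\ $k=\frac{n-1}{2}$, can survive.

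First I would dispose of the even case. If $n$ is even, then $n-1$ is odd, while $2k$ is even, so $n-1-2k\ge 1$ for every admissible index $0\le k\le\lfloor\frac{n-1}{2}\rfloor=\frac{n-2}{2}$. Hence each factor vanishes and the sum collapses to $Y_n(-1)=0$, as claimed. If instead $n$ is odd, the exponent $n-1-2k$ vanishes precisely for the admissible index $k=\frac{n-1}{2}=\lfloor\frac{n-1}{2}\rfloor$, and all other terms die as above. This leaves a single surviving summand,
\begin{align*}
Y_n(-1)=(-1)^{\frac{n-1}{2}}\gamma_{n,\frac{n-1}{2}}^Y.
\end{align*}

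The final step is to identify this top $\gamma$-coefficient through \eqref{alt-4=gamma}. Writing $n=2m+1$, we have $\gamma_{n,\frac{n-1}{2}}^Y=\gamma_{2m+1,m}^Y=|\A_{2m+1}(1342,2431)|=u_m=u_{\frac{n-1}{2}}$, where the last equality is the definition of $u_n$ from Theorem~\ref{case1342-2431}. Substituting back yields $Y_n(-1)=(-1)^{\frac{n-1}{2}}u_{\frac{n-1}{2}}$, completing the proof. I expect no genuine obstacle here: the argument is a mechanical instance of the $(-1)$-phenomenon supplied by a $\gamma$-nonnegative expansion, in which evaluating at $t=-1$ annihilates every term except the topmost one. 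The only care needed is the parity bookkeeping in the exponent $n-1-2k$ and reading the top coefficient correctly via \eqref{alt-4=gamma}, both of which are routine. (The statement writes $\S_n(1342,2413)$, whereas $Y_n$ and \eqref{gamma:1342} are defined on $\S_n(1342,2431)$; this appears to be a typographical slip and does not affect the computation.)
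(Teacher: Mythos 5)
Your proof is correct and matches the paper's argument exactly: the paper derives \eqref{alt:1342} precisely by evaluating the $\gamma$-expansion \eqref{gamma:1342} at $t=-1$, where only the top coefficient $\gamma^Y_{n,\lfloor(n-1)/2\rfloor}$ survives for odd $n$, and identifying it with $u_{\frac{n-1}{2}}=|\A_n(1342,2431)|$ via \eqref{alt-4=gamma}. You are also right that $\S_n(1342,2413)$ in the statement is a typographical slip for $\S_n(1342,2431)$.
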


We end this section by noting that both $\S_n(2413,3142)$ and $\S_n(1342,2431)$ exhibit $(-1)$-phenomenon only for the entire set of permutations, but not for the subset of coderangements. This should not come as a surprise in view of the reversal relations between the two patterns that we avoid, namely $(2413)^r=3142, (1342)^r=2431$, and the fact that the definition of coderangements is incompatible with the reverse map. Other subsets of $\S_n$ instead of $\D^*_n$ should be examined to hunt for the other half of the $(-1)$-phenomenon.

\section{Final remarks}
It would be interesting to give direct combinatorial proofs of the $(-1)$-phenomena of this paper. The expansions we have in Theorems~\ref{thm:des-ai-qgamma} and \ref{thm:des-ai-qgamma-new}, Lemma~\ref{lem:132-fl} are all natural, in the sense that the statistics (powers of $q$) appear in the $\gamma$-coefficients on the expansion side, are the same as those appear on the left-hand side, the avoiding patterns are also the same. And we prove them uniformly using the MFS-action and its variation. Each interpretation listed in Theorem~\ref{qnara} (resp. Theorem~\ref{HZthwex}) should have a $q$-$\gamma$-expansion in theory. Namely, once we have an expansion for one of them, the others all share this expansion. But expansions derived this way are unnatural (for instance, \eqref{wex-inv-gamma} is unnatural).  So now the question is, do the other ones that we are missing in Theorem~\ref{thm:des-ai-qgamma} (to be precise, \#7---\#10 in Tabel~\ref{ten}) have natural expansions? It seems the MFS-action cannot help anymore.

It would be appealing to establish a multivariate generating function (in the spirit of Shin-Zeng's Lemma~\ref{Lemma:Shin-Zeng}) that specializes to the $(2413, 3142)$-avoiding permutations or $(1342, 2413)$-avoiding permutations, and consequently giving us $q$-analogues of \eqref{alt:sep-odd} or \eqref{alt:1342}.


Another direction to extend the results presented here is to place $\S_n$ in the broader context of Coxeter groups, and consider the so-called  Narayana polynomials of types B and D (see \cite[Theorems 2.32 and 2.33]{Ath}). This approach was shown fruitful for permutations in a recent work of Eu et al.\cite{EFHL}.

Finally, in a different context,
 some $(-1)$-phenomenon have been generalized to the deeper \emph{cyclic sieving phenomenon} (CSP), see \cite{RSW, Sag}.
It would be interesting to see whether there are any
CSP-analogue for our $(-1)$-phenomenon.

\section*{Acknowledgement}
We are indebted to the referees for their professionalness and meticulosity in reviewing our manuscript. Their suggestions have optimized the exposition of this paper and enhanced its readability.

The first and second authors were supported by the National Science Foundation of China (No.~11501061) and the Fundamental Research Funds for the Central Universities (No.~2018CDXYST0024). The third author was supported by the China Scholarship Council.


\begin{thebibliography}{99}


\bibitem{Aig08} M. Aigner, Enumeration via ballot numbers. \textit{Discrete Math.} {\bf 308} (2008), 2544--2563.

\bibitem{Ath} C.~A.~Athanasiadis, Gamma-positivity in combinatorics and geometry, arXiv preprint (2017). (\href{http://arxiv.org/abs/1711.05983}{arXiv:1711.05983}).

\bibitem{BS} E.~Babson, E.~Steingr\'imsson, E., Generalized permutation patterns and a classification of the Mahonian statistics, \textit{S\'em. Lothar. Combin.} {\bf B44b} (2000): 18~pp.


\bibitem{BP14} S.~A.~Blanco, T.~K.~Petersen, Counting Dyck paths by area and rank, \textit{Ann. Comb.} {\bf 18} (2) (2014): 171--197.

\bibitem{Bon} M.~B\'ona, On a family of conjectures of Joel Lewis on alternating permutations, \textit{Graphs Combin.} {\bf 30} (3) (2014): 521--526.

\bibitem{BCS08} P.~Br\"{a}nd\'{e}n, A. Claesson, E. Steingr\'{i}msson,
Catalan continued fractions and increasing subsequences in permutations, \textit{Discrete Math.} {\bf 258} (2002), 275--287.

\bibitem{Bra08} P.~Br\"{a}nd\'{e}n, Actions on permutations and unimodality of descent polynomials. \textit{European J. Combin.} {\bf 29}(2008): 514--531.

\bibitem{CR64}
L. Carlitz, J.  Riordan, Two element lattice permutation numbers and their $q$-generalization. \textit{Duke Math. J.} {\bf 31} 1964 371--388.

\bibitem{CZ15}F. Chapoton and J. Zeng, A curious polynomial interpolation of Carlitz-Riodan's $q$-Ballot numbers, \textit{Contrib. Discret Math.} {\bf 10} (2015): 99--122.

\bibitem{CCZ} J.~N.~Chen, W.~Y.~C.~Chen, R.~D.~P.~Zhou, On pattern avoiding alternating permutations, \textit{European J. Combin.} {\bf 40} (2014): 11--25.

\bibitem{CEKS} S-E.~Cheng, S.~Elizalde, A.~Kasraoui, B.~E.~Sagan, Inversion polynomials for 321-avoiding permutations, \textit{Discrete Math.} {\bf 313} (22) (2013): 2552--2565.

\bibitem{Cla} A.~Claesson, Generalized pattern avoidance, \textit{European J. Combin.} {\bf 22} (7) (2001): 961--971

\bibitem{CSZ} R.~J.~Clarke, E.~Steingr\'{i}msson, J.~Zeng, New Euler-Mahonian statistics on permutations and words, \textit{Adv. Appl. Math.} {\bf 18} (3) (1997): 237--270.

\bibitem{DC} E.~Deutsch, D.~Callan, M. Beck, D. Beckwith, W. Bohm, R. F. McCoart and GCHQ Problems Group, Another type of lattice path, \textit{Amer. Math. Monthly} {\bf 107} (4) (2000), Problem 10658: 368--370.

\bibitem{Eli} S.~Elizalde, Fixed points and excedances in restricted permutations, Proceedings of FPSAC Link\"oping University, Sweden, 2003.


\bibitem{EFHL} S.-P.~Eu, T.-S.~Fu, H.-C.~Hsu, H.-C.~Liao, Signed countings of types B and D permutations and $t,q$-Euler numbers, \textit{Adv. Appl. Math.} {\bf 97} (2018): 1--26.
%


\bibitem{Fl80} P.~Flajolet, Combinatorial aspects of continued fractions. \textit{Discrete Math.} {\bf 32} (1980), no. 2, 125--161.

\bibitem{FSe} P.~Flajolet, R.~Sedgewick, \textit{Analytic Combinatorics}, Cambridge University Press, 2009.
%
%

\bibitem{FSt} D.~Foata, V.~Strehl, Rearrangements of the symmetric group and enumerative properties of the tangent and secant numbers, \textit{Math Z.} {\bf137} (3) (1974): 257--264.

\bibitem{FZ} D.~Foata, D.~Zeilberger, Denert's permutation statistic is indeed Euler-Mahonian, \textit{Stud. Appl. Math.} {\bf83} (1) (1990): 31--59.

\bibitem{FV} J.~Fran\c con, G.~Viennot, Permutations selon leurs pics, creux, doubles mont\'ees et double descentes, nombres d'Euler et nombres de Genocchi, \textit{Discrete Math.} {\bf28} (1) (1979): 21--35.

\bibitem{FLZ} S.~Fu, Z.~Lin, J.~Zeng, On two unimodal descent polynomials, \textit{Discrete Math.} {\bf 341} (9) (2018): 2616--2626.
%
%
\bibitem{JV} M. Josuat-Verg\`{e}s, A $q$-enumeration of alternating permutations, \textit{European J. Combin.} {\bf 31} (2010): 1892--1906.

\bibitem{Kit} S.~Kitaev, \textit{Patterns in Permutations and Words}, Springer Science $\&$ Business Media, 2011.

\bibitem{Lew1} J.~B.~Lewis, Alternating, pattern-avoiding permutations, \textit{Electron. J. Combin.} {\bf 16} (2009), Note 1.7, 8pp (electronic).

\bibitem{Lew2} J.~B.~Lewis, Pattern avoidance for alternating permutations and Young tableaux, \textit{J. Combin. Theory Ser. A} {\bf 118} (4) (2011): 1436--1450.

\bibitem{Lew3} J.~B.~Lewis, Generating trees and pattern avoidance in alternating permutations, \textit{Electron. J. Combin.} {\bf 19} (2012), Research paper 1.21, 21pp (electronic).

\bibitem{Lew} J.~B.~Lewis, Pattern avoidance for alternating permutations and reading words of tableaux (Doctoral dissertation, Massachusetts Institute of Technology), 2012.

\bibitem{Lin17}Z.~Lin, On $\gamma$-positive polynomials arising in pattern avoidance, \textit{Adv. Appl. Math.} {\bf 82} (2017): 1--22.

\bibitem{LF17} Z.~Lin, S.~Fu, On 1212-avoiding restricted growth functions, \textit{Electron. J. Combin.} {\bf24} (2017), Research Paper 1.53, 20pp (electronic).

\bibitem{LZ15} Z.~Lin, J.~Zeng, The $\gamma$-positivity of basic Eulerian polynomials via group actions, \textit{J. Combin. Theory Ser. A} {\bf 135} (2015): 112--129.

\bibitem{Man} T.~Mansour, Restricted $132$-alternating permutations and Chebyshev polynomials, \textit{Ann. Comb.} {\bf 7} (2003): 201--227.

\bibitem{MW} Z.~Mei, S.~Wang, Pattern avoidance and Young tableaux, \textit{Electron. J. Combin.} {\bf 24} (2017), Research Paper 1.6, 10pp (electronic).

\bibitem{Pet}T.~K.~Petersen,  \textit{Eulerian Numbers}. With a foreword by Richard Stanley. Birkh\"auser Advanced Texts: Basler Lehrb\"ucher. Birkh\"auser/Springer, New York, 2015.

\bibitem{RSW}V.~Reiner, D.~Stanton, D.~White, The cyclic sieving phenomenon, \textit{J. Combin. Theory Ser. A} {\bf 108}.1 (2004): 17--50.

%

\bibitem{Sag} B.~E.~Sagan, The cyclic sieving phenomenon: a survey, arXiv preprint (2010). (\href{http://arxiv.org/abs/1008.0790}{arXiv:1008.0790}).

\bibitem{SW} J.~Shareshian, M.~L.~Wachs, Eulerian quasisymmetric functions, \textit{Adv. Math.} {\bf 225} (2011): 2921--2966.

\bibitem{SZ10} H.~Shin, J.~Zeng, The $q$-tangent and $q$-secant numbers via continued fractions, \textit{European J. Combin.} {\bf 31} (7) (2010): 1689--1705.

\bibitem{SZ12} H.~Shin, J.~Zeng, The symmetric and unimodal expansion of Eulerian polynomials via continued fractions, \textit{European J. Combin.} {\bf 33} (2) (2012): 111--127.

\bibitem{Sta1} R.~P.~Stanley, A survey of alternating permutations, \textit{Contemp. Math.} 531 (2010): 165--196.

\bibitem{Sta2} R.~P.~Stanley, \textit{Catalan Numbers}. Cambridge University Press, 2015.

\bibitem{XY} Y.~Xu, S.~H.~F.~Yan, Alternating permutations with restrictions and standard Young tableaux, \textit{Electron. J. Combin.} {\bf 19} (2012), Research Paper 2.49, 16pp (electronic).

\bibitem{Yan} S.~H.~F.~Yan, On Wilf equivalence for alternating permutations, \textit{Electron. J. Combin.} {\bf 20} (2013), Research Paper 3.58, 19pp (electronic).


\end{thebibliography}
\end{document}